\documentclass[12pt,a4paper,leqno]{article}

\usepackage[centertags]{amsmath}
\usepackage{amsthm}
\usepackage{amssymb,exscale}
\usepackage{paralist}
\usepackage{latexsym}
\usepackage{graphicx}
\usepackage[perpage,multiple]{footmisc}
\usepackage{hyperref}

\swapnumbers
\theoremstyle{definition}
\newtheorem{theorem}[equation]{Theorem}
\newtheorem{lemma}[equation]{Lemma}
\newtheorem{corollary}[equation]{Corollary}
\newtheorem{definition}[equation]{Definition}
\newtheorem{proposition}[equation]{Proposition}
\newtheorem{remark}[equation]{Remark}

\newenvironment{customprop}[1]
  {\innercustomprop}
  {\endinnercustomprop}

\newcommand\fnsep{\textsuperscript{,}}

\renewcommand{\phi}{\varphi}

\newcommand{\D}{\mathrm{d}}
\newcommand{\E}{\mathrm{e}}

\newcommand{\ti}{\tilde}

\renewcommand{\(}{\bigl(}
\renewcommand{\)}{\bigr)\vphantom{)}}

\newcommand{\equi}{\>\Longleftrightarrow\>}

\newcommand{\impl}{\>\Longrightarrow\>}

\newcommand{\vol}{\operatorname{vol}}

\newcommand{\Spl}{\operatorname{Spl}}
\newcommand{\BB}{\operatorname{BB}}
\newcommand{\G}{\operatorname{G}}

\newcommand{\One}{{1\hskip-2.5pt{\rm l}}}

\newcommand{\ccirc}{\mathbin{\kern-0.13em\circ}}

\newcommand{\eps}{\varepsilon}

\newcommand{\si}{\sigma}
\newcommand{\ga}{\gamma}

\newcommand{\Om}{\Omega}
\newcommand{\de}{\delta}
\newcommand{\De}{\Delta}
\newcommand{\al}{\alpha}
\newcommand{\be}{\beta}
\newcommand{\cO}{\mathcal O}

\newcommand{\A}{\mathcal A}
\newcommand{\B}{\mathcal B}
\newcommand{\la}{\lambda}

\newcommand{\Ex}{\mathbb E\,}
\newcommand{\R}{\mathbb R}

\newcommand{\PR}[1]{\mathbb{P}\mskip1.5mu\bigg(\mskip1.5mu#1\mskip1.5mu\bigg)}

\newcommand{\sif}{$\sigma$\nobreakdash-\hspace{0pt}field}

\newcommand{\splittable}[1]{$#1$\nobreakdash-\hspace{0pt}splittable}
\newcommand{\splittability}[1]{$#1$\nobreakdash-\hspace{0pt}splittability}

\newcommand{\myatop}[2]{{\genfrac{}{}{0pt}{}{#1}{#2}}}

\begin{document}

\title{Linear response and moderate deviations:\\ hierarchical
  approach. IV}

\author{Boris Tsirelson}

\date{}
\maketitle

\begin{abstract}
The Moderate Deviations Principle (MDP) is well-understood for sums of
independent random variables, worse understood for stationary random
sequences, and scantily understood for random fields.
Here it is established for splittable random fields integrated against
test functions.
\end{abstract}

\setcounter{tocdepth}{2}
\tableofcontents

\numberwithin{equation}{section}

\section[Definition, and main result formulated]
  {\raggedright Definition, and main result formulated}
\label{sect1}
The definition of a splittable random field, used in \cite{II}
and \cite{III}, is geared toward integrals $ \int_B X_t \, \D t $ of
the random field $ (X_t)_{t\in\R^d} $ over boxes $ B \subset \R^d $
rather than more general integrals $ \int \phi(t)  X_t \, \D t $ with
a test function $ \phi : \R^d \to \R $. In order to examine integrals
with test functions, here we introduce a new definition of a
splittable random field.

We still do not need random variables $ X_t $; instead, by a random
field on $ \R^d $ we mean a random locally finite signed Borel measure
(LFSBM) on $ \R^d $, denoted (if only for convenience) by $ B \mapsto
\int_B X_t \, \D t $ where $B$ runs over bounded Borel measurable
subsets of $\R^d$. The set of all such measures (LFSBMs) is a standard
measurable space; its \sif\ is generated by the functions
$ \mu \mapsto \int f \, \D\mu $ where $ f $ runs over all compactly
supported continuous functions $ \R^d \to \R $ (or, equivalently, all
bounded Borel functions with bounded support; or only indicator
functions of boxes). Accordingly, a random LFSBM is a measurable map
from a given probability space to the space of LFSBMs; for convenience
we say ``random field'' instead of ``random LFSBM''.\footnote{%
 Thus, the centered Poisson point process is still an example, but the
 white noise is not. (Both are mentioned in \cite[page 2]{I} for $ d=1
 $.)}
Sometimes we specify $ d $ or/and $ \Om $, saying ``random field on
$ \R^d $'', or ``random field on  $ \R^d $ and $ \Om $'', or ``random
field on $ \Om $''. The notions ``independent'' and ``identically
distributed'' for such random fields  are interpreted naturally. Note
that independence makes sense only for random fields on the same $ \Om
$. In contrast, identically distributed random fields may live on
different $ \Om $ (but the same $ \R^d $).

\begin{definition}\label{1.1}
A \emph{split} of a random field $X$ (on $ \R^d $ and $ \Om $) is a
triple of random fields $ X^0, X^-, X^+ $ (all the three on $ \R^d $
and the same $ \ti\Om $ possibly different from $ \Om $) such that
the two random fields $ X^-, X^+ $ are (mutually) independent and the
four random fields $ X, X^0, X^-, X^+ $ are identically distributed.
\end{definition}

Informally, a split is useful when its leak, defined below, is
small.\footnote{%
 The same is written in \cite[p.~2]{II}; see Sect.~\ref{sect5} below
 (in particular, Prop.~\ref{5.2} and Lemma \ref{5.3}) for relations
 between the notions ``split'' and ``leak'' here and in \cite{II}.}

\begin{definition}\label{1.2}
(a) Let $X$ be a random field on $\R^d$ and $\Om$, and $ (X^0, X^-,
X^+) $ a split of $X$ on $\ti\Om$. The \emph{leak} of this split is
the random field $Y$ on $\R^d$ and $\ti\Om$ such that\footnote{%
 It is convenient to formulate relations in terms of (generally
 nonexistent) random variables $X_t$ when it is immediate to rewrite
 these relations in terms of LFSBMs.}\fnsep\footnote{%
 Typically, $ Y_{t_1,\dots,t_d} $ decays for large
 $|t_1|$.}\fnsep\footnote{%
 In \cite[p.~2]{II} the leak is taken along a coordinate hyperplane
 $ \{(t_1,\dots,t_d): t_k=r\} $; in contrast, here we restrict
 ourselves to $k=1$ and $r=0$, which simplifies Def.~\ref{1.2} but
 complicates Def.~\ref{1.3}(c).}
\[
Y_{t_1,\dots,t_d} = \begin{cases}
  X^-_{t_1,\dots,t_d} - X^0_{t_1,\dots,t_d} &\text{when } t_1<0,\\
  X^+_{t_1,\dots,t_d} - X^0_{t_1,\dots,t_d} &\text{when } t_1\ge0;
\end{cases}
\]
that is, $ \int_A Y_t \, \D t = \int_A (X^-_t-X^0_t) \, \D t $ for
every bounded Borel measurable set $ A \subset
(-\infty,0)\times\R^{d-1} $, and $ \int_A Y_t \, \D t = \int_A
(X^+_t-X^0_t) \, \D t $ for every bounded Borel measurable set $
A \subset [0,\infty)\times\R^{d-1} $.

(b) Let $X$ be a random field on $\R^d$ and $\Om$. A random field $Y$
on $ \R^d $ and $\Om_1$ is a \emph{leak} for $X$, if there exists a
split of $X$ on some $\ti\Om$ such that the leak of this split is
distributed like $Y$.\footnote{%
 That is, $Y$ and this leak are identically distributed.}
\end{definition}

Ultimately we are interested in stationary (that is, shift-invariant
in distribution) random fields, but for now we waive stationary,
getting in exchange a useful property, see Item 1 in the numbered list
below.

The variation of a LFSBM is a locally finite Borel measure (positive,
not signed). Thus, the variance of a random field is a random locally
finite Borel measure; we denote it $ B \mapsto \int_B |X_t| \, \D t $.

We'll define the notion ``splittable random field'' satisfying the
following.
\begin{compactenum}
\item If $ (X_t)_{t\in\R^d} $ is splittable and $ \phi : \R^d \to \R $
is bounded Borel measurable, then $ (\phi(t)X_t)_{t\in\R^d} $ is
splittable.\footnote{%
 See Corollary~\ref{2.3}.}
\item If $ (X_{t_1,\dots,t_d})_{t_1,\dots,t_d\in\R} $ is splittable and
$ (k_1,\dots,k_d) $ is a permutation of $ (1,\dots,d) $, then $
(X_{t_{k_1},\dots,t_{k_d}})_{t_1,\dots,t_d\in\R} $ is
splittable (``permutation invariance'').\footnote{%
 See Corollary~\ref{2.6}.}
\item If $ (X_t)_{t\in\R^d} $ is splittable and $ s \in \R^d $, then $
(X_{t+s})_{t\in\R^d} $ is splittable (``shift
 invariance'').\footnote{%
 See Corollary~\ref{2.9}.}
\item If $ (X_t)_{t\in\R^d} $ is splittable and $ A \subset \R^d $
bounded Borel measurable, then $ \Ex \exp \eps \int_A |X_t| \, \D t
< \infty $ for some $ \eps>0 $,\footnote{%
 Follows from Def.~\ref{1.3}(a), since $ \Ex \exp \eps \int_{A\cup B}
 |X_t| \, \D t \le
 \Ex \( \exp \eps \int_A |X_t| \, \D t \)
 \( \exp \eps \int_B |X_t| \, \D t \) \le
 \( \Ex \exp 2\eps \int_A |X_t| \, \D t \)^{1/2}
 \( \Ex \exp 2\eps \int_B |X_t| \, \D t \)^{1/2} $.}
and $ \Ex \int_A X_t \, \D t = 0 $.\footnote{%
 See Def.~\ref{1.3}(b).}
\begin{sloppypar}
\item If $ (X_t)_{t\in\R^d} $ is splittable and compactly supported
(that is, $ \int_{\R^d\setminus B} |X_t| \, \D t = 0 $ a.s.\ for some
bounded Borel measurable $B$), then there exists a splittable $
(Y_s)_{s\in\R^{d-1}} $ such that $ \( \int_{-\infty}^\infty
X_{r,s} \, \D r \)_{s\in\R^{d-1}} $ is distributed like the sum of $
(Y_s)_{s\in\R^{d-1}} $ and two mutually \emph{independent} random
fields, one distributed like $ \( \int_{-\infty}^0 X_{r,s} \, \D
r \)_{s\in\R^{d-1}} $, the other distributed like $ \( \int_0^\infty
X_{r,s} \, \D r \)_{s\in\R^{d-1}} $.\footnote{%
 See Prop.~\ref{2.13}.}
\end{sloppypar}
\item In the case $ d=1 $ the previous item means existence of a
random variable $Y$ such that $ \int_{-\infty}^\infty X_r \, \D r $ is
distributed like the sum of $ Y $ and two mutually \emph{independent}
random variables, one distributed like $ \int_{-\infty}^0 X_r \, \D r
$, the other distributed like $ \int_0^\infty X_r \, \D r $, and
$ \Ex \exp \eps |Y| < \infty $ for some $ \eps>0 $.
\end{compactenum}

\begin{definition}\label{1.3}
Let $ d \in \{1,2,\dots\} $ and $ k \in \{1,\dots,d\} $. A random
field $ (X_t)_{t\in\R^d} $ is \splittable{(1,k)}, if

(a) for every $ s \in \R^k $,\footnote{%
 Of course, $ [0,1)^k+s $ stands for the unit cube shifted by
 $s$. Typically, $ X_{t_1,\dots,t_d} $ decays for large $
 t^2_{k+1}+\dots+t^2_d $.}
\[
\Ex \exp \int_{([0,1)^k+s)\times\R^{d-k}} |X_t| \, \D t \le 2 \, ;
\]

(b) $ \Ex X_t = 0 $ for all $ t \in \R^d $; that is, for every
bounded Borel measurable set $ A \subset \R^d $,
\[
\Ex \int_A X_t \, \D t = 0
\]
(this expectation is well-defined, since (a) implies $ \Ex \int_A
|X_t| \, \D t < \infty $);

(c) for every $ i \in \{1,\dots,k\} $ and every $ r \in \R $ the
random field $ \ti X $ defined by\
\[
\ti X_{t_1,\dots,t_d} = X_{t_2,\dots,t_i,t_1+r,t_{i+1},\dots,t_d}
\]
has a leak $ \ti Y $ such that the random field $ Y $ defined
by\footnote{%
 A leak $\ti Y$ for $\ti X$ (along $\{(t_1,\dots,t_d):t_1=0\}$) could
 be thought of as a leak for $X$ along $\{(t_1,\dots,t_d):t_i=r\}$
 (recall the last footnote to Def.~\ref{1.2}) taken at $
 (t_2,\dots,t_i,t_1+r,t_{i+1},\dots,t_d) $. Note that $ \ti
 Y_{(t_1,\dots,t_d)} $ switches at $ t_1=0 $; accordingly, $ Y $
 switches at $ t_k=0 $. Typically, $ Y_{(t_1,\dots,t_d)} $ decays for
 large $ t_k^2+\dots+t_d^2 $.}
\[
Y_{t_1,\dots,t_d} = \ti Y_{t_k,t_1,\dots,t_{k-1},t_{k+1},\dots,t_d}
\]
is \splittable{(1,k-1)}, provided that $ k>1 $; otherwise, if $ k=1 $,
$ Y $ is required to satisfy
\[
\Ex \exp \int_{\R^d} |Y_t| \, \D t \le 2 \, .
\]
\end{definition}

\begin{definition}\label{1.4}
(a) Let $ C \in (0,\infty) $. A random field $ X = (X_t)_{t\in\R^d} $
is \splittable{(C,k)}, if the random field $ \frac1C X = \(\frac1C
X_t\)_{t\in\R^d} $  is \splittable{(1,k)}.

(b) In the case $ k=d $ we say that $X$ is \splittable{C}.

(c) If $X$ is \splittable{C} for some $ C \in (0,\infty) $, then we
say that $X$ is splittable.
\end{definition}

\begin{theorem}[\emph{``linear response''}]\label{theorem1}
For every splittable stationary random field $X$ on $ \R^d $ there
exists a number (evidently unique) $ \si_X \in [0,\infty) $
such that for every continuous compactly supported function $ \phi
: \R^d \to \R $ holds
\[
\lim_\myatop{ r\to\infty, \la\to0 }{ \la\log^d r \to 0 }
\frac1{ r^d \la^2}
\log \Ex \exp \la \int_{\R^d} \phi \Big( \frac1r t \Big) X_t \, \D t
= \frac12 \|\phi\|^2 \si_X^2
\]
where $ \|\phi\|^2 = \int_{\R^d} \phi^2(t) \, \D t $.
\end{theorem}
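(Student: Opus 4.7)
The plan is three-stage: define $\sigma_X$ from the box case, prove the MDP for step functions via hierarchical splitting, and extend to continuous $\phi$ by uniform approximation.

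First, applied to $\phi = \One_{[0,1]^d}$ the theorem reduces to the MDP for $\int_{[0,r]^d} X_t \, \D t$. Through Sect.~\ref{sect5} (which relates the present definition of splittability to that of \cite{II,III}), the box MDP of those previous papers applies and produces a number $\sigma_X^2 \in [0,\infty)$. Equivalently, under stationarity, $\sigma_X^2 = \lim_{r\to\infty} r^{-d} \Var \int_{[0,r]^d} X_t \, \D t$.

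Next, for a step function $\phi = \sum_{i=1}^N c_i \One_{B_i}$ on disjoint boxes, iteratively apply Def.~\ref{1.2} to split $X$ along coordinate hyperplanes separating the rescaled boxes $rB_i$. After $O(N)$ splits (a number independent of $r$) one obtains
\[
\int \phi(t/r) X_t \, \D t \stackrel{d}{=} \sum_i c_i \int_{rB_i} X^{(i)}_t \, \D t \;-\; \sum_j L_j,
\]
where $X^{(i)}$ are independent copies of $X$ and $L_j = \int \phi(t/r) Y^{(j)}_t \, \D t$ are the leak corrections. Each $L_j$ is centered (because $\Ex X_t = 0$ forces $\Ex Y^{(j)}_t = 0$), and Def.~\ref{1.3}(c) together with Property~1 gives $\Ex \exp \lambda|L_j| \le 2$ for $\lambda \le 1/\|\phi\|_\infty$, hence $|\log \Ex \exp \lambda L_j| = O(\lambda^2)$. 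A two-sided H\"older inequality separating the product-over-$i$ factor (by independence of the $X^{(i)}$) from the leak factors, combined with Stage~1 applied to each $rB_i$, yields the main contribution $\tfrac12 r^d \lambda^2 \sigma_X^2 \sum_i c_i^2 \vol(B_i) = \tfrac12 r^d \lambda^2 \|\phi\|^2 \sigma_X^2$; the $O(N \lambda^2)$ leak corrections are negligible after dividing by $r^d \lambda^2$.

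Finally, approximate continuous compactly supported $\phi$ uniformly by step functions $\phi_n$ of common compact support, so that $\|\phi_n\|^2 \to \|\phi\|^2$. A H\"older sandwich with parameter $\theta\in(0,1)$ separates $\log \Ex \exp \lambda \int \phi(t/r) X_t \, \D t$ into a main term involving $\phi_n$ (handled by the previous paragraph) and a correction involving $\phi - \phi_n$. The main obstacle is controlling this correction uniformly in $r,\lambda$: a naive $L^\infty$ bound via Def.~\ref{1.3}(a), summed over $O(r^d)$ unit cubes covering $r\cdot\text{supp}\,\phi$, gives a correction of order $r^d \|\phi-\phi_n\|_\infty$ in the log-Laplace, hence blows up after dividing by $r^d \lambda^2$. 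The resolution is Property~1: the field $(\phi-\phi_n)(t/r) X_t$ is itself splittable with constant proportional to $\|\phi-\phi_n\|_\infty$, so applying the step-function stage to it produces a correction of order $r^d \lambda^2 \|\phi - \phi_n\|^2$ (with the $L^2$ norm), which vanishes as $n\to\infty$. The regime $\lambda \log^d r \to 0$ is precisely what makes all these bounds tight; it is inherited from the box MDP of Stage~1 and propagates through the hierarchy of splits without deterioration.
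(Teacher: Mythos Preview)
Your three-stage architecture (box $\to$ step function $\to$ continuous, via two-sided H\"older sandwiches with leak corrections) matches the paper's route through Prop.~\ref{5.5}, Prop.~\ref{5.6}/Lemma~\ref{5.8}, and Lemma~\ref{5.9}. The gap is in how you control the leak and correction terms.

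In Stage~2 you assert that Def.~\ref{1.3}(c) together with Property~1 gives $\Ex \exp \la|L_j| \le 2$ for $\la \le 1/\|\phi\|_\infty$, hence $|\log \Ex \exp \la L_j| = O(\la^2)$ uniformly in $r$. This is false for $d>1$. Def.~\ref{1.3}(c) says only that the leak field $Y$ (after coordinate rearrangement) is \splittable{(1,d-1)}; by Def.~\ref{1.3}(a) this controls $\int_{([0,1)^{d-1}+s)\times\R} |Y_t| \, \D t$, i.e.\ the integral over a \emph{unit} cube in the $d-1$ directions parallel to the splitting hyperplane. But your $L_j = \int \phi(t/r) Y_t \, \D t$ ranges over a slab of size $\sim r^{d-1}$ in those directions, so $\Var L_j$ is of order $r^{d-1}$ and $\log \Ex \exp \la L_j$ is of order $r^{d-1}\la^2$, not $\la^2$. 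Only for $d=1$ does the final clause of Def.~\ref{1.3}(c) give the global bound $\Ex \exp \int_\R |Y_t| \, \D t \le 2$ that you are invoking.

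The correct order $r^{d-1}\la^2$ still suffices---it contributes $O(1/r)$ after division by $r^d\la^2$---but establishing it in the regime $\la\log^d r \to 0$ is precisely the content of Corollary~\ref{4.11}: a uniform quadratic upper bound $f(\la; r_2,\dots,r_d) \le C_{d-1}\la^2$ over all \splittable{1} fields in dimension $d-1$, valid when $C_{d-1}|\la| \le \sqrt{r_2\cdots r_d}\,\log^{-(d-1)}(r_2\cdots r_d)$. This is not a triviality; it is the output of the entire duplication-inequality machinery of Sections~\ref{sect3}--\ref{sect4}. The paper's Lemma~\ref{5.7} makes the dependence explicit: the leak term in the H\"older sandwich is $\frac{p-1}{p} f\bigl(-\frac{p}{p-1}\la\sqrt{\vol B}; r_2,\dots,r_d\bigr)$, and it is bounded via Corollary~\ref{4.11}, not via an exponential-moment estimate on $L_j$.

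The same issue recurs in Stage~3. You cannot ``apply the step-function stage'' to $(\phi-\phi_n)(t/r)X_t$: the function $\phi-\phi_n$ is not a step function, and Stage~2 delivers a limit, not a uniform-in-$r$ bound. What is needed (and what the paper uses in Lemma~\ref{5.9}) is again Corollary~\ref{4.11}, now in dimension $d$, applied to the \splittable{1} field $\frac{1}{\|\phi-\phi_n\|_\infty}(\phi-\phi_n)(t/r)X_t$ on the box $[-ra,ra)^d$; this gives a correction bounded by $\eps\, C_d(2a)^d$ after division by $r^d\la^2$. Your remark that ``the regime $\la\log^d r\to 0$\dots propagates through the hierarchy of splits without deterioration'' is exactly the nontrivial fact encoded in Theorem~\ref{II-1.5} and Corollary~\ref{4.11}, and it has to be proved, not assumed.
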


\begin{corollary}[\emph{moderate deviations}]\label{1.6}
For $ X $ and $ \phi $ as above, if $ \|\phi\| \si_X \ne 0 $, then
\[
\lim_{\textstyle\myatop
 { r\to\infty, c\to\infty }
 { ( c\log^d r )^2 / r^d \to 0 }
}
\frac1{c^2} \log \PR{ \int_{\R^d} \phi \Big( \frac1r t \Big) X_t \, \D
 t \ge c \|\phi\| \si_X r^{d/2} } = -\frac12 \, .
\]
\end{corollary}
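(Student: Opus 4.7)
The corollary follows from Theorem~\ref{theorem1} by the standard Cram\'er/Chernoff passage from Laplace asymptotics to probability tails: exponential Markov for the upper bound and exponential tilting for the lower. Write $\tau = \|\phi\|\si_X > 0$, $Z_r = \int_{\R^d}\phi(t/r)X_t\,\D t$, $a = c\tau r^{d/2}$, $\La_r(\la) = \log\Ex e^{\la Z_r}$, and $\la = c/(\tau r^{d/2})$. The hypothesis $(c\log^d r)^2/r^d\to 0$ implies $\la\to 0$ and $\la\log^d r\to 0$, so Theorem~\ref{theorem1} applies to $\la$ and, since the regime is preserved under any fixed positive rescaling, also to every $\kappa\la$ with fixed $\kappa>0$, giving $\La_r(\kappa\la) = \tfrac12\kappa^2 c^2(1+o(1))$.

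\textbf{Upper bound.} With $\la a = c^2$, Markov's inequality gives
\[
\log\Pr{Z_r \ge a} \le -\la a + \La_r(\la) = -c^2 + \tfrac12 c^2(1+o(1)) = -\tfrac12 c^2(1+o(1)),
\]
so $\limsup c^{-2}\log\Pr{Z_r \ge a} \le -\tfrac12$.

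\textbf{Lower bound.} Fix small $\eta,\ga > 0$ and set $\la' = (1+\eta)\la$, $b = (1+\ga)(1+\eta)a$. Introduce the tilt $d\tilde{\mathbb{P}}/d\mathbb{P} = e^{\la' Z_r - \La_r(\la')}$. Change of measure yields
\[
\Pr{Z_r \ge a} \ge \Pr{a\le Z_r\le b} \ge e^{-\la' b + \La_r(\la')}\,\tilde{\mathbb{P}}(a\le Z_r\le b),
\]
and a short computation using $\la' b = (1+\ga)(1+\eta)^2 c^2$ and $\La_r(\la') = \tfrac12(1+\eta)^2 c^2(1+o(1))$ gives $-\la' b + \La_r(\la') = -(1+\eta)^2(\tfrac12+\ga)c^2 + o(c^2)$. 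It then suffices to show $\tilde{\mathbb{P}}(a\le Z_r\le b) \ge \tfrac12$ eventually; this is obtained from two one-sided exponential Markov estimates under $\tilde{\mathbb{P}}$, using auxiliary tilts $\la'(1\pm\de)$ with a suitably small fixed $\de = \de(\eta,\ga) > 0$. Each reduces via Theorem~\ref{theorem1} to the same quadratic asymptotic and yields a bound $\le e^{-\kappa c^2(1+o(1))}$ for some $\kappa=\kappa(\eta,\ga,\de) > 0$, so both $\tilde{\mathbb{P}}(Z_r < a)$ and $\tilde{\mathbb{P}}(Z_r > b)$ vanish. Taking $\liminf c^{-2}\log\Pr$ and then letting $\eta,\ga \downarrow 0$ gives $\liminf c^{-2}\log\Pr{Z_r \ge a} \ge -\tfrac12$.

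\textbf{Main point to check.} The only real technical care is that every tilt employed ($\la$, $\la'$, $\la'(1\pm\de)$) lies in the regime for Theorem~\ref{theorem1} jointly with the double limit $r\to\infty$, $c\to\infty$, $(c\log^d r)^2/r^d\to 0$; this is automatic since each is a fixed positive multiple of $c/(\tau r^{d/2})$. The argument must scrupulously avoid any expansion of $\La_r$ in powers of $\la$ (Theorem~\ref{theorem1} supplies only the quadratic asymptotic, not derivative information), so tilted concentration is forced through exponential Markov rather than through any variance-type estimate.
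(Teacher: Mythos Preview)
Your argument is correct and is precisely the standard Cram\'er/Chernoff passage from the Laplace asymptotic (Theorem~\ref{theorem1}) to tail probabilities; the paper itself does not spell this out but simply refers to the identical derivation in \cite[Corollaries~1.7,~1.8]{I} (and \cite[Sect.~6]{III}), so your approach coincides with the one the paper invokes.
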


\begin{corollary}\label{1.7}
For $ X $ and $ \phi $ as above, if $ \|\phi\| \si_X \ne 0 $, then the
distribution of $ r^{-d/2} \int_{\R^d} \phi \( \frac1r t \) X_t \, \D
t $ converges (as $ r \to \infty $) to the normal distribution $
N ( 0, \|\phi\|^2 \si_X^2 ) $.
\end{corollary}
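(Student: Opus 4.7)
The plan is to deduce Corollary~\ref{1.7} from Theorem~\ref{theorem1} by evaluating, for each fixed real $s$, the moment generating function of
\[
Z_r := r^{-d/2} \int_{\R^d} \phi\Big(\frac1r t\Big) X_t \, \D t
\]
at $s$, and then invoking the classical MGF continuity theorem.

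Concretely, I would fix $s \in \R$ and set $\la = \la_r := s \, r^{-d/2}$. Then as $r \to \infty$ one has $\la_r \to 0$ and $\la_r \log^d r = s \, r^{-d/2} \log^d r \to 0$, so the joint limiting regime required by Theorem~\ref{theorem1} is in force. Since $r^d \la_r^2 = s^2$, the theorem yields
\[
\log \Ex \exp(s Z_r) \;=\; \log \Ex \exp \la_r \int_{\R^d} \phi\Big(\frac1r t\Big) X_t \, \D t \;\longrightarrow\; \tfrac12 s^2 \|\phi\|^2 \si_X^2 ,
\]
so $s \mapsto \Ex e^{s Z_r}$ converges pointwise on all of $\R$ to $s \mapsto \exp(\tfrac12 s^2 \|\phi\|^2 \si_X^2)$, the moment generating function of $N(0,\|\phi\|^2 \si_X^2)$.

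Finally I would appeal to Curtiss' theorem: pointwise convergence of MGFs on an open interval containing $0$ to the MGF of a probability distribution implies weak convergence of the associated laws. This gives $Z_r \Rightarrow N(0,\|\phi\|^2 \si_X^2)$, which is the stated claim.

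There is essentially no hard step: the corollary is a direct specialization of Theorem~\ref{theorem1} to the scaling $\la = s \, r^{-d/2}$, followed by a standard MGF continuity argument. Two minor points are worth noting. First, Theorem~\ref{theorem1} itself supplies the finiteness of $\Ex e^{s Z_r}$ for every real $s$ and all sufficiently large $r$, so pointwise MGF convergence on all of $\R$ is automatic from the statement of the theorem. Second, the assumption $\|\phi\| \si_X \ne 0$ is needed in Corollary~\ref{1.6} to make the moderate-deviations statement meaningful, but it plays no role here: if $\|\phi\| \si_X = 0$, the same argument gives convergence to $N(0,0) = \de_0$.
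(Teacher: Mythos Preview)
Your argument is correct and is exactly the intended one: the paper does not spell out a proof but says that Corollary~\ref{1.7} follows from Theorem~\ref{theorem1} ``in the same way as \cite[Corollaries 1.7, 1.8]{I} follow from \cite[Theorem 1.6]{I}'', and that route is precisely the specialization $\la = s\,r^{-d/2}$ followed by the MGF continuity theorem that you wrote out. Your two side remarks (finiteness of the MGF from the theorem, and the irrelevance of $\|\phi\|\si_X \ne 0$ for this particular corollary) are also accurate.
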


\section[Basic observations]
  {\raggedright Basic observations}
\label{sect2}
Notions of \splittability{(1,k)} and \splittability{(C,k)} are
used in this section only (for induction in $k$). Further, in Sections
\ref{sect3}--\ref{sect5}, we use only splittability and
\splittability{C}, and need only corollaries (rather than lemmas and
propositions) from this section.

\begin{proposition}\label{2.1}
If $ (X_t)_{t\in\R^d} $ is \splittable{(1,k)} and $ \phi : \R^d \to
[-1,1] $ is Borel measurable, then $ (\phi(t)X_t)_{t\in\R^d} $ is
\splittable{(1,k)}.
\end{proposition}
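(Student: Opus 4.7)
The plan is to induct on $k$, verifying parts (a), (b), (c) of Def.~\ref{1.3} for $\phi X$ in turn. For Borel $\phi:\R^d\to[-1,1]$ and any random LFSBM $Z$, define $\phi Z$ as the random LFSBM $B\mapsto\int_B\phi(t)\,\D Z_t$; its variation is dominated by $|Z|$, so local finiteness and measurability are clear. Property (a) for $\phi X$ is then immediate from $|\phi(t)X_t|\le|X_t|$, and (b) follows by Fubini (justified by (a) for $\phi X$) together with $\Ex X_t=0$.

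The substance lies in (c). The key observation is that splitting commutes with multiplication by a deterministic bounded Borel function: if $(Z^0,Z^-,Z^+)$ is a split of $Z$ on some $\ti\Om$, then $(\phi Z^0,\phi Z^-,\phi Z^+)$ is a split of $\phi Z$. Independence of the $\pm$ pieces is inherited since they are deterministic measurable functions of $Z^\pm$, and the four-tuple $(\phi Z,\phi Z^0,\phi Z^-,\phi Z^+)$ is identically distributed because the fixed pushforward $\mu\mapsto\phi\mu$ acts componentwise on the identically distributed $(Z,Z^0,Z^-,Z^+)$. Moreover, from the defining formula in Def.~\ref{1.2}(a), the leak of the new split equals $\phi\cdot Y$, where $Y$ is the leak of the original split.

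Apply this to the permuted field $\ti X_{t_1,\dots,t_d}=X_{t_2,\dots,t_i,t_1+r,t_{i+1},\dots,t_d}$ of Def.~\ref{1.3}(c). One has $\widetilde{\phi X}=\ti\phi\cdot\ti X$, where $\ti\phi$ is the analogously permuted-and-shifted $\phi$, still $[-1,1]$-valued. Multiplying the split witnessing the hypothesis-given leak $\ti Y$ of $\ti X$ by $\ti\phi$ produces a leak $\ti\phi\cdot\ti Y$ of $\widetilde{\phi X}$; the re-permuted field attached to $\phi X$ by Def.~\ref{1.3}(c) then equals $\psi\cdot Y$, where $\psi:\R^d\to[-1,1]$ is obtained from $\phi$ by a further fixed permutation-plus-shift and $Y$ is the re-permuted leak of $X$ produced by the hypothesis.

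For the inductive step ($k>1$), the induction hypothesis applied to $\psi$ and the $(1,k-1)$-splittable $Y$ yields $(1,k-1)$-splittability of $\psi Y$. For the base case $k=1$, the required bound $\Ex\exp\int_{\R^d}|\psi(t)Y_t|\,\D t\le\Ex\exp\int_{\R^d}|Y_t|\,\D t\le2$ is immediate from $|\psi|\le 1$. There is no genuine obstacle here; the only care needed is bookkeeping the compositions of permutations and shifts acting on $\phi$, all of which trivially preserve $[-1,1]$-valuedness so that the induction hypothesis applies cleanly at the next level.
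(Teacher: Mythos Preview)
Your proof is correct and follows essentially the same route as the paper: induction on $k$, with (a) immediate from $|\phi X|\le|X|$, (b) from linearity/Fubini, and (c) via the observation that splits and leaks commute with multiplication by a bounded deterministic Borel function (which the paper isolates as Lemma~\ref{2.2} and you reprove inline), leading to $\widetilde{\phi X}=\ti\phi\,\ti X$ with leak $\ti\phi\,\ti Y$ and re-permuted field $\psi Y$, to which the induction hypothesis applies. The only cosmetic difference is that for (b) the paper argues by approximation (indicators, linear combinations, limits) rather than invoking Fubini directly.
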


\begin{lemma}\label{2.2}
Let $ \phi : \R^d \to \R $ be a locally bounded  Borel measurable
function.

(a) If $(X^0,X^-,X^+)$ is a split of a random field $X$, then $(\phi
X^0,\phi X^-,\phi X^+)$ is a split of the random field $ \phi X =
(\phi(t)X_t)_{t\in\R^d} $.

(b) If $Y$ is the leak of the split $(X^0,X^-,X^+)$, then $\phi Y$ is
the leak of the split $(\phi X^0,\phi X^-,\phi X^+)$.

(c) If $Y$ is a leak for $X$, then $\phi Y$ is a leak for $\phi X$.
\end{lemma}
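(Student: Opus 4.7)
The plan is to observe that multiplication by a locally bounded Borel function $\phi$ acts as a deterministic measurable map on the space of LFSBMs, and then to check that each of (a)--(c) follows by pushing this map through the relevant constructions.

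First I would verify that for a LFSBM $\mu$ on $\R^d$ and a locally bounded Borel $\phi$, the signed measure $\phi\mu$ defined by $\int f \, \D(\phi\mu) = \int f\phi \, \D\mu$ is again a LFSBM, and that the assignment $\mu \mapsto \phi\mu$ is Borel measurable. Local boundedness of $\phi$ gives local finiteness of $\phi\mu$. For measurability it suffices, by the generating family for the \sif\ recalled in Section~\ref{sect1}, to check that $\mu \mapsto \int f\, \D(\phi\mu) = \int f\phi \, \D\mu$ is measurable for each compactly supported continuous $f$; and since $f\phi$ is a bounded Borel function of bounded support, this is precisely an element of the alternative generating family mentioned in the same paragraph. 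Thus $\phi X$ is a well-defined random field whenever $X$ is.

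For part (a), the two conditions in Definition~\ref{1.1} are preserved under any coordinatewise deterministic transformation on the same $\ti\Om$: the pair $(\phi X^-, \phi X^+)$ is independent because each $\phi X^\pm$ is a measurable function of the corresponding $X^\pm$ alone, and the four random fields $\phi X, \phi X^0, \phi X^-, \phi X^+$ are identically distributed because each is the pushforward of a single common law under the same map $\mu \mapsto \phi\mu$. For part (b), the leak of Definition~\ref{1.2}(a) is given by the pointwise linear combinations $X^\pm - X^0$ on the two half-spaces; since $\phi(X^\pm - X^0) = \phi X^\pm - \phi X^0$ as signed measures, the defining formula applied to $(\phi X^0, \phi X^-, \phi X^+)$ returns exactly $\phi Y$.

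Part (c) is then immediate by chaining (a) and (b): unfolding Definition~\ref{1.2}(b), there is some split $(X^0,X^-,X^+)$ of a field $X'$ with $X'$ distributed like $X$ whose leak $Y'$ is distributed like $Y$; applying (a) gives a split of $\phi X'$, which is distributed like $\phi X$, and applying (b) identifies its leak as $\phi Y'$, which is distributed like $\phi Y$. The only step with any real content is the opening measurability/local finiteness check; after that, every assertion is a routine identity for signed measures or a transparent pushforward argument, so no serious obstacle is expected.
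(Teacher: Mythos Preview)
Your proof is correct and follows essentially the same approach as the paper, which simply records that (a) follows from Def.~\ref{1.1}, (b) from (a) and Def.~\ref{1.2}(a), and (c) from (b) and Def.~\ref{1.2}(b). You have unpacked these references more carefully, in particular supplying the measurability and local finiteness check for $\mu\mapsto\phi\mu$ that the paper leaves implicit; this is a welcome addition rather than a deviation.
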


\begin{proof}
(a): just from Def.~\ref{1.1}; (b): just from (a) and
Def.~\ref{1.2}(a); (c): just from (b) and Def.~\ref{1.2}(b).
\end{proof}

\begin{proof}[Proof of Prop.~\ref{2.1}]
Induction in $k=1,\dots,d$. We inspect Def.~\ref{1.3}(a,b,c).

(a) Just $ |\phi(t)X_t| \le |X_t| $.

(b) First, for an indicator function $ \phi(t) = \One_B (t) $ just $
\int_A \phi(t) X_t \, \D t = \int_{A\cap B} X_t \, \D t $. Second,
take linear combinations of such $\phi$. Third, take limits of such
combinations.

(c) In the notation of Def.~\ref{1.3}(c) we have $ \widetilde{\phi X} =
\ti\phi \ti X $,  $ \ti \phi : \R^d \to [-1,1] $; by Lemma
\ref{2.2}(c), $ \ti\phi \ti X $ has a leak $ \ti\phi \ti Y $; instead
of $Y$ we get $\psi Y$, $ \psi : \R^d \to [-1,1] $; and $ \psi Y $ is
\splittable{(1,k-1)} by the induction hypothesis, provided that $ k>1
$; otherwise, for $k=1$, just $ |\psi(t)Y_t| \le |Y_t| $.
\end{proof}

\begin{corollary}\label{2.3}
If $ (X_t)_{t\in\R^d} $ is \splittable{C_1} and $ \phi : \R^d \to
[-C_2,C_2] $ Borel measurable, then $ (\phi(t)X_t)_{t\in\R^d} $ is
\splittable{C_1 C_2}.
\end{corollary}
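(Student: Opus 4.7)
The plan is to reduce immediately to Proposition \ref{2.1} by absorbing the two constants into a single normalization.

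First I would unfold the definitions: by Definition \ref{1.4}(b), saying $X$ is \splittable{C_1} means $X$ is \splittable{(C_1,d)}, which by Definition \ref{1.4}(a) means $\frac1{C_1} X$ is \splittable{(1,d)}. Similarly, to prove the conclusion it suffices to show that $\frac1{C_1 C_2} (\phi X)$ is \splittable{(1,d)}.

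Now I would observe the algebraic identity
\[
\frac1{C_1 C_2} (\phi(t) X_t) = \frac{\phi(t)}{C_2} \cdot \Bigl( \frac{X_t}{C_1} \Bigr),
\]
and note that the rescaled function $\psi := \phi/C_2$ maps $\R^d$ into $[-1,1]$ and is Borel measurable. Since $\frac1{C_1} X$ is \splittable{(1,d)} (by the previous paragraph) and $\psi : \R^d \to [-1,1]$ is Borel measurable, Proposition \ref{2.1} (applied with $k=d$) yields that $\psi \cdot \frac1{C_1} X$ is \splittable{(1,d)}. This is exactly $\frac1{C_1 C_2}(\phi X)$, so $\phi X$ is \splittable{C_1 C_2}.

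There is no real obstacle here; the whole content is the bookkeeping that a scaling by $C_2$ absorbs $\phi$ into the $[-1,1]$-valued regime required by Proposition \ref{2.1}, while a scaling by $C_1$ absorbs $X$ into the \splittability{(1,d)}-valued regime, and these two scalings combine multiplicatively.
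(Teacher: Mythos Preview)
Your proof is correct and takes essentially the same approach as the paper: normalize $X$ by $C_1$ to make it \splittable{(1,d)}, then apply Proposition~\ref{2.1} with the function $\phi/C_2 : \R^d \to [-1,1]$. The paper's version is terser (two lines), but the argument is identical.
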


\begin{proof}
$ \frac1{C_1} X $ is \splittable{(1,d)}; by Prop.~\ref{2.1}, $
\frac1{C_2} \phi \frac1{C_1} X $ is \splittable{(1,d)}.
\end{proof}

\begin{proposition}\label{2.4}
If $ (X_{t_1,\dots,t_d})_{t_1,\dots,t_d\in\R} $ is \splittable{(1,k)},
$ (\ell_1,\dots,\ell_k) $ is a permutation of $ (1,\dots,k) $, and $
s \in \R^d $, then
\[
(X_{t_{\ell_1}+s_1,\dots,t_{\ell_k}+s_k,
 t_{k+1}+s_{k+1},\dots,t_d+s_d})_{t_1,\dots,t_d\in\R}
\]
is \splittable{(1,k)}.
\end{proposition}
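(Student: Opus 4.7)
The plan is induction on $k$, verifying Def.~\ref{1.3}(a), (b), (c) for the field $\hat X_{t_1,\dots,t_d} := X_{t_{\ell_1}+s_1,\dots,t_{\ell_k}+s_k,t_{k+1}+s_{k+1},\dots,t_d+s_d}$. For (a), the change of variables $u_j = t_{\ell_j}+s_j$ ($j\le k$), $u_j = t_j+s_j$ ($j > k$) turns $\int_{([0,1)^k+s')\times\R^{d-k}} |\hat X_t|\,\D t$ into $\int_{([0,1)^k+s'')\times\R^{d-k}} |X_u|\,\D u$ with $s''_j = s'_{\ell_j}+s_j$, because the unit cube is invariant under permutation of its factors; part (a) for $\hat X$ then follows from (a) for $X$. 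Part (b) is immediate from $\Ex X_t = 0$.

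The real work is in (c). Given $i'\in\{1,\dots,k\}$ and $r\in\R$, let $i$ be the unique index in $\{1,\dots,k\}$ with $\ell_i=i'$, and set $r_X := r+s_i$. I invoke Def.~\ref{1.3}(c) for $X$ at direction $i$ and value $r_X$, obtaining a leak $\ti Y^{(X)}$ of $\ti X^{(X)}_{t_1,\dots,t_d} = X_{t_2,\dots,t_i,t_1+r_X,t_{i+1},\dots,t_d}$ for which $Y^{(X)}_{t_1,\dots,t_d} := \ti Y^{(X)}_{t_k,t_1,\dots,t_{k-1},t_{k+1},\dots,t_d}$ is \splittable{(1,k-1)}. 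Direct bookkeeping then shows
\[
\hat{\ti X}_{t_1,\dots,t_d} = \ti X^{(X)}_{t_1,\, f(t_2,\dots,t_d)},
\]
where $f\colon \R^{d-1}\to\R^{d-1}$ is a shift-permutation that maps coordinates indexed by $\{2,\dots,k\}$ bijectively onto $\{2,\dots,k\}$ (via the permutation induced by $\ell$ and $i'$) and acts by pure shifts on the indices $\{k+1,\dots,d\}$.

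Since $f$ fixes coordinate $1$, transporting any split of $\ti X^{(X)}$ along the map $(t_1,t_2,\dots,t_d)\mapsto(t_1,f(t_2,\dots,t_d))$ produces a split of $\hat{\ti X}$; and because the sign of $t_1$ is preserved, the corresponding leak is $\hat{\ti Y}_t = \ti Y^{(X)}_{t_1,\, f(t_2,\dots,t_d)}$. Unwinding the cyclic permutations relating $\hat Y$ to $\hat{\ti Y}$ and $Y^{(X)}$ to $\ti Y^{(X)}$ then expresses $\hat Y$ as a shift-and-permute of $Y^{(X)}$ in which the first $k-1$ coordinates are permuted by some $\rho''\in\mathrm{Sym}(k-1)$, coordinate $k$ is fixed, and the remaining coordinates receive pure shifts; this matches exactly the form handled by Proposition~\ref{2.4} at level $k-1$ (with the $k$-th shift set to zero), so the induction hypothesis yields that $\hat Y$ is \splittable{(1,k-1)}. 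The base case $k=1$ is pure shift invariance: no permutation, the cyclic rearrangement is trivial, and the bound $\Ex \exp \int_{\R^d} |Y_t|\,\D t \le 2$ is preserved by a change of variables.

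The main bookkeeping obstacle is confirming that the induced $f$ actually preserves the set $\{2,\dots,k\}$; if it failed to, the subsequent cyclic unwinding would entangle the first $k-1$ coordinates of $Y^{(X)}$ with the tail coordinates, and the inductive hypothesis (which permutes only the first $k-1$ coordinates) would not apply. Everything else is routine: independence and identical distribution in Def.~\ref{1.1} are preserved under measurable pullback, and the leak in Def.~\ref{1.2}(a) transforms covariantly whenever the pullback preserves the sign of $t_1$.
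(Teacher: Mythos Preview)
Your proposal is correct and follows essentially the same route as the paper's proof. Both argue by induction on $k$; for part~(c), both pick the same pair $(i,r_X)$ for $X$ (in the paper's notation, $i'=p^{-1}(i)$ and $r'=r+s_{i'}$), so that the conjugating map --- your $f$, the paper's $\delta=\beta_{i',r'}^{-1}\circ\alpha(p,s)\circ\beta_{i,r}$ --- fixes the first coordinate; both then transport the leak along this map (this is Lemma~\ref{2.5}) and undo the cyclic shift $\gamma_k$ to land in the inductive hypothesis at level $k-1$.

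The one place where the paper goes further than your sketch is exactly the point you flag as ``the main bookkeeping obstacle'': verifying that the induced map preserves the block $\{2,\dots,k\}$ (equivalently, that $\epsilon=\gamma_k^{-1}\circ\delta\circ\gamma_k$ lies in $S_{k-1}$). The paper does this explicitly by tracking coordinate functions: it shows $f_1\circ\delta=f_1$ from the choice of $r'$, then $f_k\circ\epsilon=f_k$ by conjugation, and notes $f_m\circ\epsilon=f_m$ for $m>k$ since every factor $\alpha(p,s),\gamma_k,\beta_{i,r},\beta_{i',r'}$ already fixes those coordinates. Since $\delta$ is a composition of shift-permutations it is itself one, and these constraints force its permutation part to stabilise $\{2,\dots,k\}$. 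Your sketch asserts this but does not prove it; filling it in along the paper's lines completes the argument.
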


Given a random field $X$ on $\R^d$ and a homeomorphism $ \al
: \R^d \to \R^d $ that preserves Lebesgue measure, we introduce the
random field $ X \circ \al $ by $ \forall t\in\R^d \;\> (X\circ\al)_t
= X_{\al(t)} $; that is, $ \int_B (X\circ\al)_t \, \D t
= \int_{\al(B)} X_t \, \D t $ for all bounded Borel measurable $
B \subset \R^d $.

We need only coordinate permutations and shifts; that is,
homeomorphisms $\al$ of the form $ \al(p,s) : (t_1,\dots,t_d) \mapsto
( t_{p(1)}+s_1, \dots, t_{p(d)}+s_d ) $ where $ p
: \{1,\dots,d\} \to \{1,\dots,d\} $ is a permutation, and $ s \in \R^d
$. These $ \al(p,s) $ are a group of transformations. We denote by
$S_k$ (for $ k \in \{1,\dots,d\} $) the subgroup of all permutations
$p$ such that $ p(k+1)=k+1, \dots, p(d)=d $. We reformulate
Prop.~\ref{2.4} accordingly:
\begin{multline}
\text{if } X \text{ is \splittable{(1,k)}, then }
 X \circ \al(p,s) \text{ is \splittable{(1,k)}} \\
\text{ whenever } p \in S_k \text{ and } s \in \R^d \, .
\end{multline}

\begin{lemma}\label{2.5}
If $Y$ is a leak for $X$, then $ Y \circ \al(p,s) $ is a leak for $
X \circ \al(p,s) $ whenever $ p(1)=1 $ and $ s_1=0 $.
\end{lemma}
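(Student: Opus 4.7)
My plan is to unwind Definition~\ref{1.2} directly. Since $Y$ is a leak for $X$, I fix a split $(X^0,X^-,X^+)$ of $X$ on some $\ti\Om$ whose own leak $\ti Y$ is distributed like $Y$. Writing $\al := \al(p,s)$, my candidate split of $X\circ\al$ will be $(X^0\circ\al,\, X^-\circ\al,\, X^+\circ\al)$, still on $\ti\Om$. I plan to show (i) this really is a split of $X\circ\al$, and (ii) its leak coincides sample-wise with $\ti Y\circ\al$. Once (i) and (ii) are in hand, $\ti Y\circ\al$ is distributed like $Y\circ\al$, because composition with the fixed measurable map $\al$ preserves distribution, and the lemma follows.

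For (i), I will invoke the fact that $\mu\mapsto\mu\circ\al$ is a measurable self-map on the space of LFSBMs (it pulls back each generating functional $\mu\mapsto\int f\,\D\mu$ to $\mu\mapsto\int (f\circ\al^{-1})\,\D\mu$, and local finiteness is preserved since $\al$ preserves Lebesgue measure). Applying this same measurable map componentwise to a split preserves both the joint distribution of the four fields and the mutual independence of the last two; hence $X^-\circ\al$ and $X^+\circ\al$ remain independent, and $X\circ\al,\,X^0\circ\al,\,X^-\circ\al,\,X^+\circ\al$ remain identically distributed.

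For (ii), the hypotheses $p(1)=1$ and $s_1=0$ will do all the work: together they force $\al$ to fix the first coordinate, so $\al$ sends $(-\infty,0)\times\R^{d-1}$ into itself and $[0,\infty)\times\R^{d-1}$ into itself. On a bounded Borel $A\subset(-\infty,0)\times\R^{d-1}$ I will then compute
\[
\int_A\bigl((X^-\circ\al)_t-(X^0\circ\al)_t\bigr)\,\D t
= \int_{\al(A)}(X^-_t-X^0_t)\,\D t
= \int_{\al(A)}\ti Y_t\,\D t
= \int_A(\ti Y\circ\al)_t\,\D t,
\]
using the definition of $X\circ\al$, then the defining property of $\ti Y$ on the negative half-space (applicable precisely because $\al(A)$ still lies there), then once more the definition of composition; the analogous computation with $X^+$ in place of $X^-$ handles $A\subset[0,\infty)\times\R^{d-1}$, and (ii) follows.

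I do not expect any serious obstacle. The whole content of the lemma is the observation that $p(1)=1$ and $s_1=0$ preserve the half-space decomposition used to define the leak; dropping either hypothesis would let $\al$ mix the half-spaces, and the second equality above would lose its justification. Under the stated hypotheses the rest is bookkeeping from Definitions~\ref{1.1} and~\ref{1.2}.
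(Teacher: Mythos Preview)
Your proof is correct and follows essentially the same approach as the paper's: take a split realizing the leak, push it forward through $\al$, and verify that the resulting leak is $\ti Y\circ\al$ because $p(1)=1$ and $s_1=0$ make $\al$ preserve both half-spaces. You are simply more explicit than the paper about the measurability of $\mu\mapsto\mu\circ\al$ and about why the half-space decomposition is preserved, but the argument is the same.
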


\begin{proof} \let\qed\relax
Having a split $(X^0,X^-,X^+)$ of $X$ such that\footnote{%
 Or rather, $Y$ is distributed like this.}
\[
Y_{t_1,\dots,t_d} = \begin{cases}
 X^-_{t_1,\dots,t_d} - X^0_{t_1,\dots,t_d} &\text{when } t_1<0,\\
 X^+_{t_1,\dots,t_d} - X^0_{t_1,\dots,t_d} &\text{when } t_1\ge0
\end{cases}
\]
and denoting $ \al = \al(p,s) $, we observe that $ ( X^0 \circ \al,
X^- \ccirc \al, X^+ \ccirc \al ) $ is a split of $ X \circ \al $, and
\[
(Y\circ\al)_{t_1,\dots,t_d} = \begin{cases}
 (X^-\ccirc\al)_{t_1,\dots,t_d} - (X^0\circ\al)_{t_1,\dots,t_d} &\text{when } t_1<0,\\
 (X^+\ccirc\al)_{t_1,\dots,t_d} - (X^0\circ\al)_{t_1,\dots,t_d}
  &\text{when } t_1\ge0.  \qquad\qquad \rlap{$\qedsymbol$}
\end{cases}
\]
\end{proof}

\begin{sloppypar}
\begin{proof}[Proof of Prop.~\ref{2.4}]
Induction in $k=1,\dots,d$. We inspect Def.~\ref{1.3}(a,b,c).

\begin{sloppypar}
(a) $ \int_{([0,1)^k+r)\times\R^{d-k}}
|X_{\al(p,s)(t)}| \, \D t = \int_{([0,1)^k+r')\times\R^{d-k}}
|X_t| \, \D t $ where \linebreak[4]
$ r' = (r_{p(1)}+s_1,\dots,r_{p(k)}+s_k) $.
\end{sloppypar}

(b) $ \int_A X_{\al(p,s)(t)} \, \D t = \int_B X_t \, \D t $
where $ B = \al(p,s)(A) $.

(c) We introduce transformations $ \be_{i,r} $ and $ \ga_k $ by\\
$ \be_{i,r} : (t_1,\dots,t_d) \mapsto
 (t_2,\dots,t_i,t_1+r,t_{i+1},\dots,t_d) $,\\
$ \ga_k : (t_1,\dots,t_d) \mapsto
 (t_k,t_1,\dots,t_{k-1},t_{k+1},\dots,t_d) $;\\
or, more formally, in terms of coordinate functions $ f_1,\dots,f_d
: \R^d \to \R $, $ f_i : (t_1,\dots,t_d) \mapsto t_i $, we have $
f_1 \circ \ga_k = f_k $, $ f_j \circ \ga_k = f_{j-1} $ for $j$ such
that $ 2 \le j \le k $ (if any), etc. We observe
in \ref{1.3}(c) that $ \ti X = X \circ \be_{i,r} $ and $ Y = \ti
 Y \circ \ga_k $. We have to check \ref{1.3}(c) for $ X \circ \al(p,s)
$. Given $ i \in \{1,\dots,k\} $ and $ r \in \R $, we note that
\begin{equation}\label{*}
f_i \circ \be_{i,r} = f_1+r \, ;
\end{equation}
taking $ i' = p^{-1}(i) $ we get
\[
f_{i'} \circ \al(p,s) \circ \be_{i,r} = f_{p(i')} \circ \be_{i,r} +
s_{i'} = f_i \circ \be_{i,r} + s_{i'} = f_1 + r + s_{i'} \, .
\]
We take $ r' = r+s_{i'} $ and get
\[
f_{i'} \circ \be_{i',r'} = f_1 + r' = f_1 + r + s_{i'} \, .
\]
By \eqref{*}, $ f_1 \circ \be_{i,r}^{-1} = f_i - r $ (since $ f_i
= f_i \circ \be_{i,r} \circ \be_{i,r}^{-1} = f_1 \circ \be_{i,r}^{-1}
+ r $), and similarly $ f_1 \circ \be_{i',r'}^{-1} = f_{i'} -
r' $. We introduce $ \de = \be^{-1}_{i',r'} \circ \al(p,s) \circ
\be_{i,r} $ and get
\begin{equation}\label{**}
f_1 \circ \de = f_1
\end{equation}
(since $ f_1 \circ \de = f_{i'} \circ \al(p,s) \circ \be_{i,r} - r' =
f_1 $), which shows that $ \de $ satisfies the condition of
Lemma \ref{2.5}.

Lemma \ref{2.5} applied to $\de$ and the leak $ \ti Y $ for $ \ti X =
X \circ \be_{i',r'} $ provides a leak $ \ti Y \circ \de $ for $ \ti
X \circ \de = X \circ \be_{i',r'} \circ \de =
X \circ \al(p,s) \circ \be_{i,r} $.

\textsc{Case $ k=1 $. } The permutation $p$ is necessarily trivial;
$ \al(p,s) $ is the shift by $s$; $ r'=r+s_1 $; $\de$ is the shift by
$ (0,s_2,\dots,s_d) $; $ Y = \ti Y $; and finally $ \int_{\R^d} |
(Y\circ\de)_t | \, \D t = \int_{\R^d} |Y_t| \, \D t $.

\textsc{Case $ k>1 $. } We know that $ \ti Y \circ \ga_k $
is \splittable{(1,k-1)}. We have $ f_1 \circ \ga_k = f_k $, thus
\[
f_k \circ \ga_k^{-1} = f_1
\]
(since $ f_1 = f_1 \circ \ga_k \circ \ga_k^{-1} = f_k \circ \ga_k^{-1}
$). We introduce $ \eps = \ga_k^{-1} \circ \de \circ \ga_k $ and note
that
\[
f_k \circ \eps = f_k
\]
(since $ f_k \circ \eps = f_k \circ \ga_k^{-1} \circ \de \circ \ga_k =
f_1 \circ \de \circ \ga_k = f_1 \circ \ga_k = f_k $); taking into
account that $ f_m \circ \eps = f_m $ for all $ m \in \{k+1,\dots,d\}
$ (since this relation holds for $\al(p,s)$, $ \ga_k $, $ \be_{i,r} $
and $ \be_{i',r'} $) we conclude that $ \eps $ is of the form
$ \al(p',s') $ with $ p' \in S_{k-1} $. By the induction
hypothesis, \splittability{(1,k-1)} of $ \ti Y \circ \ga_k $
implies \splittability{(1,k-1)} of $ \ti Y \circ \ga_k \circ \eps
= \ti Y \circ \de \circ \ga_k $, as required.
\end{proof}
\end{sloppypar}

\begin{corollary}\label{2.6}
If $ (X_{t_1,\dots,t_d})_{t_1,\dots,t_d\in\R} $ is \splittable{C}
and $ (\ell_1,\dots,\ell_d) $ is a permutation of $ (1,\dots,d) $,
then $ (X_{t_{\ell_1},\dots,t_{\ell_d}})_{t_1,\dots,t_d\in\R}
$ is \splittable{C}.
\end{corollary}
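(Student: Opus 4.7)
The statement is essentially a direct corollary of Proposition \ref{2.4} combined with the scaling convention in Definition \ref{1.4}. My plan is to reduce it in one line, with no further ingredients needed.

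First I would unfold the definitions: by Definition \ref{1.4}(b), $X$ being \splittable{C} means $X$ is \splittable{(C,d)}; and by Definition \ref{1.4}(a) this is equivalent to $\tfrac1C X$ being \splittable{(1,d)}. So the hypothesis provides a \splittable{(1,d)} random field $\tfrac1C X$.

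Next I would apply Proposition \ref{2.4} with $k=d$, with the permutation $p = (\ell_1,\dots,\ell_d)$ of $\{1,\dots,d\}$, and with $s = 0 \in \R^d$. Note that when $k=d$ the condition ``$p(k+1)=k+1,\dots,p(d)=d$'' is vacuous, so $p \in S_d$ is arbitrary. Proposition \ref{2.4} then yields that
\[
\bigl( \tfrac1C X_{t_{\ell_1},\dots,t_{\ell_d}} \bigr)_{t_1,\dots,t_d\in\R}
\]
is \splittable{(1,d)}. Unwinding Definition \ref{1.4} in the other direction, this says exactly that $(X_{t_{\ell_1},\dots,t_{\ell_d}})_{t_1,\dots,t_d\in\R}$ is \splittable{(C,d)}, i.e.\ \splittable{C}.

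There is no real obstacle here; the only thing to verify is that the scaling by $\tfrac1C$ commutes with the coordinate permutation, which is immediate because the permutation acts on the indexing variable $t$ while the scaling acts on the values $X_t$. The real work was done in Proposition \ref{2.4}; Corollary \ref{2.6} is just the shift-free, $k=d$ specialization packaged at the $C$-splittability level, parallel to how Corollary \ref{2.3} packages Proposition \ref{2.1}.
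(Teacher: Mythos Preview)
Your proof is correct and essentially identical to the paper's own argument: unfold Definition~\ref{1.4} to pass to $\tfrac1C X$, apply Proposition~\ref{2.4} with $k=d$ and $s=0$, then rewrap. Your remark that the $S_d$-condition is vacuous when $k=d$ is a useful clarification the paper leaves implicit.
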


\begin{proof}
$ \( \frac1C X_{t_1,\dots,t_d} \)_{t_1,\dots,t_d\in\R} $ is
\splittable{(1,d)}; by Prop.~\ref{2.4}, $
\( \frac1C X_{t_{\ell_1},\dots,t_{\ell_d}} \)_{t_1,\dots,t_d\in\R} $
is \splittable{(1,d)}.
\end{proof}

\begin{corollary}\label{2.9}
If $ (X_t)_{t\in\R^d} $ is \splittable{C} and $ s \in \R^d $, then $
(X_{t+s})_{t\in\R^d} $ is \splittable{C}.
\end{corollary}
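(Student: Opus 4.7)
The plan is to deduce this from Proposition~\ref{2.4} by the same rescaling trick used in the proof of Corollary~\ref{2.6}. By Definition~\ref{1.4}(a,b), $C$-splittability of $(X_t)_{t\in\R^d}$ means exactly that $\bigl(\tfrac1C X_t\bigr)_{t\in\R^d}$ is \splittable{(1,d)}. So the first step is to normalize and work with the \splittable{(1,d)} field $\tfrac1C X$.

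Next I would invoke Proposition~\ref{2.4} in the case $k=d$ with the \emph{identity} permutation $(\ell_1,\dots,\ell_d)=(1,\dots,d)$ and with the given shift $s\in\R^d$. Proposition~\ref{2.4} then tells us that the random field
\[
\bigl( \tfrac1C X_{t_1+s_1,\dots,t_d+s_d} \bigr)_{t_1,\dots,t_d\in\R}
\]
is again \splittable{(1,d)}. But this is precisely $\bigl(\tfrac1C X_{t+s}\bigr)_{t\in\R^d}$, so by Definition~\ref{1.4}(a,b) the shifted field $(X_{t+s})_{t\in\R^d}$ is \splittable{C}, which is the desired conclusion.

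There is essentially no obstacle here: all the work has already been done inside Proposition~\ref{2.4}, whose statement was designed to absorb both coordinate permutations and shifts simultaneously. Corollary~\ref{2.6} extracted the permutation part (with $s=0$); Corollary~\ref{2.9} extracts the shift part (with trivial permutation). If one preferred, one could equally appeal to the intermediate reformulation that if $X$ is \splittable{(1,k)} then so is $X\circ\al(p,s)$ for $p\in S_k$ and $s\in\R^d$, taking $k=d$ and $p=\mathrm{id}$.
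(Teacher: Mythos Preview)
Your proof is correct and follows exactly the same approach as the paper's own proof: normalize to the \splittable{(1,d)} field $\tfrac1C X$ and apply Proposition~\ref{2.4} with $k=d$ (the paper simply says ``its shift is \splittable{(1,d)}'', which amounts to your choice of the identity permutation together with the given $s$).
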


\begin{proof}
$ \frac1C X $ is \splittable{(1,d)}; by Prop.~\ref{2.4}, its shift
is \splittable{(1,d)}.
\end{proof}

\begin{remark}
If $ (X_t)_{t\in\R^d} $ is splittable, then $ (X_{ct})_{t\in\R^d} $ is
splittable whenever $ c \in (0,\infty) $. This is not evident because
of the unit cube used in Def.~\ref{1.3}(a); but see
Prop.~\ref{4.2}. Moreover, $ ( X_{c_1 t_1, \dots, c_d t_d}
)_{t_1,\dots,t_d\in\R} $ is splittable whenever $ c_1,\dots,c_d \in
(0,\infty) $. However, these facts will not be used here.
\end{remark}

Let $ d_1,d_2 \in \{1,2,\dots\} $, $ d=d_1+d_2 $, and $ B \subset
\R^{d_2} $ be a Borel measurable set. Let $X$ be a random field on $
\R^d $ such that for every bounded Borel measurable set $ A \subset
\R^{d_1} $ holds $ \int_{A\times B} |X_t| \, \D t < \infty $ a.s. We
introduce the random field $ X^{(B)} $ on $ \R^{d_1} $ by
\[
X_s^{(B)} = \int_B X_{s,t} \, \D t \quad \text{for } s \in \R^{d_1} \,
, \vspace{-5pt}
\]
that is,
\[
\int_A X_s^{(B)} \, \D s = \int_{A\times B} X_t \, \D t \quad
\text{for bounded Borel } A \subset \R^{d_1} \, .
\]

\begin{lemma}\label{2.10}
(a) If $ (X^0,X^-,X^+) $ is a split of $X$, then $ \( (X^0)^{(B)},
(X^-)^{(B)}, (X^+)^{(B)} \) $ is a split of $X^{(B)}$.

(b) If $Y$ is a leak for $X$, then $ Y^{(B)} $ is a leak for $ X^{(B)}
$.
\end{lemma}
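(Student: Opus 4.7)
The plan is to view $X \mapsto X^{(B)}$ as a measurable deterministic map on the space of LFSBMs (restricted to those satisfying the stated integrability hypothesis) and to push this map through Definitions~\ref{1.1} and~\ref{1.2}. Measurability holds by the \sif\ description given at the start of Section~\ref{sect1}, since $\int_A X_s^{(B)} \, \D s = \int_{A\times B} X_t \, \D t$ for every bounded Borel $A \subset \R^{d_1}$, so indicator-of-box test functions against $X^{(B)}$ are Borel functions of $X$.

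For (a), I would first note that the integrability hypothesis $\int_{A\times B} |X_t| \, \D t < \infty$ a.s.\ transfers to $X^0, X^-, X^+$ because they are identically distributed to $X$; hence $(X^0)^{(B)}, (X^-)^{(B)}, (X^+)^{(B)}$ are well-defined random LFSBMs on $\R^{d_1}$. Applying the same measurable deterministic map to four identically distributed random fields yields four identically distributed random fields, giving the distributional symmetry required by Definition~\ref{1.1}. Independence of $(X^-)^{(B)}$ and $(X^+)^{(B)}$ is then immediate, since each is a measurable function of the (independent) fields $X^-$ and $X^+$ respectively.

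For (b), given that $Y$ is a leak for $X$, I would pick a split $(X^0, X^-, X^+)$ of $X$ whose leak $Y'$ is distributed like $Y$. By (a), $((X^0)^{(B)}, (X^-)^{(B)}, (X^+)^{(B)})$ is a split of $X^{(B)}$. The leak of this split is computed directly from Definition~\ref{1.2}(a): on any bounded Borel $A \subset (-\infty,0)\times\R^{d_1-1}$, one has $\int_A \((X^-)^{(B)}_s - (X^0)^{(B)}_s\) \D s = \int_{A\times B}(X^-_t - X^0_t) \, \D t = \int_A (Y')^{(B)}_s \, \D s$, since $Y'_t = X^-_t - X^0_t$ whenever $t_1 < 0$; the analogous identity holds on $[0,\infty)\times\R^{d_1-1}$. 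Thus the leak of the split constructed in (a) equals $(Y')^{(B)}$. Because $Y'$ is distributed like $Y$ and $\cdot^{(B)}$ is a deterministic measurable map, $(Y')^{(B)}$ is distributed like $Y^{(B)}$, so $Y^{(B)}$ is a leak for $X^{(B)}$ in the sense of Definition~\ref{1.2}(b).

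I expect no substantive obstacle here; the only point requiring any attention is the well-definedness and measurability of the integration-over-$B$ operation, which is immediate from the \sif\ description. The lemma is essentially a compatibility check between partial integration and the definitional apparatus of splits and leaks.
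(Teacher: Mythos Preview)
Your proof is correct and follows essentially the same approach as the paper, which simply records ``(a) Immediate from Def.~\ref{1.1}; (b) Immediate from (a) and Def.~\ref{1.2}.'' You have spelled out in full the compatibility checks (measurability, preservation of identical distribution and independence under the deterministic map $X\mapsto X^{(B)}$, and the computation that the leak of the pushed-forward split is $(Y')^{(B)}$) that the paper leaves implicit.
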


\begin{proof}
(a) Immediate from Def.~{1.1}.

(b) Immediate from (a) and Def.~{1.2}.
\end{proof}

Let $X$ be a \splittable{(1,k)} random field on $ \R^d $, and $ k<d $,
and $ \ell \in \{1,\dots,d-k\} $. We treat $ \R^d $ as $ \R^{d-\ell}
\times \R^\ell $, consider $ X^{(B)} $ for $ B = \R^\ell $ and denote
it $ X^{(\R^\ell)} $ or just $ X^{(\ell)} $; this random field on $
\R^{d-\ell} $ is well-defined, since for bounded Borel $ A \subset
\R^{d-\ell} $ the relation ``$ \int_{A\times\R^{d-\ell}} |X_t| \, \D t
< \infty $ a.s.'' follows easily from Def.~{1.3}(a).

By Lemma \ref{2.10}(b), for each $ \ell \in \{1,\dots,d-k\} $,
\begin{equation}\label{2.10a}
\text{if } Y \text{ is a leak for } X \, , \quad \text{then }
Y^{(\ell)} \text{ is a leak for } X^{(\ell)} \, .
\end{equation}

\begin{lemma}\label{2.11}
If $X$ is \splittable{(1,k)} and $ 1 \le \ell \le d-k $, then $
X^{(\ell)} $ is a \splittable{(1,k)} random field on $ \R^{d-\ell} $.
\end{lemma}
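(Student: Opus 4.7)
The plan is to induct on $k$, verifying the three clauses of Def.~\ref{1.3} for $X^{(\ell)}$. The two workhorses are the total-variation inequality $\int_A |X^{(\ell)}_s|\,\D s \le \int_{A\times\R^\ell} |X_t|\,\D t$ (together with Fubini), and the already established \eqref{2.10a} saying that leaks survive integration over a Borel set. The remaining task is to check that the transformations in Def.~\ref{1.3}(c) commute with the operation $X \mapsto X^{(\ell)}$; once that is clear, everything reduces to applying the induction hypothesis.

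For clause (a): $|X^{(\ell)}|$ is a measure of total mass at most $|X|(\cdot\times\R^\ell)$, so
\[
\int_{([0,1)^k+s)\times\R^{d-\ell-k}} |X^{(\ell)}_t|\,\D t \le
 \int_{([0,1)^k+s)\times\R^{d-k}} |X_t|\,\D t,
\]
and (a) for $X$ gives (a) for $X^{(\ell)}$. For clause (b): $\Ex \int_A X^{(\ell)}_s\,\D s = \Ex \int_{A\times\R^\ell} X_t\,\D t = 0$ by (b) for $X$; the integrability just established lets us use Fubini.

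For clause (c), fix $i\in\{1,\dots,k\}$ and $r\in\R$. Because the transformation in Def.~\ref{1.3}(c) acts only on coordinates $t_1,\dots,t_k$ while the integration in $X\mapsto X^{(\ell)}$ acts on $t_{d-\ell+1},\dots,t_d$ (and we have $k\le d-\ell$, so the two blocks are disjoint), a direct check gives
\[
\widetilde{X^{(\ell)}} = (\widetilde X)^{(\ell)}, \qquad (Y')^{\phantom{(}} = Y^{(\ell)},
\]
where on the right $\widetilde X$ and $Y$ are the objects produced by Def.~\ref{1.3}(c) applied to $X$, and on the left they are the analogous objects for $X^{(\ell)}$. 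By (c) for $X$, there is a leak $\widetilde Y$ for $\widetilde X$ such that the associated $Y$ is \splittable{(1,k-1)} when $k>1$, and satisfies $\Ex\exp\int_{\R^d}|Y_t|\,\D t\le 2$ when $k=1$. By \eqref{2.10a}, $\widetilde Y^{(\ell)}$ is a leak for $(\widetilde X)^{(\ell)}=\widetilde{X^{(\ell)}}$, and the corresponding $Y'$ for $X^{(\ell)}$ equals $Y^{(\ell)}$.

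It remains to verify the appropriate property of $Y^{(\ell)}$. If $k=1$, then $\int_{\R^{d-\ell}}|Y^{(\ell)}_t|\,\D t \le \int_{\R^d}|Y_t|\,\D t$, so the exponential bound transfers. If $k>1$, then $Y$ is \splittable{(1,k-1)} on $\R^d$, and since $\ell \le d-k \le d-(k-1)$ we may invoke the induction hypothesis (Lemma~\ref{2.11} with $k-1$ in place of $k$) to conclude that $Y^{(\ell)}$ is \splittable{(1,k-1)} on $\R^{d-\ell}$. The main subtlety is purely bookkeeping, namely keeping straight which block of coordinates is being permuted and which is being integrated; once one notes that these two operations commute, the induction goes through smoothly.
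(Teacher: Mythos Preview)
Your proof is correct and follows essentially the same approach as the paper's: induction on $k$, with clauses (a) and (b) handled via the total-variation inequality and Fubini, and clause (c) via the commutation $\widetilde{X^{(\ell)}} = (\widetilde X)^{(\ell)}$ together with \eqref{2.10a} and the induction hypothesis. The paper's version is just a slightly more explicit coordinate computation of the same commutation.
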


\begin{sloppypar}
\begin{proof}
Induction in $k=1,\dots,d-\ell$. We inspect Def.~\ref{1.3}(a,b,c).

(a) $ \int_{([0,1)^k+s)\times\R^{d-\ell-k}} |X_t^{(\ell)}| \, \D t =
\int_{([0,1)^k+s)\times\R^{d-\ell-k}} \, \D t \big| \int_{\R^\ell}
X_{t,r} \, \D r \big| \le \int_{([0,1)^k+s)\times\R^{d-k}} |X_t| \, \D
t $, thus $ \Ex \exp(\dots) \le 2 $.

(b) $ \Ex X_s^{(\ell)} = \Ex  \int_{\R^\ell} X_{s,r} \, \D r =
\int_{\R^\ell} \Ex X_{s,r} \, \D r = 0 $.

(c) Let $ i \in \{1,\dots,k\} $ and $ r \in \R $; we consider the
random field $ \widetilde{X^{(\ell)}} $:
$ \widetilde{X^{(\ell)}}_{t_1,\dots,t_{d-\ell}} =
X^{(\ell)}_{t_2,\dots,t_i,t_1+r,t_{i+1},\dots,t_{d-\ell}}
= \int_{\R^\ell} X_{t_2,\dots,t_i,t_1+r,t_{i+1},\dots,t_d} \D
t_{d-\ell+1} \dots \D t_d = \int_{\R^\ell} \ti X_{t_1,\dots,t_d} \, \D
t_{d-\ell+1} \dots \D t_d = \ti X^{(\ell)}_{t_1,\dots,t_{d-\ell}} $;
that is, $ \widetilde{X^{(\ell)}} = \ti X^{(\ell)} $. We have a leak
$ \ti Y $ of $ \ti X $; by \eqref{2.10a}, $ \ti Y^{(\ell)} $ is a leak
of $ \ti X^{(\ell)} $.
Now, \splittability{(1,k-1)} of $Y$ implies \splittability{(1,k-1)} of
$Y^{(\ell)}$ by the induction hypothesis, provided that $k>1$; and $
Y^{(\ell)}_{t_1,\dots,t_{d-\ell}} = \int_{\R^\ell}Y_{t_1,\dots,t_d} \,
\D t_{d-\ell+1}\dots\D t_d = \int_{\R^\ell} \ti
Y_{t_k,t_1,\dots,t_{k-1},t_{k+1},\dots,t_d \, \D t_{d-\ell+1}} \dots
\D t_d = \ti
Y^{(\ell)}_{t_k,t_1,\dots,t_{k-1},t_{k+1},\dots,t_{d-\ell}} $, as
required. Otherwise, if $ k=1 $, we have $ \int_{\R^{d-\ell}}
| Y_t^{(\ell)} | \, \D t = \int_{\R^{d-\ell}} \D t \big|
\int_{\R^\ell} Y_{t,r} \, \D r \big| \le \int_{\R^d} |Y_t| \, \D t $,
thus $ \Ex \exp (\cdot) \le 2 $.
\end{proof}
\end{sloppypar}

\begin{corollary}\label{2.12}
If $X$ is \splittable{(C,k)} on $ \R^d $ and $ k < d $, then $
X^{(d-k)} $ is \splittable{C} on $ \R^k $.
\end{corollary}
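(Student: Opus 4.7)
The plan is to reduce the statement to Lemma \ref{2.11} by rescaling. By Definition \ref{1.4}(a), saying that $X$ is \splittable{(C,k)} is, by construction, the same as saying that $\frac1C X$ is \splittable{(1,k)}, so I would start from the rescaled field $\frac1C X$ on $\R^d$ and work exclusively with the scale-$1$ notion until the very end.

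Next I would apply Lemma \ref{2.11} to $\frac1C X$ with $\ell = d-k$. The hypothesis $1 \le \ell \le d-k$ of that lemma becomes $1 \le d-k \le d-k$, which holds because $k<d$. The conclusion is that $\bigl(\frac1C X\bigr)^{(d-k)}$ is \splittable{(1,k)} on $\R^{d-(d-k)} = \R^k$.

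The only thing left is to identify $\bigl(\frac1C X\bigr)^{(d-k)}$ with $\frac1C X^{(d-k)}$; this linearity is immediate from the definition $X_s^{(B)} = \int_B X_{s,t}\,\D t$, since integration against Lebesgue measure is linear in the integrand LFSBM. Thus $\frac1C X^{(d-k)}$ is \splittable{(1,k)} on $\R^k$, which by Definition \ref{1.4}(a) means $X^{(d-k)}$ is \splittable{(C,k)} on $\R^k$; since the ambient dimension of $X^{(d-k)}$ is exactly $k$, Definition \ref{1.4}(b) rephrases this as $X^{(d-k)}$ being \splittable{C} on $\R^k$, as required.

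There is no real obstacle here: the work was already done in Lemma \ref{2.11}, and the corollary is merely a bookkeeping translation between the $(1,k)$- and $C$-splittability formulations, parallel in style to Corollaries \ref{2.3}, \ref{2.6}, and \ref{2.9} above.
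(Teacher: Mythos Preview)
Your proof is correct and follows exactly the paper's approach: apply Lemma~\ref{2.11} with $\ell=d-k$ to $\frac1C X$, then use Definition~\ref{1.4}(a,b) to translate $(1,k)$-splittability on $\R^k$ into $C$-splittability. The paper's version is just a one-line compression of what you wrote out in full.
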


\begin{proof}
Lemma \ref{2.11} for $ \ell = d-k $ gives \splittability{(1,k)} of $
\frac1C X^{(d-k)} $ on $ \R^k $.
\end{proof}

This fact generalizes readily, as follows.

\begin{corollary}\label{2.12a}
Let $X$ be \splittable{(C,k)} on $ \R^d $, $ k < d $, and $ B \subset
\R^{d-k} $ a Borel measurable set. Then $ X^{(B)} $ is \splittable{C}
on $ \R^k $.
\end{corollary}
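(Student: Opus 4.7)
The plan is to reduce Corollary~\ref{2.12a} to Corollary~\ref{2.12} by inserting an indicator of $B$ as a test function, exploiting the fact that multiplication by a bounded measurable function preserves \splittability{(1,k)} (Prop.~\ref{2.1}).

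First I would define $\phi : \R^d \to [0,1] \subset [-1,1]$ by $\phi(s,t) = \One_B(t)$ for $s \in \R^k$, $t \in \R^{d-k}$; since $B$ is Borel, $\phi$ is Borel measurable. Since $X$ is \splittable{(C,k)}, the random field $\frac{1}{C}X$ is \splittable{(1,k)}, so by Prop.~\ref{2.1} the random field $\phi \cdot \frac{1}{C}X = \frac{1}{C}(\phi X)$ is \splittable{(1,k)}, i.e.\ $\phi X$ is \splittable{(C,k)}.

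Next I would invoke Corollary~\ref{2.12} applied to $\phi X$ (with the same $k<d$), obtaining that $(\phi X)^{(d-k)}$ is \splittable{C} on $\R^k$. It remains only to identify this field with $X^{(B)}$: for every $s \in \R^k$,
\[
(\phi X)^{(d-k)}_s = \int_{\R^{d-k}} \phi(s,t)\, X_{s,t} \, \D t = \int_{\R^{d-k}} \One_B(t)\, X_{s,t} \, \D t = \int_B X_{s,t} \, \D t = X^{(B)}_s \, ,
\]
so $X^{(B)} = (\phi X)^{(d-k)}$ is \splittable{C} on $\R^k$, as required.

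There is no real obstacle here — the one point worth verifying is that Prop.~\ref{2.1} truly applies to $\phi(s,t)=\One_B(t)$ viewed as a function on the full product $\R^d$ (not just on the $\R^{d-k}$ factor), but this is immediate from Borel measurability of $\R^k \times B$ in $\R^d$. Everything else is just chasing definitions through the already-established machinery.
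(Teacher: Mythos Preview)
Your proof is correct and follows exactly the same approach as the paper: multiply $X$ by the indicator $\One_{\R^k\times B}$ (your $\phi$), invoke Prop.~\ref{2.1} to preserve \splittability{(C,k)}, then apply Corollary~\ref{2.12} and identify $(\phi X)^{(d-k)}$ with $X^{(B)}$. The paper's proof is the same argument compressed into two lines.
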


\begin{proof}\!\footnote{%
 $ \One_A $ stands for the indicator function of a set $A$; it equals
 $1$ within $A$ and $0$ outside $A$.}
$ X \cdot \One_{\R^k \times B} $ is \splittable{(C,k)} on $ \R^d $ by
Prop.~\ref{2.1}, and $ X^{(B)} = ( X \cdot \One_{\R^k \times B}
)^{(d-k)} $.
\end{proof}

\begin{lemma}\label{2.12b}\!\footnote{%
 To be used in Sect.~\ref{sect5}.}
Let $X$ be \splittable{C} on $ \R^d $. Then $ X $ has a leak $ Y $
such that the random field $ Z $ on $ \R^{d-1} $ defined by $
Z_{t_1,\dots,t_{d-1}} = \int_{\R} Y_{s,t_1,\dots,t_{d-1}} \, \D s
$ is \splittable{C} (on $ \R^{d-1} $).
\end{lemma}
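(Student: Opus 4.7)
The plan is to extract a leak $Y$ of $X$ from Definition~\ref{1.3}(c) applied to $\frac{1}{C}X$ with a trivial shift, and then recognize the integrated field $Z$ as the marginal (in the sense of the $X^{(\ell)}$ construction) of a cyclically permuted version of this leak, to which Corollary~\ref{2.12} applies.

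Since $X$ is \splittable{C}, the field $\frac{1}{C}X$ is \splittable{(1,d)}. I apply Definition~\ref{1.3}(c) with $k=d$, $i=1$ and $r=0$: then $\be_{1,0}$ is the identity, so $\ti X = \frac{1}{C}X$ itself, and the definition delivers a leak of $\frac{1}{C}X$ whose image under the cyclic shift $\ga_d : (t_1,\dots,t_d)\mapsto(t_d,t_1,\dots,t_{d-1})$ is \splittable{(1,d-1)} on $\R^d$. Multiplying by the constant $C$ and invoking Lemma~\ref{2.2}(c), one obtains a leak $Y$ of $X$ itself such that $Y\circ\ga_d$ is \splittable{(C,d-1)} on $\R^d$.

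Since $(Y\circ\ga_d)_{t_1,\dots,t_d} = Y_{t_d,t_1,\dots,t_{d-1}}$, integrating out the last coordinate gives
\[
(Y\circ\ga_d)^{(1)}_{t_1,\dots,t_{d-1}} = \int_{\R} Y_{s,t_1,\dots,t_{d-1}} \, \D s = Z_{t_1,\dots,t_{d-1}},
\]
so $Z = (Y\circ\ga_d)^{(1)}$. Applying Corollary~\ref{2.12} with $k = d-1$ to the \splittable{(C,d-1)} field $Y\circ\ga_d$ on $\R^d$ immediately yields that $Z$ is \splittable{C} on $\R^{d-1}$, as required.

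The only delicate point is the bookkeeping for the cyclic permutation $\ga_d$: Definition~\ref{1.3}(c) produces a \splittable{(1,d-1)} field only \emph{after} the shift moving the leak's switching coordinate from position~$1$ to position~$d$, which is exactly what is needed so that Corollary~\ref{2.12}, which integrates out the \emph{last} coordinate, delivers precisely the field $Z$ defined by integrating $Y$ over the \emph{first} coordinate.
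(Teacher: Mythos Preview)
Your proof is correct and follows essentially the same route as the paper's: apply Def.~\ref{1.3}(c) with $k=d$, $i=1$, $r=0$ to obtain a leak whose cyclic permute is \splittable{(C,d-1)}, then recognize $Z$ as the $(\cdot)^{(1)}$ marginal of that permute and invoke Corollary~\ref{2.12}. The only cosmetic differences are that the paper normalizes to $C=1$ at the outset (while you scale via Lemma~\ref{2.2}(c)), and the paper names the leak $\ti Y$ and the permuted field $Y$, whereas you keep $Y$ for the leak (matching the statement) and write the permuted field as $Y\circ\ga_d$.
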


\begin{proof}
WLOG, $ C=1 $ (otherwise divide $ X, Y, Z $ by
$C$). Def.~\ref{1.3}(c) for $ k=d $, $ i=1 $ and $ r=0 $ gives a leak
$ \ti Y $ for $ \ti X = X $ such that the random field $ Y $ defined
by $ Y_{t_1,\dots,t_d} = \ti Y_{t_d,t_1,\dots,t_{d-1}} $ is
\splittable{(1,d-1)}. We have $ Z_{t_1,\dots,t_{d-1}} = \int_{\R} \ti
Y_{s,t_1,\dots,t_{d-1}} \, \D s = \int_{\R}
Y_{t_1,\dots,t_{d-1},s} \, \D s $, that is, $ Z = Y^{(1)} $. We
apply to $Y$ Corollary \ref{2.12} with $ k = d-1 $ (and $ C=1 $).
\end{proof}

\begin{remark}\label{2.12c}
Once again, the same holds for $Z$ defined by $
Z_{t_1,\dots,t_{d-1}} = \int_B Y_{s,t_1,\dots,t_{d-1}} \, \D s $
whenever $ B \subset \R $ is a Borel measurable set. (Just use
Corollary \ref{2.12a} rather than \ref{2.12}; now $ Z = Y^{(B)} $.)
\end{remark}

\begin{proposition}\label{2.13}
Let $X$ be a \splittable{C} random field on $ \R^d $, $ d>1 $, $ B
\subset \R^{d-1} $ a box, and $ r>0 $. Then there exist random
variables $ U,V,W,Z $ (on some probability space) and a \splittable{C}
random field $Y$ on $ \R^{d-1} $ such that $U$, $V$ are (mutually)
independent, $ W + Z = U + V $, and
\[
\begin{aligned}
U &\sim \int_{[-r,0)\times B} X_t \, \D t \, , \qquad \text{(here ``$\sim$''
  means ``distributed as'')} \\
V &\sim \int_{[0,r)\times B} X_t \, \D t \, , \\
W &\sim \int_{[-r,r)\times B} X_t \, \D t \, , \\
Z &\sim \int_B Y_s \, \D s \, .
\end{aligned}
\]
\end{proposition}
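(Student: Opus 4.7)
The plan is to build $U$, $V$, $W$, $Z$ directly from a carefully chosen split of $X$, choosing the split so that the ``leak part'' $Z$ automatically arises as $\int_B$ of a splittable-$C$ field on $\R^{d-1}$.

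First I would invoke Remark \ref{2.12c} (the Borel-set extension of Lemma \ref{2.12b}) with the set $[-r,r)\subset\R$ plugged in for $B$. This picks out a leak $\widehat Y$ for $X$, realized on some $\ti\Om$ as the leak of a specific split $(X^0,X^-,X^+)$ of $X$, together with the guarantee that the random field on $\R^{d-1}$ defined by
\[
Y_{t_1,\dots,t_{d-1}} := \int_{[-r,r)} \widehat Y_{s,t_1,\dots,t_{d-1}} \, \D s
\]
is splittable-$C$. This $Y$ will be the splittable-$C$ random field promised by the proposition.

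On the same $\ti\Om$ I would then set
\[
U := \int_{[-r,0)\times B} X^-_t \, \D t , \quad
V := \int_{[0,r)\times B} X^+_t \, \D t , \quad
W := \int_{[-r,r)\times B} X^0_t \, \D t ,
\]
and $Z := \int_B Y_s \, \D s$, which by Fubini equals $\int_{[-r,r)\times B} \widehat Y_t \, \D t$. The stated distributions of $U$, $V$, $W$ are immediate from the fact that $X^0$, $X^-$, $X^+$ are each distributed like $X$ (Def.~\ref{1.1}), and the independence of $U$ and $V$ is inherited from that of $X^-$ and $X^+$.

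It remains to check the algebraic identity $W+Z=U+V$. By Def.~\ref{1.2}(a) applied to the chosen split we have $X^0_t = X^-_t - \widehat Y_t$ whenever $t_1<0$ and $X^0_t = X^+_t - \widehat Y_t$ whenever $t_1\ge 0$; splitting the integral defining $W$ at $t_1=0$ and substituting these two formulas on the respective halves of $[-r,r)\times B$ gives exactly $W = U+V-Z$. The only genuinely nontrivial step is the first one: without the guarantee from Lemma \ref{2.12b} and Remark \ref{2.12c} that the integrated leak is itself splittable-$C$, there would be no candidate for $Y$; once the ``right'' leak is in hand, the remainder is bookkeeping.
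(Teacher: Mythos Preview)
Your proof is correct and follows essentially the same approach as the paper's: both construct $U,V,W,Z$ from a split $(X^0,X^-,X^+)$ whose leak integrates to a \splittable{C} field on $\R^{d-1}$, then verify $W+Z=U+V$ from Def.~\ref{1.2}(a). The only cosmetic difference is that the paper first restricts $X$ to $[-r,r)\times\R^{d-1}$ (via Corollary~\ref{2.3}) and then invokes Def.~\ref{1.3}(c) and Corollary~\ref{2.12} directly, whereas you invoke the packaged Remark~\ref{2.12c} with the set $[-r,r)$; these are equivalent routes to the same leak.
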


\begin{sloppypar}
\begin{proof}
WLOG, $ X_{t_1,\dots,t_d} = 0 $ whenever $ t_1 \notin [-r,r) $; otherwise we
multiply $X$ by $ \One_{[-r,r)\times\R^{d-1}} $ and use Corollary
\ref{2.3}. By Item (c) of Def.~\ref{1.3} for $ k=d $, $ i=1 $ and $
r=0 $, $ \frac1C X $ has a leak $ \frac1C \ti Y $ such that the random field $ (\ti
Y_{t_d,t_1,\dots,t_{d-1}})_{t_1,\dots,t_d\in\R} $ is
\splittable{(C,d-1)}. By Corollary \ref{2.12} for $ k=d-1 $, the
random field $ Y = ( \int_{\R} \ti Y_{s,t_1,\dots,t_{d-1}} \, \D s
)_{t_1,\dots,t_{d-1}\in\R} = ( \int_{\R} \ti Y_{t_1,\dots,t_d} \, \D
t_1 )_{t_2,\dots,t_d\in\R} $ is \splittable{C} on $ \R^{d-1} $. We
take a split $(X^0,X^-,X^+)$ of $X$ whose leak is $\ti Y$, and
introduce
\vspace{-1pt}
\[
\begin{aligned}
U &= \int_{[-r,0)\times B} X_t^- \, \D t \, , \\
V &= \int_{[0,r)\times B} X_t^+ \, \D t \, , \\
W &= \int_{[-r,r)\times B} X_t^0 \, \D t \, , \\
Z &= \int_{[-r,r)\times B} \ti Y_t \, \D t = \int_B Y_s \, \D s \, .
\end{aligned}
\vspace{-1pt}
\]
Independence of $ X^-, X^+ $ implies independence of $ U,V $.
The equalities $ \int_{[-r,0)\times B} ( X_t^- - X_t^0 ) \, \D t =
\int_{[-r,0)\times B} \ti Y_t \, \D t $ and $ \int_{[0,r)\times B} (
X_t^+ - X_t^0 ) \, \D t = \int_{[0,r)\times B} \ti Y_t \, \D t $ imply
$ U+V-W=Z $.
\end{proof}
\end{sloppypar}

\begin{remark}\label{2.19}
For $ d=1 $ we have no $B$, no $Y$, but $ U \sim \int_{[-r,0)} X_t \,
\D t $, $ V \sim \int_{[0,r)} X_t \, \D t $, $ W \sim \int_{[-r,r)}
X_t \, \D t $, and $ \Ex \exp |Z| \le 2 $, and $ \Ex Z = 0 $.

In the proof: $Y$ is just a random variable $ \int_\R \ti Y_t \, \D t
$, and $ \Ex \exp \int_{\R} |\ti Y_t| \, \D t \le 2 $; $ Z = Y $; $
|Z| \le \int_{\R} | \ti Y_t | \, \D t $; thus $ \Ex \exp |Z| \le 2 $.
\end{remark}

\section[Upper bounds: abstract nonsense]
  {\raggedright Upper bounds: abstract nonsense}
\label{sect3}
We consider functions $ f : \cup_{d=0}^\infty \( \R \times (0,+\infty)^d \) \to
[0,+\infty] $, where $ \R \times (0,+\infty)^0 $ means just $ \R
$.\,\footnote{%
 The sets $ \R \times (0,+\infty)^d $ are pairwise disjoint, of course.}

Informally, $ f(\la; r_1,\dots,r_d) $ is intended to be an upper bound
on the cumulant generating function
\[
\log \Ex \exp \frac{\la}{\sqrt{r_1\dots
r_d}} \int_{[0,r_1)\times\dots\times[0,r_d)} X_t \, \D t
\]
for a class of random fields $ (X_t)_{t\in\R^d} $.
Invariance (in distribution) under permutations (of coordinates $
t_1,\dots,t_d $) and shifts, not required of a random field, is
required of the class of random fields, which motivates the first
condition on $f$:
\begin{multline}\label{3.1}
f(\la; r_1,\dots,r_d) = f(\la; r_{k_1},\dots,r_{k_d}) \\
\text{whenever } (k_1,\dots,k_d) \text{ is a permutation of } (1,\dots,d) \, .
\end{multline}
The second condition on $f$ (``duplication inequality'') is more
complicated; for its motivation see Prop.~\ref{4.1aa}:
\begin{equation}\label{3.2}
f(\la; r_1,\dots,r_d) \le \frac2p
f \Big( \frac{p\la}{\sqrt2}; \frac{r_1}2, r_2, \dots, r_d \Big)
+ \frac{p-1}p f \Big( -\frac{p}{p-1} \frac{\la}{\sqrt{r_1}};
r_2,\dots,r_d \Big)
\end{equation}
whenever $ p \in (1,\infty) $.

Here is the third (last) condition on $f$; for its motivation
see Prop.~\ref{4.2}:
\begin{multline}\label{3.3}
\forall d \in \{1,2,\dots\} \;\; \forall C \in [1,\infty) \;\; \exists \eps>0 \\
\forall r_1,\dots,r_d \in [\tfrac12 C, C] \;\;
  \forall \la \in [-\eps,\eps] \quad \eps^2 f(\la; r_1,\dots,r_d) \le \la^2 \, .
\end{multline}

More formally, for each $ d \in \{0,1,2,\dots\} $ we introduce
condition $ \A_d $ as follows. For $ d \ne 0 $ condition $ \A_d $ is
the conjunction of \eqref{3.1}, \eqref{3.2} and \eqref{3.3} (for the
given $d$). And condition $ \A_0 $ is just
\begin{equation}\label{3.4}
\exists \eps>0 \;\; \forall \la \in [-\eps,\eps] \quad \eps^2 f(\la) \le \la^2 \, .
\end{equation}

\begin{theorem}\label{II-1.5}
If $f$ satisfies condition $ \A_d $ for all $d$, then for each $d$
there exists $C_d$ such that $ f(\la;r_1,\dots,r_d) \le C_d \la^2 $
whenever $ C_d |\la| \le \sqrt{r_1\dots r_d} \log^{-d} (r_1\dots r_d)
$ and $ \min(r_1,\dots,r_d) \ge C_d $.\,\footnote{%
 $ \, \log^{-d} (\dots) $ means $ 1/\(\log(\dots))^d $, of course.}
\end{theorem}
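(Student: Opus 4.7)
I would prove this by induction on $d$. The base case $d = 0$ follows directly from \eqref{3.4}: $f(\la) \le \la^2/\eps^2$ on $|\la| \le \eps$, so $C_0 := \max(1/\eps,\,1/\eps^2)$ works. For the inductive step, assume the theorem holds in dimension $d-1$ with constant $C_{d-1}$, and fix $r_1, \dots, r_d$ and $\la$ in the hypothesised range. The main engine is the duplication inequality \eqref{3.2}, iterated on the first coordinate with parameters $p_1, \dots, p_k$; the permutation invariance \eqref{3.1} then lets me rotate each other coordinate into first position and apply the same procedure. Writing $P_j = p_1 \cdots p_j$, a direct telescoping yields
\[
f(\la; r_1, \dots, r_d) \le \frac{2^k}{P_k}\,f\Big(\tfrac{P_k\la}{2^{k/2}};\tfrac{r_1}{2^k}, r_2, \dots, r_d\Big) + \sum_{j=1}^{k} \frac{2^{j-1}(p_j-1)}{p_j P_{j-1}}\,f\Big(-\tfrac{P_j\la}{(p_j-1)\sqrt{r_1}}; r_2, \dots, r_d\Big),
\]
and by iterating coordinate by coordinate I can reduce every $r_i$ down into a ``base window'' $[\tfrac12 M, M]$ to which \eqref{3.3} applies. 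The surviving $d$-dimensional main term is then controlled by \eqref{3.3}, and each extracted $d-1$-dimensional term is controlled by the induction hypothesis.

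The crux is the choice of $p_j$. The naive choice $p_j \equiv 2$ fails: the extracted arguments $\mu_j = -2^j\la/\sqrt{r_1}$ grow geometrically and fall out of the $d-1$-dim $\la$-window. I would instead set $p_j = 1 + \delta_j$ and choose $\{\delta_j\}$ to satisfy three competing constraints simultaneously: (i) $\sum_j \delta_j = O(1)$, which keeps $P_k$ and the main-term coefficient bounded; (ii) $|\mu_j| = P_j|\la|/((p_j-1)\sqrt{r_1}) \le \sqrt{r_2\cdots r_d}/(C_{d-1}\log^{d-1}(r_2\cdots r_d))$ for every $j$, permitting the induction hypothesis; and (iii) $\sum_j 2^{j-1}P_j/((p_j-1)r_1) = O(1)$, so that summing the $d-1$-dim bounds on the extracted terms yields $O(\la^2)$. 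A short Cauchy--Schwarz computation identifies the unconstrained minimiser of (iii) under (i) as $\delta_j \propto 2^{(j-1)/2}$; to enforce (ii) I would truncate this schedule from below by a quantity of order $|\la|\log^{d-1}(r_2\cdots r_d)/\sqrt R$, where $R = r_1\cdots r_d$.

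The main obstacle is precisely constraint (ii): it is here that the hypothesis $C_d|\la|\le\sqrt R/\log^d R$ is consumed, losing one power of $\log R$ per induction step through the trivial bound $\log(r_2\cdots r_d)\le \log R$. Once (i)--(iii) are in force, summing the main-term estimate via \eqref{3.3} with the extracted-term estimates via the induction hypothesis gives $f(\la;r_1,\dots,r_d)\le K\la^2$ for a constant $K$ depending on $C_{d-1}$, $\eps$, and $M$; setting $C_d := K$, enlarged if necessary so that the hypotheses $\min_i r_i \ge C_d$ and $C_d|\la| \le \sqrt R/\log^d R$ subsume all intermediate smallness conditions imposed in the iteration, closes the induction.
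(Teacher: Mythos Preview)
Your high-level strategy coincides with the paper's: both prove the theorem by induction on $d$, phrasing the conclusion as a condition $\B_d$ and establishing the implication $(\A_d \wedge \B_{d-1}) \Rightarrow \B_d$ (Prop.~\ref{3.6}). The base case $\B_0 \Leftrightarrow \A_0$ is immediate, and the inductive engine is exactly the iterated duplication inequality \eqref{3.2} with the extracted $(d{-}1)$-dimensional terms controlled by $\B_{d-1}$ --- this is the content of Lemma~\ref{3.8}.

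Where you diverge is in the technical organisation of the iteration. The paper does \emph{not} carry out the iteration explicitly here: it observes that Lemma~\ref{3.8} is the only input needed to rerun the proof of \cite[Prop.~2.6]{II}, yielding Prop.~\ref{3.9a}; combines this with \eqref{3.3} to obtain the intermediate Prop.~\ref{II-2.4} (the bound under the \emph{weaker} constraint $C|\la|\le\sqrt{S(v)}\log^{-(d-1)}v$, with $S(v)=v^{(d-1)/d}$); and then imports a further chain of results from \cite{II} (Cor.~3.3--3.5, Lemmas~3.9--3.16 there) to bootstrap from the $\sqrt{S(v)}$-range to the full $\sqrt{v}\log^{-d}v$-range. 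So the paper's route is genuinely two-stage, and the delicate choice of parameters lives entirely inside \cite{II}.

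Your route is a direct one-stage iteration: reduce each coordinate to a base window in turn, with a truncated geometric schedule $\de_j=\max\bigl(c,\,a\cdot 2^{j/2}\bigr)$. This is viable --- the compatibility of your constraints (i)--(iii) comes down to $k\cdot c\lesssim 1$, i.e.\ $(\log r_1)\cdot|\la|\log^{d-1}(r_2\cdots r_d)/\sqrt R\lesssim 1$, and since $\log r_1\cdot\log^{d-1}(r_2\cdots r_d)\le(\log R)^d$ this is exactly where the hypothesis $|\la|\lesssim\sqrt R/\log^d R$ is spent, as you say. What your sketch leaves implicit is (a) that the coefficient $\prod P^{(i)}_{k_i}$ accumulated over all $d$ coordinate reductions stays bounded, so that the final main term times \eqref{3.3} gives $O(\la^2)$, and (b) that after reducing $r_1$ to $M$ the subsequent extracted terms $f(\,\cdot\,;M,r_3,\dots,r_d)$ still satisfy $\min\ge C_{d-1}$, which forces $M\ge C_{d-1}$. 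Both are routine to arrange. Your approach is arguably more transparent than threading through \cite{II}'s two-stage machinery; the paper's buys modularity and a citable black box.
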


This is basically Theorem 1.5 of \cite{II}, somewhat reformulated and
generalized. For convenience, for each $ d \in \{0,1,2,\dots\} $ we
introduce condition $ \B_d $ as follows.
For $ d \ne 0 $, condition $ \B_d $ requires existence of $ C_d $ such
that $ f(\la;r_1,\dots,r_d) \le C_d \la^2 $ whenever $ C_d
|\la| \le \sqrt{r_1\dots r_d} \log^{-d} (r_1\dots r_d) $ and
$ \min(r_1,\dots,r_d) \ge C_d $. For $ d=0 $, condition $ \B_0 $
requires existence of $ C_0 $ such that $ f(\la) \le C_0 \la^2 $
whenever $ C_0 |\la| \le 1 $. Note that $ \B_0 \equi \A_0 $.

Thus, Theorem \ref{II-1.5} says that
$ ( \forall d \; \A_d ) \impl ( \forall d \; \B_d ) $.
Induction in $ d $, started with the trivial relation
$ \A_0 \impl \B_0 $, reduces Theorem \ref{II-1.5} to the following
result (to be proved later).

\begin{proposition}\label{3.6}
For each $ d \in \{1,2,\dots\} $ holds $ ( \A_d \land \B_{d-1}
) \impl \B_d $.
\end{proposition}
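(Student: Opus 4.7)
The plan is to reduce $(r_1,\ldots,r_d)$ down to a fixed base scale $C^{\star}$ by repeated application of the duplication inequality \eqref{3.2}, using the permutation symmetry \eqref{3.1} to rotate each coordinate into the first slot in turn. The $(d-1)$-dimensional residual terms produced by each use of \eqref{3.2} are bounded by the inductive hypothesis $\B_{d-1}$; after all $d$ coordinates have been reduced, the remaining ``base-case'' main term has all its arguments in $[C^{\star}/2, C^{\star}]$, so the local quadratic bound \eqref{3.3} applies and finishes the job.

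Fix $C^{\star}$ large enough that $C^{\star}\ge C_{d-1}$ (so that $\B_{d-1}$ is applicable when one or more coordinates of a $(d-1)$-tuple has been reduced down to $C^{\star}$), and let $\eps^{\star}$ be the constant provided by \eqref{3.3} at scale $C^{\star}$. A single round acting on the current first coordinate $r_i$ proceeds as follows: choose a parameter $p>1$ close to $1$ and an iteration count $n$, and apply \eqref{3.2} $n$ times, obtaining
\[
 f(\la;r_i, \ldots) \;\le\; \Big(\tfrac{2}{p}\Big)^{n} f\Big((p/\sqrt 2)^{n}\la;\,r_i/2^{n},\ldots\Big) \;+\; R,
\]
where $R$ is a geometric sum of $n$ residuals in $d-1$ variables of the form $(2/p)^{k}\tfrac{p-1}{p}\,f(\mu_{k};\,\text{the other }d-1\text{ coords})$ with $|\mu_{k}| \sim p^{k+1}|\la|/((p-1)\sqrt{r_i})$. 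Applying $\B_{d-1}$ to each residual and summing the geometric series gives
\[
 R \;\le\; \mathrm{const}\cdot\tfrac{p}{p-1}\cdot\tfrac{(2p)^{n}}{r_i}\cdot C_{d-1}\la^{2}.
\]
After the round, one permutes coordinates via \eqref{3.1} to bring the next still-large $r_{j}$ into the first slot and repeats; after $d$ rounds, all coordinates sit at $C^{\star}$ and the remaining main term is bounded by $\la_{d}^{2}/(\eps^{\star})^{2}$ via \eqref{3.3}, where $\la_{d} = \prod_i(p_i/\sqrt 2)^{n_i}\la$ is the cumulatively rescaled argument.

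\textbf{Main obstacle.} The crux is calibrating $p$ and $n$ in each round so that: (i) the main-term coefficient $(2/p)^{n}$ and the $\la$-rescaling $(p/\sqrt 2)^{n}\sqrt{r_{i}/C^{\star}}$ stay of order $1$; (ii) the residual factor $(p/(p-1))\cdot(2p)^{n}/r_{i}$ absorbs at most one extra logarithmic factor per round; and (iii) the argument $\mu_{k}$ of each residual satisfies the domain hypothesis of $\B_{d-1}$. The balanced choice $n = \lceil\log_{2}(r_{i}/C^{\star})\rceil$ and $p = 1+\Theta(1/\log r_{i})$ achieves this: it makes $(2/p)^{n}$, $p^{n}$, and $(2p)^{n}/r_{i}$ all $O(1)$, while $p/(p-1) = O(\log r_{i})$. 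Each round therefore contributes a residual of order $\log r_{i}\cdot C_{d-1}\la_{i-1}^{2}/C^{\star}$; the successive rescalings make $\la_{i-1}^{2}\le\la^{2}(C^{\star})^{i-1}/(r_{1}\cdots r_{i-1})$, and the $d$ accumulated factors of $\log r_{i}$ are exactly those absorbed by the slack $\log^{-d}(r_{1}\cdots r_{d})$ in the hypothesis $C_{d}|\la|\le\sqrt{r_{1}\cdots r_{d}}\log^{-d}(r_{1}\cdots r_{d})$, which simultaneously guarantees that every $|\mu_{k}|$ lies in the range where $\B_{d-1}$ applies. Summing the $d$ residual contributions with the \eqref{3.3}-bound on the final main term yields $f(\la;r_{1},\ldots,r_{d})\le C_{d}\la^{2}$ for some $C_{d}$ explicitly determined by $C^{\star}$, $\eps^{\star}$, $C_{d-1}$, and the constants of the geometric-series estimates, which proves $\B_{d}$.
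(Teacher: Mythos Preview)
There is a genuine gap. With your parameter choices the residual contributions sum only to $O(\log v)\,\la^2$ (where $v=r_1\cdots r_d$), not to $O(\la^2)$, so $\B_d$ --- which demands a single constant $C_d$ independent of $v$ --- does not follow. Concretely: with $p=1+\Theta(1/\log r_i)$ and $n_i=\lceil\log_2(r_i/C^\star)\rceil$ one has $p^{n_i}=O(1)$, $(2p)^{n_i}/r_i=O(1)$, and $p/(p-1)=\Theta(\log r_i)$, so the round-$i$ residual, once rewritten in the original $\la$ via $\prod_{j<i}(2/p_j)^{n_j}\cdot\la_{i-1}^2=\prod_{j<i}p_j^{n_j}\cdot\la^2=O(\la^2)$, is of order $(\log r_i)\,C_{d-1}\la^2$; summing over $i=1,\dots,d$ gives $O(\log v)\,C_{d-1}\la^2$. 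The $\log^{-d}v$ appearing in the hypothesis of $\B_d$ restricts the \emph{domain} of $\la$; it cannot absorb a $v$-dependent factor in front of $\la^2$. (It does, as you say, guarantee that each $\mu_k$ lies in the range where $\B_{d-1}$ applies --- but that is necessary, not sufficient.) Incidentally, your assertion that $(2/p)^{n}=O(1)$ is also wrong --- it equals $2^n/p^n=\Theta(r_i/C^\star)$ --- though this error happens to cancel harmlessly against $\la_{i-1}^2$ in the main term.

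The paper's route is two-staged and substantially different. First, Lemma~\ref{3.8} absorbs $\B_{d-1}$ once, giving a residual $C_{d-1}\frac{p}{p-1}\frac{\la^2}{2r_1}$; then \cite[Prop.~2.6]{II} (restated here as Prop.~\ref{II-2.6}/\ref{3.9a}) iterates this with $p$ tuned at \emph{each doubling} rather than fixed per round. Optimizing $ap+\frac{C_{d-1}}{2r}\frac{p}{p-1}$ over $p$ produces the recursion $\sqrt{a_{\text{new}}}\le\sqrt{a_{\text{old}}}+\sqrt{C_{d-1}/(2r)}$; always halving the largest side makes $r\ge R(v)=v^{1/d}$, so these increments form a convergent geometric series (ratio $2^{-1/(2d)}$) and the constant stabilizes at $2a$ instead of diverging logarithmically. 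This yields Prop.~\ref{II-2.4}: a \emph{uniform} bound $f_{v,C}(\la)\le C\la^2$, but only on the range $C|\la|\le\sqrt{S(v)}\,\log^{-(d-1)}v$. The second stage --- extending from $\sqrt{S(v)}\log^{-(d-1)}$ to $\sqrt v\,\log^{-d}$ --- is the subsequent $f_v$-recursion lemma together with the chain \cite[Cor.~3.3--3.5, Lemmas~3.9--3.16, Prop.~3.6]{II}, invoked here verbatim. Your single-pass scheme with fixed $p$ per round reproduces neither stage.
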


It is possible to translate nearly all calculations in \cite{II} from
the language of $ f_B(\la) $ used there into the language of $
f(\la;r_1,\dots,r_d) $ used here. Alternatively, we define $ f_B(\la)
$ in our terms:
\begin{equation}\label{3.7}
f_B(\la) = f(\la;\be_1-\al_1,\dots,\be_d-\al_d) \quad \text{for } B =
[\al_1,\be_1] \times \dots \times [\al_d,\be_d] \, .
\end{equation}

We rewrite Lemma 2.4 of \cite{II} in our terms and prove it in our
framework. For simplicity we formulate it as a duplication inequality
for the first coordinate, but it generalizes readily to $k$-th
coordinate ($k=1,\dots,d$) due to \eqref{3.1}.

\begin{lemma}\label{3.8}
Let $f$ satisfy conditions $ \A_d $ and $ \B_{d-1} $ for a given $
d \ge 1 $; $ r_1 \in (0,\infty) $; $ r_2,\dots,r_d \in
[C_{d-1},\infty) $. Then for all $ p \in (1,\infty) $ and $ \la \in \R
$ satisfying $ C_{d-1}
|\la| \le \frac{p-1}{p} \sqrt{2v} \log^{-(d-1)} \frac{v}{r_1} $, where
$ v = r_1 \dots r_d $ (for $ d=1 $ read $ C_0
|\la| \le \frac{p-1}{p} \sqrt{2r_1} \, $), holds
\[
f(\la;2r_1,r_2,\dots,r_d) \le \frac2p f \Big( \frac{p\la}{\sqrt2};
r_1,\dots,r_d \Big) + C_{d-1} \frac{p}{p-1} \frac{\la^2}{2r_1} \, .
\]
\end{lemma}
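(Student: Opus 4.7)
The plan is to apply the duplication inequality \eqref{3.2} once, and then estimate the second (lower-dimensional) term via the inductive bound $\B_{d-1}$.

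First I would substitute $2r_1$ in place of $r_1$ in \eqref{3.2}, obtaining
\[
f(\la;2r_1,r_2,\dots,r_d) \le \frac2p f\Big(\frac{p\la}{\sqrt2};r_1,r_2,\dots,r_d\Big) + \frac{p-1}p f\Big(-\frac{p}{p-1}\frac{\la}{\sqrt{2r_1}};r_2,\dots,r_d\Big).
\]
The first summand already has the form required in the conclusion, so the whole task reduces to bounding the second summand by $C_{d-1}\frac{p}{p-1}\frac{\la^2}{2r_1}$.

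To do so I would invoke $\B_{d-1}$ applied with the parameter $\mu=-\frac{p}{p-1}\frac{\la}{\sqrt{2r_1}}$ (for $d=1$, $\B_0$ in the form $f(\mu)\le C_0\mu^2$ for $C_0|\mu|\le 1$). The condition $\min(r_2,\dots,r_d)\ge C_{d-1}$ is included in the hypothesis, so only the smallness-of-$\mu$ condition has to be checked: it reads
\[
C_{d-1}\,\frac{p}{p-1}\,\frac{|\la|}{\sqrt{2r_1}} \le \sqrt{r_2\cdots r_d}\,\log^{-(d-1)}(r_2\cdots r_d).
\]
Multiplying by $\sqrt{2r_1}$ and using $r_2\cdots r_d=v/r_1$ converts this into exactly the assumption $C_{d-1}|\la|\le \frac{p-1}{p}\sqrt{2v}\log^{-(d-1)}(v/r_1)$ of the lemma (for $d=1$ the log disappears and one recovers $C_0|\la|\le\frac{p-1}{p}\sqrt{2r_1}$). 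Hence $\B_{d-1}$ yields
\[
f\Big(-\tfrac{p}{p-1}\tfrac{\la}{\sqrt{2r_1}};r_2,\dots,r_d\Big) \le C_{d-1}\cdot\frac{p^2}{(p-1)^2}\cdot\frac{\la^2}{2r_1},
\]
and multiplication by $\frac{p-1}{p}$ gives the desired bound on the second summand.

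There is no real obstacle here; the work is entirely bookkeeping. The only point worth attention is the correct cancellation that turns the $\B_{d-1}$-hypothesis into the stated hypothesis of the lemma — in particular, the factor $\sqrt{2r_1}$ in the denominator of $\mu$ interacts with $\sqrt{r_2\cdots r_d}$ to produce $\sqrt{2v}$, and the logarithm involves $v/r_1=r_2\cdots r_d$ rather than $v$ (this is why the lemma is phrased with $\log^{-(d-1)}(v/r_1)$). Once the conditions line up, the two summands from \eqref{3.2} are exactly the two terms in the conclusion.
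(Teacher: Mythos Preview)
Your proof is correct and follows essentially the same approach as the paper: apply the duplication inequality \eqref{3.2} with $2r_1$ in place of $r_1$, then bound the lower-dimensional term via $\B_{d-1}$ after verifying that the hypothesis on $\la$ translates into the required smallness condition on $\mu=\frac{p}{p-1}\frac{\la}{\sqrt{2r_1}}$. The bookkeeping you describe matches the paper's exactly.
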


\begin{sloppypar}
\begin{proof}
By \eqref{3.2}, $ f(\la;2r_1,r_2,\dots,r_d) \le \frac2p f \Big( \frac{p\la}{\sqrt2};
r_1,\dots,r_d \Big) + \frac{p-1}{p} f \(
- \frac{p}{p-1} \frac{\la}{\sqrt{2r_1}};r_2,\dots,r_d \) $;
it suffices to prove that $ f(-\mu;r_2,\dots,r_d) \le
C_{d-1} \mu^2 $ where $ \mu
= \frac{p}{p-1} \frac{\la}{\sqrt{2r_1}} $.
Condition $ \B_{d-1} $
reduces it to $ C_{d-1} |\mu| \le \sqrt{r_2\dots r_d} \log^{-(d-1)}
(r_2\dots r_d) $, that is,
$ C_{d-1}
|\mu| \le \sqrt{ \frac{v}{r_1} } \log^{-(d-1)} \frac{v}{r_1} $ (for $
d=1 $ read $ C_0|\mu| \le 1 $); it
remains to rewrite the latter in terms of $ \la $.
\end{proof}
\end{sloppypar}

This lemma is the only property of $ f_B(\la) $ used in the proof
of \cite[Prop.~2.6]{II}. The same proof applies to $ f_B(\la) $
defined by \eqref{3.7}, which gives Prop.~\ref{II-2.6} below.
Similarly to \cite[p.~5]{II} we denote for convenience
\[
R(v) = v^{\frac1d} \quad \text{and} \quad S(v) =
v^{\frac{d-1}d} \qquad \text{for all } v \in (0,\infty) \, .
\]

\begin{proposition}\label{II-2.6}
Let $f$ satisfy conditions $ \A_d $ and $ \B_{d-1} $ for a given $
d \ge 1 $, and $C$ be large enough. Then for every $ \de>0 $ there
exists natural $N$ such that for all $ r_1,\dots,r_d \in
[C,2C] $ and $ a \ge \frac{C}{R(r_1\dots r_d)} $ satisfying
\[
f_{B_0}(\la) \le a \la^2 \quad \text{for all } \la \in
[-\de,\de] \, ,
\]
where $ B_0 = [0,r_1]\times\dots\times[0,r_d] $ (and $ f_{B_0}(\la) $
is defined by \eqref{3.7}),  the following holds for all $
n_1,\dots,n_d \in \{0,1,2.\dots\} $ satisfying $ n_1+\dots+n_d \ge N
$:
\[
f_B(\la) \le 2a \la^2 \quad \text{for all } \la \in [-\De,\De] \, ,
\]
where $ B = [0,2^{n_1} r_1]\times\dots\times[0,2^{n_d} r_d] $ and
$ \De = \frac1{C_{d-1}} \sqrt{ \frac1a S ( \vol B ) } \log^{-(d-1)} S
( \vol B ) $.
\end{proposition}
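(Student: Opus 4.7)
The plan is to iterate Lemma \ref{3.8}: factor $B_0 \to B$ into $N = n_1 + \cdots + n_d$ single-side doublings $B_0 = \hat B_0 \to \hat B_1 \to \cdots \to \hat B_N = B$, and at step $j$ apply the lemma (using \eqref{3.1} to place the doubled side first) with a chosen parameter $p_j > 1$. Writing $f_{\hat B_{j-1}}(\la) \le a_{j-1}\la^2$ on $[-\De_{j-1}, \De_{j-1}]$, and denoting by $s_j$ the length of the side being doubled at step $j$ (so $s_j = 2^{k-1} r_{i_j}$ if this is the $k$-th time direction $i_j$ is doubled), the lemma yields $f_{\hat B_j}(\la) \le a_j\la^2$ on $[-\De_j, \De_j]$ with
\[
a_j = p_j\, a_{j-1} + \frac{C_{d-1}\,p_j}{2(p_j - 1)\,s_j}, \qquad \De_j = \min\!\Big(\De_{j-1}\,\tfrac{\sqrt{2}}{p_j},\; H_j\Big),
\]
where $H_j = \tfrac{1}{C_{d-1}}\tfrac{p_j-1}{p_j}\sqrt{2V_{j-1}}\log^{-(d-1)}(V_{j-1}/s_j)$ is the Lemma~\ref{3.8} bound on $|\la|$ and $V_{j-1} = V_0 \cdot 2^{j-1}$. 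Starting from $a_0 = a$, $\De_0 = \de$, I want to end with $a_N \le 2a$ and $\De_N \ge \De$.

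To control $a_N$, take $p_j - 1 = \eta\, c^k$ with some $c \in (1/2, 1)$ and small $\eta > 0$ depending only on $d$. Then $\sum_j (p_j - 1) \le d\eta\, c/(1-c)$, so (shrinking $\eta$) $\prod_j p_j \le 4/3$; iterating the recursion gives $a_N \le \tfrac{4}{3} a + \tfrac{4}{3} \sum_j \tfrac{C_{d-1}\,p_j}{2(p_j - 1)\,s_j}$. Along each direction $i$, the inner sum is a convergent geometric series in $(2c)^{-k}$ (since $2c > 1$), bounded by $O(1/(\eta\,r_i))$; summing over the $d$ directions and using $r_i \ge C$ yields total error $O(d/(\eta C))$. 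Since $a \ge C/R(V_0) \ge 1/2$, choosing $C$ large enough (in terms of $d, \eta$) makes this error $\le a/2$, hence $a_N \le 2a$.

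For the range, $\sqrt{2}/p_j > 1$ makes the prior-range factor only amplify, so the binding quantity is $\min_j H_j \prod_{j' > j} (\sqrt{2}/p_{j'})$. A short computation using $V_{j-1} = V_0 \cdot 2^{j-1}$ gives the telescoping identity $(\sqrt{2})^{N-j}\sqrt{2V_{j-1}} = \sqrt{V_N}$, so
\[
\prod_{j' > j} \tfrac{\sqrt{2}}{p_{j'}}\, \cdot H_j \; \gtrsim \; \frac{(p_j - 1)/p_j}{C_{d-1}} \cdot \frac{\sqrt{V_N}}{\log^{d-1} V_N}.
\]
Comparing with the target $\De = \tfrac{1}{C_{d-1}} \sqrt{S(V_N)/a}\, \log^{-(d-1)} S(V_N)$ (where $S(V_N) = V_N^{(d-1)/d}$) reduces $\De_N \ge \De$ to $\min_j (p_j - 1) \gtrsim 1/\sqrt{a\,V_N^{1/d}}$; with $n_{\max} \le N$, $V_N \ge V_0\,2^N \ge C^d\,2^N$ and our choice $p_j - 1 = \eta c^k$, this holds once $N$ is large enough in terms of $\de$, which threshold also ensures the seed range $\de$ is amplified past $\De$ via the accumulated $(\sqrt{2}/p_j)$ factors.

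The main obstacle is the competing demands on $p_j$: closeness to $1$ keeps $\prod p_j$ bounded (and hence $a_N \le 2a$), but the factor $(p_j - 1)/p_j$ inside $H_j$ then shrinks, threatening to push $\De_N$ below the target. The geometric schedule $p_j - 1 = \eta c^k$ with $c > 1/2$ threads this balance, provided $C$ is chosen large (in terms of $d, \eta$) and $N$ large (in terms of $\de$); the remainder of the argument, including propagating the initial range constraint $|p_1\la/\sqrt{2}| \le \de$ through all $N$ steps, follows the same pattern as in \cite[Prop.~2.6]{II}.
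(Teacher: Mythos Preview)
The paper itself gives no proof here---it simply notes that Lemma~\ref{3.8} is the only ingredient used in the proof of \cite[Prop.~2.6]{II}, so that argument transfers verbatim once $f_B$ is redefined by~\eqref{3.7}. Your sketch is exactly that transferred argument: iterate the duplication lemma along a chain of single-side doublings with a geometric schedule for $p_j-1$, bound $\prod_j p_j$ and the accumulated error to keep $a_N\le 2a$, and telescope the $\sqrt2$ factors to recover $\sqrt{V_N}$ for the range.

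One small tightening is worth flagging. For the range you need $\min_j(p_j-1)=\eta\,c^{n_{\max}}$ to dominate $(a\,V_N^{1/d})^{-1/2}\asymp 2^{-N/(2d)}$, and since $n_{\max}$ can equal $N$ (all doublings in a single direction) this forces $c>2^{-1/(2d)}$, not merely $c>1/2$. Such $c$ still lies in your interval $(1/2,1)$ and keeps $2c>1$ for the error series, so the same schedule works once you pick $c$ accordingly; with that adjustment everything goes through as you describe and matches \cite[Prop.~2.6]{II}.
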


(For $d=1$, by convention, $ \log^{-(d-1)} S(\dots) = 1 $,
notwithstanding that $ S(\dots)=1 $.)

The formulation above is stronger than \cite[Prop.~2.6]{II} because it
incorporates the fact that $N$ depends only on $ d $, $ C $ and $ \de
$ noted in \cite[Remark~2.7]{II}.
For completeness, here is the same result in terms of $
f(\la;r_1,\dots,r_d) $.

\begin{customprop}{\ref*{II-2.6}a}\label{3.9a}
Let $f$ satisfy conditions $ \A_d $ and $ \B_{d-1} $ for a given $
d \ge 1 $, and $C$ be large enough. Then for every $ \de>0 $ there
exists natural $N$ such that the following holds for all $
r_1,\dots,r_d \in [C,2C] $, $ a \ge \frac{C}{R(r_1\dots r_d)} $, $
n_1,\dots,n_d$  satisfying $ n_1+\dots+n_d \ge N $, provided that
$ \forall \la \in [-\de,\de] \;\> f(\la;r_1,\dots,r_d) \le a \la^2 $:
\[
f(\la; 2^{n_1} r_1, \dots, 2^{n_d} r_d) \le
2a \la^2 \quad \text{for all } \la \in [-\De,\De] \, ,
\]
where $ \De = \frac1{C_{d-1}} \sqrt{ \frac1a S ( 2^{n_1} r_1 \dots
2^{n_d} r_d ) } \log^{-(d-1)} S ( 2^{n_1} r_1 \dots 2^{n_d} r_d )
$.
\end{customprop}

We define for convenience, as in \cite[Sect.~2]{II},
\[
f_{v,C} (\la) = \sup_{r_1\dots r_d=v, \min(r_1,\dots,r_d)\ge C} f(\la;
r_1,\dots,r_d)
\]
for $ v \ge C^d $.

Now we adapt Prop.~2.3 of \cite{II}.

\begin{proposition}\label{II-2.4}
Let $f$ satisfy conditions $ \A_d $ and $ \B_{d-1} $ for a given $
d \ge 1 $. Then there exists $ C \in (1,\infty) $ such that
\[
f_{v,C} (\la) \le C \la^2 \quad \text{whenever} \quad C
|\la| \le \sqrt{S(v)} \log^{-(d-1)} v \text{ and } v \ge C^d \, .
\]
\end{proposition}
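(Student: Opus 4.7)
The plan is to combine the initial bound from \eqref{3.3} with a single application of Prop.~\ref{3.9a} to cover all sufficiently large rectangles with a uniform constant.

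Fix $C_0$ large enough for Prop.~\ref{3.9a} to apply with its ``$C$'' equal to $C_0$. Apply \eqref{3.3} with parameter $2C_0$: this yields $\eps\in(0,1]$ such that $f(\la;r_1,\dots,r_d)\le \eps^{-2}\la^2$ whenever $r_i\in[C_0,2C_0]$ and $|\la|\le\eps$. Set $a:=\eps^{-2}$ and $\de:=\eps$; the side condition $a\ge C_0/R(r_1^{(0)}\cdots r_d^{(0)})$ in Prop.~\ref{3.9a} holds because the geometric mean of any $r_i^{(0)}\in[C_0,2C_0]$ is at least $C_0$ and $a\ge 1$. Prop.~\ref{3.9a} then produces a natural $N$ such that, for every $r_i^{(0)}\in[C_0,2C_0]$ and non-negative integers $n_1,\dots,n_d$ with $n_1+\cdots+n_d\ge N$,
\[
f(\la;2^{n_1}r_1^{(0)},\dots,2^{n_d}r_d^{(0)})\le 2a\la^2\quad\text{for }|\la|\le\De,
\]
where $\De = C_{d-1}^{-1}\sqrt{S(v)/a}\,\log^{-(d-1)} S(v)$ and $v=2^{n_1+\cdots+n_d}r_1^{(0)}\cdots r_d^{(0)}$.

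Next, choose $C$ (the constant in the statement) so large that (i) $C\ge C_0\cdot 2^N$, (ii) $C\ge 2a$, and (iii) $C\ge (C_{d-1}/\eps)\bigl(\tfrac{d-1}{d}\bigr)^{d-1}$ (using the convention $0^0=1$ for $d=1$). For any $(r_1,\dots,r_d)$ with $\min_i r_i\ge C$ and $r_1\cdots r_d=v\ge C^d$, set $n_i:=\lfloor\log_2(r_i/C_0)\rfloor$ and $r_i^{(0)}:=r_i/2^{n_i}\in[C_0,2C_0)$; by (i) each $n_i\ge N$, so Prop.~\ref{3.9a} applies and yields $f(\la;r_1,\dots,r_d)\le 2a\la^2\le C\la^2$ on $|\la|\le\De$ (using (ii)).

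It remains to reconcile the two $\la$-ranges. Using $\log S(v)=\tfrac{d-1}{d}\log v$ for $d\ge 2$ (and $S(v)=1$, $\log^{-(d-1)}\equiv 1$ for $d=1$), one rewrites $\De=\tfrac{\eps}{C_{d-1}}\bigl(\tfrac{d}{d-1}\bigr)^{d-1}\sqrt{S(v)}\,\log^{-(d-1)} v$; by (iii) this is at least $C^{-1}\sqrt{S(v)}\log^{-(d-1)} v$, so $C|\la|\le\sqrt{S(v)}\log^{-(d-1)} v$ implies $|\la|\le\De$, and taking the supremum over admissible $(r_1,\dots,r_d)$ gives the asserted bound on $f_{v,C}$. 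The main technical nuisance is precisely this bookkeeping with logarithmic factors (the hidden simplification that $\sqrt{S(v)}\log^{-(d-1)} S(v)$ becomes a dimensional constant times $\sqrt{S(v)}\log^{-(d-1)} v$), together with the degenerate $d=1$ case; no further conceptual ingredient is required beyond Prop.~\ref{3.9a} and \eqref{3.3}.
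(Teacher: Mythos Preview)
Your proof is correct and follows essentially the same route as the paper's: use \eqref{3.3} on a base window $[C_0,2C_0]^d$, feed that into Prop.~\ref{3.9a}, then dyadically reduce arbitrary $r_i\ge C$ to that window and read off the bound. The only differences are cosmetic: the paper ensures $n_1+\dots+n_d\ge N$ via the milder constraint $M\ge 2C_0\,2^{N/d}$ (invoking \cite[Lemma 3.1]{III}) whereas you force each $n_i\ge N$ via $C\ge C_0\,2^N$; and you spell out the verification that $C|\la|\le\sqrt{S(v)}\log^{-(d-1)}v$ implies $|\la|\le\De$, which the paper leaves implicit.
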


\begin{sloppypar}
\begin{proof}
We take $C$ large enough according to
Prop.~\ref{3.9a}. Condition \eqref{3.3} gives $ \eps>0 $ such that
$ \eps^2 f(\la; r_1,\dots,r_d) \le \la^2 $ whenever $ r_1,\dots,r_d \in
[C, 2C] $ and $ |\la| \le \eps $. We take $ \de = \eps $, $ a
= \max \( 1, \frac1{\eps^2} \) $. Prop.~\ref{3.9a} gives $N$. We take $ M
= \max ( 2a, 2C \cdot 2^{N/d} ) $. It is sufficient to prove that $
f_{v,M} (\la) \le M \la^2 $ whenever $
M|\la| \le \sqrt{S(v)} \log^{-(d-1)} v $ and $ v \ge M^d $. That is, $
f(\la;r_1,\dots,r_d) \le M \la^2 $ whenever $ r_1\dots r_d = v $, $
r_1,\dots,r_d \ge M $.

Given such $ r_1,\dots,r_d $, we have $ r_1,\dots, r_d \ge M \ge C $;
Lemma \cite[3.1]{III} gives $ n_1,\dots,n_d $ such that $ 2^{-n_1}
r_1, \dots, 2^{-n_d} r_d \in [C,2C) $ and $ 2^{-d} r_1\dots r_d < C^d
2^{n_1+\dots+n_d} $, whence $ 2^{n_1+\dots+n_d} > (2C)^{-d} r_1\dots
r_d \ge (2C)^{-d} M^d \ge 2^N $ and so, $ n_1+\dots+n_d > N $.

Taking into account that $ f(\la; 2^{-n_1} r_1, \dots, 2^{-n_d} r_d
) \le \frac1{\eps^2} \la^2 \le a \la^2 $ for $ \la \in [-\de,\de] \subset
[-\eps,\eps] $, and $ a \ge 1 \ge \frac{C}{R(2^{-n_1} r_1 \dots
2^{-n_d})} $, we apply Prop.~\ref{3.9a} to $ 2^{-n_1} r_1, \dots,
2^{-n_d} r_d $, getting $ f(\la;r_1,\dots,r_d) \le 2a \la^2 \le
M \la^2 $ as needed.
\end{proof}
\end{sloppypar}

Toward proving Prop.~\ref{3.6} we assume $ \A_d $ and $ \B_{d-1} $ for
a given $ d \ge 1 $; we need to deduce $ \B_d $. Prop.~\ref{II-2.4}
gives a constant $ C $; denote it $ C_{\text{\ref*{II-2.4}}} $. WLOG,
$ C_{\text{\ref*{II-2.4}}} \ge C_{d-1} $. We define for convenience,
as in \cite[Sect.~3]{II},
\[
f_v(\la) = f_{v,C_{\text{\ref*{II-2.4}}}} (\la) = \sup_{r_1\dots
r_d=v, \min(r_1,\dots,r_d)\ge C_{\text{\ref*{II-2.4}}}} f(\la;
r_1,\dots,r_d) \quad \text{for } v \ge C_{\text{\ref*{II-2.4}}}^d \, ,
\]
and adapt Lemma 3.2 of \cite{II}.

\begin{lemma}
For all $ p \in (1,\infty) $,
\[
f_{2v}(\la) \le \frac2p f_v \Big( \frac{p\la}{\sqrt2} \Big) +
C_{d-1} \frac{p}{p-1} \cdot \frac{\la^2}{R(2v)}
\]
whenever $ C_{d-1} |\la| \le \frac{p-1}{p} \sqrt{2v} \log^{-(d-1)}
S(2v) $ and $ 2v \ge (2C_{\text{\ref*{II-2.4}}})^d $.
\end{lemma}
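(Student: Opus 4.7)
The plan is to reduce the inequality to a single coordinatewise application of Lemma~\ref{3.8} followed by the supremum defining $f_v$. First I would fix $\la$ satisfying the hypotheses and fix any admissible tuple $(r_1,\dots,r_d)$ with $r_1\cdots r_d = 2v$ and $\min_j r_j \ge C_{\text{\ref*{II-2.4}}}$; let $k$ be an index attaining $r_k = \max_j r_j$. Since the geometric mean of the $r_j$ equals $R(2v) = (2v)^{1/d}$, one has $r_k \ge R(2v)$; the hypothesis $2v \ge (2C_{\text{\ref*{II-2.4}}})^d$ gives $R(2v) \ge 2C_{\text{\ref*{II-2.4}}}$, so $r_k/2 \ge C_{\text{\ref*{II-2.4}}} \ge C_{d-1}$, and every $r_j$ with $j \ne k$ is already $\ge C_{d-1}$.

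Next I would invoke the permutation invariance~\eqref{3.1} to move the $k$-th coordinate into first position and apply Lemma~\ref{3.8} with its ``$r_1$'' equal to $r_k/2$, writing $r_k = 2\cdot(r_k/2)$:
\[
f(\la; r_1,\dots,r_d) \le \frac{2}{p}\, f\Big(\frac{p\la}{\sqrt2};\, r_1,\dots,\frac{r_k}{2},\dots,r_d\Big) + C_{d-1}\,\frac{p}{p-1}\cdot\frac{\la^2}{r_k}.
\]
The halved tuple has product exactly $v$ and every entry $\ge C_{\text{\ref*{II-2.4}}}$, so by the definition of $f_v$ the first term on the right is $\le \frac{2}{p}\, f_v(p\la/\sqrt 2)$; and $r_k \ge R(2v)$ gives $\la^2/r_k \le \la^2/R(2v)$. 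Taking the supremum of the left-hand side over all admissible tuples with product $2v$ then delivers the asserted bound on $f_{2v}(\la)$.

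The one step needing care, and essentially the only nontrivial content, is checking that the $\la$-hypothesis of Lemma~\ref{3.8} is implied by that of the present lemma. Applied with ``$r_1$'' $= r_k/2$ and ``$v$'' equal to the halved product $v$, Lemma~\ref{3.8} demands $C_{d-1}|\la| \le \frac{p-1}{p}\sqrt{2v}\,\log^{-(d-1)}(2v/r_k)$. Since $r_k \ge R(2v)$, one has $2v/r_k \le 2v/R(2v) = S(2v)$, hence $\log(2v/r_k) \le \log S(2v)$; provided $C_{\text{\ref*{II-2.4}}}$ is taken large enough that both logarithms exceed $1$ (needed only for $d \ge 2$), raising to the negative power $-(d-1)$ reverses the inequality and gives $\log^{-(d-1)}(2v/r_k) \ge \log^{-(d-1)} S(2v)$, so the present lemma's assumption is the stronger one and therefore suffices. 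For $d=1$ the convention $\log^{0}(\cdot) = 1$ collapses both conditions to $C_0|\la| \le \frac{p-1}{p}\sqrt{2v}$, and the implication is trivial.
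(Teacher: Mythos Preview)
Your proof is correct and follows essentially the same route as the paper's: pick the largest coordinate, use permutation invariance to place it first, apply Lemma~\ref{3.8} to the halved coordinate, and then bound each resulting term via $r_k \ge R(2v)$ and the definition of $f_v$. The paper is slightly terser about the monotonicity of $\log^{-(d-1)}(\cdot)$, relying implicitly on $2v/r_1 = r_2\cdots r_d \ge C_{\text{\ref*{II-2.4}}}^{\,d-1} > 1$ rather than your ``logarithms exceed $1$'' remark, but the argument is the same.
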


\begin{proof}
Given $ r_1,\dots,r_d \in [C_{\text{\ref*{II-2.4}}},\infty) $ such that
$ r_1\dots r_d = 2v $, we assume that $ \max(r_1,\dots,r_d) = r_1 $
(WLOG, due to \eqref{3.1}) and apply Lemma \ref{3.8} to $ \frac12 r_1,
r_2, \dots, r_d $:
\[
f(\la; r_1,\dots,r_d) \le \frac2p
f \Big( \frac{p\la}{\sqrt2}; \frac{r_1}2,r_2,\dots,r_d \Big) +
C_{d-1} \frac{p}{p-1} \frac{\la^2}{r_1}
\]
for $ C_{d-1} |\la| \le \frac{p-1}p \sqrt{2v} \log^{-(d-1)} \frac{2v}{r_1} $. We 
note that $ r_1 \ge R(2v) \ge R\( (2C_{\text{\ref*{II-2.4}}})^d \) = 2
C_{\text{\ref*{II-2.4}}} $, thus $ f \( \frac{p\la}{\sqrt2};
\frac{r_1}2,r_2,\dots,r_d \) \le f_v\( \frac{p\la}{\sqrt2} \) $ and
$ \frac{\la^2}{r_1} \le \frac{\la^2}{R(2v)} $. Also,
$ \frac{2v}{r_1} \le \frac{2v}{R(2v)} = S(2v) $, thus $ \log^{-(d-1)}
S(2v) \le \log^{-(d-1)}\frac{2v}{r_1} $.
\end{proof}

This lemma is the only property of $ f_v(\la) $ used in \cite{II} when
proving Corollaries 3.3, 3.4, 3.5, Lemmas 3.9, 3.11, 3.13, 3.14, 3.16
and ultimately Prop.~3.6 ($ C_1 $ there is $ C_{d-1} $ here; $
C_2 $ there is $ C_{\text{\ref*{II-2.4}}} $ here; and (3.1) there
is ensured by Prop.~\ref{II-2.4} here), which gives the
following remake of \cite[Prop.~3.6]{II}.

\begin{proposition}
There exists $ C \in (1,\infty) $ such that for every $ \la \in \R $,
\[
\text{if } \; C|\la| \le \sqrt v \log^{-d} v \; \text{ and } \; v \ge
C^d \, , \quad \text{then} \quad f_{v,C}(\la) \le C \la^2 \, .
\]
\end{proposition}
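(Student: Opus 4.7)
The plan is to follow the iterative scheme of \cite[Sect.~3]{II}, as the preceding lemma is explicitly advertised to be the only property of $f_v$ needed in that scheme. The target is to strengthen the $\lambda$-range in Prop.~\ref{II-2.4} from $C|\lambda|\le\sqrt{S(v)}\log^{-(d-1)}v$ to the full range $C|\lambda|\le\sqrt{v}\log^{-d}v$, at the cost of enlarging $C$. Geometrically, we are trading one factor of $R(v)^{1/2}$ (and picking up one extra $\log$) by iterating the doubling inequality $\sim\log_2(R(v))$ times.

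First, I would fix the constants: take $C\ge C_{\text{\ref*{II-2.4}}}$ large enough so that Prop.~\ref{II-2.4} provides the base-case estimate $f_{v,C}(\lambda)\le C\lambda^2$ in its narrower $\lambda$-range. Given a target $(v,\lambda)$ with $v\ge C^d$ and $C|\lambda|\le\sqrt{v}\log^{-d}v$, write $v=2^{N}v_0$ with $v_0$ in the "base" range $[C_{\text{\ref*{II-2.4}}}^d, 2^d C_{\text{\ref*{II-2.4}}}^d]$, and apply the duplication inequality of the previous lemma $N$ times in succession, choosing a sequence $p_1,\dots,p_N\in(1,\infty)$. Telescoping gives a bound of the form
\[
f_v(\lambda)\le \Big(\prod_{i=1}^N \tfrac{2}{p_i}\Big)\, f_{v_0}\Big(\tfrac{p_1\cdots p_N}{2^{N/2}}\lambda\Big) + C_{d-1}\sum_{i=1}^N \Big(\prod_{j<i}\tfrac{2}{p_j}\Big)\tfrac{p_i}{p_i-1}\cdot\tfrac{(p_1\cdots p_{i-1})^2}{2^{i-1}}\cdot\tfrac{\lambda^2}{R(2^{i}v_0)}\,.
\]

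Next, I would tune $p_i\to 1$ in the standard way (e.g.\ $p_i=1+O(i^{-1-\eta})$ for some $\eta>0$). This keeps $\prod p_i$ and hence the prefactor $\prod(2/p_i)/2^N$ bounded, rescales the argument on the right only by a bounded factor, and makes the geometric series in the error term convergent because $R(2^i v_0)=(2^i v_0)^{1/d}$ grows exponentially in $i$. Feeding Prop.~\ref{II-2.4}'s base-case bound into the first term yields $f_v(\lambda)\le C'\lambda^2$ for a universal $C'$, provided the $\lambda$-range constraint $C_{d-1}|\lambda_i|\le\tfrac{p_i-1}{p_i}\sqrt{2v_i}\log^{-(d-1)}S(2v_i)$ is met at each step~$i$. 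A direct calculation (and this is where the gain of one extra $\log v$ factor appears) shows that these constraints, summed over $i=1,\dots,N$, are implied by the single end-point condition $C|\lambda|\le\sqrt{v}\log^{-d}v$ once $C$ is large enough.

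The main obstacle is the delicate bookkeeping in step 3: balancing the rate at which $p_i\to 1$ (which we want fast, to keep $\prod p_i$ bounded) against the range constraint (which prefers $p_i$ bounded away from $1$, so that $\tfrac{p_i-1}{p_i}$ is not too small). Resolving this tension is exactly the content of \cite[Corollaries 3.3--3.5 and Lemmas 3.9, 3.11, 3.13, 3.14, 3.16]{II}; since their arguments depend on $f_v$ only through the preceding lemma, they transcribe verbatim into the present setting and give the desired conclusion, with $C$ then taken as the maximum of all constants arising along the way.
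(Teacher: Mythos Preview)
Your proposal is correct and takes essentially the same approach as the paper: both observe that the preceding duplication lemma is the sole property of $f_v$ used in \cite[Sect.~3]{II}, and hence the chain of results there (Corollaries 3.3--3.5, Lemmas 3.9, 3.11, 3.13, 3.14, 3.16, and ultimately Prop.~3.6) transcribes verbatim to the present setting. The paper simply states this transfer without elaboration, while you add a helpful heuristic sketch of what the iteration in \cite{II} actually does; but the substance is identical.
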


Denoting this $C$ by $C_{d}$ we get condition $ \B_d $, thus deduced
from $ \A_d $ and $ \B_{d-1} $. This completes the proof of
Prop.~\ref{3.6} and Theorem \ref{II-1.5}.

\section[Upper bounds, applied]
  {\raggedright Upper bounds, applied}
\label{sect4}
We define for $ d \in \{1,2,\dots\} $
\begin{equation}\label{4.1}
f(\la; r_1,\dots,r_d) = \sup_X \log \Ex \exp \frac{\la}{\sqrt{r_1\dots
r_d}} \int_{[0,r_1)\times\dots\times[0,r_d)} X_t \, \D t
\end{equation}
where the supremum is taken over all \splittable{1} random fields $X$ on
$ \R^d $; and for $d=0$,
\begin{equation}\label{4.1aaa}
f(\la) = \begin{cases}
  \la^2 &\text{for } |\la| \le 1,\\
  +\infty &\text{for } |\la| > 1,
\end{cases}
\end{equation}

Condition \eqref{3.1} is satisfied due to Corollary \ref{2.6}.

\begin{proposition}\label{4.1aa}
Condition \eqref{3.2} is satisfied.
\end{proposition}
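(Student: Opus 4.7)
The plan is to fix a \splittable{1} random field $X$ on $\R^d$ and decompose its integral over $[0,r_1)\times\dots\times[0,r_d)$ along the first coordinate using Prop.~\ref{2.13}, then apply H\"older. First invoke Cor.~\ref{2.9} to replace $X$ by the shift $\ti X_t = X_{t+(r_1/2,0,\dots,0)}$, which remains \splittable{1}; this moves the splitting hyperplane to $t_1=0$. Applying Prop.~\ref{2.13} to $\ti X$ with $B=[0,r_2)\times\dots\times[0,r_d)$ and $r=r_1/2$ yields $U,V,W,Z$ with $U,V$ mutually independent and $U+V=W+Z$, where (for $d>1$) $U$ and $V$ are distributed as integrals of a \splittable{1} field over the two half-rectangles of volume $v/2$ (with $v=r_1\dots r_d$), $W$ as the integral over the full rectangle, and $Z=\int_B Y_s\,\D s$ for some \splittable{1} field $Y$ on $\R^{d-1}$.

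The heart of the argument is H\"older with conjugate exponents $p$ and $q=p/(p-1)$. Writing $W=U+V-Z$ gives
\[
\log\Ex\exp\frac{\la}{\sqrt v}W \le \frac1p \log\Ex\exp\frac{p\la}{\sqrt v}(U+V) + \frac1q \log\Ex\exp\Bigl(-\frac{q\la}{\sqrt v}Z\Bigr),
\]
and independence factorises the first logarithm into two identical terms. A short scaling check---with $\mu=p\la/\sqrt v$ one has $\mu\sqrt{v/2}=p\la/\sqrt2$, and with $\mu=-q\la/\sqrt v$ one has $\mu\sqrt{r_2\dots r_d}=-\frac p{p-1}\cdot\frac\la{\sqrt{r_1}}$---then bounds the three terms, via the definition \eqref{4.1}, by $\frac1p f(p\la/\sqrt2;\,r_1/2,r_2,\dots,r_d)$ twice and $\frac1q f\bigl(-\frac p{p-1}\frac\la{\sqrt{r_1}};\,r_2,\dots,r_d\bigr)$. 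Summing gives exactly the right-hand side of \eqref{3.2} for the single field $X$, and taking the supremum on the left over \splittable{1} $X$ yields \eqref{3.2}.

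The only step needing real care is the case $d=1$, where the $Z$-term must be bounded by $f$ as defined by \eqref{4.1aaa}. Here Rem.~\ref{2.19} supplies only $\Ex Z=0$ and $\Ex\E^{|Z|}\le 2$, and I would interpose the following auxiliary lemma: any random variable $Z$ with those two properties satisfies $\log\Ex\E^{\nu Z}\le\nu^2$ for all $|\nu|\le 1$. Its proof uses $\log(1+x)\le x$ to reduce to bounding $\Ex(\E^{\nu Z}-1-\nu Z)$, combined with the fact that $x\mapsto(\E^x-1-x)/x^2$ is nondecreasing on $[0,\infty)$; since $|\nu Z|\le|Z|$ this yields $\Ex(\E^{\nu Z}-1-\nu Z)\le\nu^2\,\Ex(\E^{|Z|}-1-|Z|)\le\nu^2(2-1-0)=\nu^2$. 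For $|\nu|>1$ the right-hand side of \eqref{3.2} is $+\infty$ and the inequality is trivial. I expect this sharp auxiliary estimate to be the main obstacle, since a loose argument would produce an extra constant factor and break the match with \eqref{4.1aaa}; by comparison, the shift, decomposition and H\"older manipulations are routine once set up.
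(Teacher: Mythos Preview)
Your proof is correct and follows essentially the same route as the paper. The paper packages the H\"older step as Lemma~\ref{4.1ab} and the split-and-bound step as Lemma~\ref{4.1a}, and for $d=1$ it quotes the sub-exponential estimate \eqref{***} from an earlier paper rather than proving it; you simply inline these ingredients and supply your own proof of the auxiliary lemma (which is exactly \eqref{***}). One minor imprecision: after factorising you write ``two identical terms'', but since $X$ is not assumed stationary the terms $\log\Ex\exp(\mu U)$ and $\log\Ex\exp(\mu V)$ need not be equal---what is true (and what you use) is that each is bounded by the same $f(p\la/\sqrt2;\,r_1/2,r_2,\dots,r_d)$, via Cor.~\ref{2.9}.
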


We borrow a general fact \cite[Lemma 1.7]{2008}.

\begin{lemma}\label{4.1ab}
For all random variables $ X,Y $ and all $ p \in (1,\infty) $,
\begin{multline*}
p \log \Ex \exp \frac1p X - (p-1) \log \Ex \exp \Big( - \frac1{p-1} Y \Big) \le
 \\
\le \log \Ex \exp (X+Y) \le \frac1p \log \Ex \exp pX + \frac{p-1}p \log \Ex \exp
\frac{p}{p-1} Y \, .
\end{multline*}
(In the lower bound we interpret $ \infty-\infty $ as $ -\infty $.)
\end{lemma}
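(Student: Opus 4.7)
The plan is to derive both inequalities as direct applications of Hölder's inequality with conjugate exponents $p$ and $p/(p-1)$. No induction or probabilistic structure is needed beyond this; the content of the lemma is purely the convexity of the moment generating function, repackaged.

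For the upper bound, write $\exp(X+Y) = \exp(X)\cdot\exp(Y)$ and apply Hölder:
\[
\Ex \exp(X+Y) \le \bigl(\Ex \exp(pX)\bigr)^{1/p}\bigl(\Ex \exp \tfrac{p}{p-1}Y\bigr)^{(p-1)/p}.
\]
Taking logarithms yields precisely the right-hand inequality. If either moment on the right is $+\infty$, the bound is trivial; if both are finite, Hölder applies in its usual form.

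For the lower bound, the trick is to use the decomposition $X = (X+Y) + (-Y)$, which gives the factorization
\[
\exp \tfrac{1}{p} X = \exp \tfrac{1}{p}(X+Y) \cdot \exp\bigl(-\tfrac{1}{p}Y\bigr).
\]
Applying Hölder to this product with the same conjugate exponents $p$ and $p/(p-1)$ yields
\[
\Ex \exp \tfrac{1}{p} X \le \bigl(\Ex \exp(X+Y)\bigr)^{1/p}\bigl(\Ex \exp(-\tfrac{1}{p-1}Y)\bigr)^{(p-1)/p};
\]
raising both sides to the $p$-th power, taking logarithms, and moving the $Y$-term to the other side gives exactly the left-hand inequality of the lemma.

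There is no real obstacle; the only subtle point is the extended-real arithmetic flagged by the statement. If $\Ex \exp(-\tfrac{1}{p-1}Y)=+\infty$ while $\Ex \exp\tfrac{1}{p}X$ is finite, the stated lower bound reads $\log\Ex \exp(X+Y)\ge -\infty$, trivially true; if both are $+\infty$, the convention $\infty-\infty = -\infty$ again makes the inequality vacuous, consistently with the Hölder derivation above (whose conclusion is non-trivial only when the $Y$-moment is finite).
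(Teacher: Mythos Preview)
Your proof is correct and essentially identical to the paper's: both obtain the upper bound by H\"older with exponents $p$ and $p/(p-1)$ applied to $\exp X\cdot\exp Y$, and then derive the lower bound by applying the same H\"older step to the factorization $\exp\frac1p X=\exp\frac1p(X+Y)\cdot\exp(-\frac1p Y)$ (the paper phrases this as ``apply the upper bound to $\frac1p(X+Y)$ and $-\frac1p Y$ in place of $X,Y$'').
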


\begin{proof}
By the H\"older inequality,
\[
\Ex \exp(X+Y) = \Ex ( \exp X \cdot \exp Y ) \le ( \Ex \exp pX )^{1/p} \Big(
\Ex \exp \frac{p}{p-1} Y \Big)^{(p-1)/p} \, ;
\]
the upper bound follows. We apply the upper bound to $ \frac1p (X+Y) $ and $
\( -\frac1p Y \) $ instead of $ X,Y $:
\[
\log \Ex \exp \frac1p X \le \frac1p \log \Ex \exp (X+Y) + \frac{p-1}p \log \Ex
\exp \Big( -\frac1{p-1} Y \Big) \, ;
\]
the lower bound follows.
\end{proof}

\begin{lemma}\label{4.1a}
Let $X$ be a \splittable{1} random field on $ \R^d $, $d>1$, $
r,r_2,\dots,r_d>0 $, and $ B = [0,r_2)\times\dots\times[0,r_d) \subset
\R^{d-1} $. Then for all $ \la \in \R $ holds\footnote{%
 The lower bound will be used in Sect.~\ref{sect5}.}
\begin{multline*}
p \log \Ex \exp \frac\la{p} \int_{[-r,0)\times B} X_t \, \D t +
 p \log \Ex \exp \frac\la{p} \int_{[0,r)\times B} X_t \, \D t \, - \\
- (p-1) f \Big( \frac\la{p-1} \sqrt{\vol B}; r_2,\dots,r_d \Big) \le \\
\le \log \Ex \exp \la \int_{[-r,r)\times B} X_t \, \D t \le
 \frac1p \log \Ex \exp p\la \int_{[-r,0)\times B} X_t \, \D t \, + \\
+ \frac1p \log \Ex \exp p\la \int_{[0,r)\times B} X_t \, \D t +
 \frac{p-1}p f \Big( -\frac{p}{p-1} \la\sqrt{\vol B}; r_2,\dots,r_d
\Big) \, .
\end{multline*}
\end{lemma}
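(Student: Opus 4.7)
The plan is to combine Prop.~\ref{2.13}, which furnishes the algebraic identity $W = U + V - Z$ with $U,V$ independent, with the H\"older-type two-sided inequality of Lemma \ref{4.1ab}, and finally to control the ``defect'' variable $Z$ via the definition \eqref{4.1} of $f$.

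First I would apply Prop.~\ref{2.13} (with the given $r$ and box $B \subset \R^{d-1}$) to the \splittable{1} field $X$. This yields random variables $U,V,W,Z$ on some probability space such that $U,V$ are mutually independent, $W + Z = U + V$, and the distributions of $U$, $V$, $W$ match $\int_{[-r,0)\times B} X_t \,\D t$, $\int_{[0,r)\times B} X_t \,\D t$, $\int_{[-r,r)\times B} X_t \,\D t$, respectively, while $Z \sim \int_B Y_s \,\D s$ for some \splittable{1} random field $Y$ on $\R^{d-1}$.

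Next, I would rewrite $\la W = \la(U+V) + (-\la Z)$ and apply Lemma \ref{4.1ab} in both directions to the pair of random variables $\la(U+V)$ and $-\la Z$. The upper bound of Lemma \ref{4.1ab} gives
\[
\log \Ex \exp \la W \le \tfrac1p \log \Ex \exp p\la(U+V) + \tfrac{p-1}p \log \Ex \exp \bigl( -\tfrac{p}{p-1} \la Z \bigr),
\]
and the lower bound gives the symmetric inequality with $\frac1p X$ and $-\frac1{p-1} Y$. Independence of $U$ and $V$ factorizes the exponential moments, so $\log \Ex \exp \mu(U+V) = \log \Ex \exp \mu U + \log \Ex \exp \mu V$ for every $\mu$, which produces the two integrals over $[-r,0)\times B$ and $[0,r)\times B$ appearing in the statement.

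Finally, for the $Z$-terms I would use the definition \eqref{4.1}: since $Y$ is \splittable{1} on $\R^{d-1}$ and $B = [0,r_2)\times\dots\times[0,r_d)$ has volume $\vol B = r_2\dots r_d$,
\[
\log \Ex \exp \mu \int_B Y_s \, \D s = \log \Ex \exp \tfrac{\mu\sqrt{\vol B}}{\sqrt{r_2\dots r_d}} \int_B Y_s \, \D s \le f\bigl( \mu \sqrt{\vol B};\, r_2,\dots,r_d \bigr)
\]
for any $\mu \in \R$. Applied with $\mu = -\frac{p}{p-1}\la$ for the upper bound and $\mu = \frac{\la}{p-1}$ for the lower bound, this yields exactly the $f$-terms in the statement. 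The only subtlety is bookkeeping of signs (note the minus sign in front of $Z$ and the convention $\infty - \infty = -\infty$ for the lower bound mentioned in Lemma \ref{4.1ab}); no real obstacle arises since everything follows from assembling Prop.~\ref{2.13}, Lemma \ref{4.1ab}, and \eqref{4.1} mechanically.
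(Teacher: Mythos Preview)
Your proposal is correct and follows essentially the same route as the paper: apply Prop.~\ref{2.13} to obtain $U,V,W,Z$ with $W+Z=U+V$ and $U,V$ independent, then apply both sides of Lemma~\ref{4.1ab} to $\la(U+V)$ and $-\la Z$, factor using independence, and bound the $Z$-terms via \eqref{4.1}. The only difference is that you spell out the bookkeeping for the $f$-term explicitly, which the paper leaves to the reader.
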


\begin{proof}
Prop.~\ref{2.13} gives a \splittable{1} random field $Y$ on $ \R^{d-1}
$ and random variables $ U,V,W,Z $ such that $U,V$
are independent, $W+Z=U+V$, and
$ U \sim \int_{[-r,0)\times B} X_t \, \D t $,
$ V \sim \int_{[0,r)\times B} X_t \, \D t $,
$ W \sim \int_{[-r,r)\times B} X_t \, \D t $,
$ Z \sim \int_B Y_s \, \D s $.
It remains to apply Lemma \ref{4.1ab} to $ X = \la(U+V) $, $ Y = -\la
Z $ and use independence of $U,V$.
\end{proof}

Another general fact, borrowed from \cite[Lemma 2a8]{I} (see also
\cite[Lemma 1.8]{II}: for arbitrary random variable $Z$,
\begin{equation}\label{***}
\text{if } \Ex \exp |Z| \le 2 \text{ and } \Ex Z = 0 \, , \;\>
\text{then} \;\; \forall \la \in [-1,1] \;\> \log \Ex \exp \la Z \le
\la^2 \, .
\end{equation}

\begin{proof}[Proof of Prop.~\ref{4.1aa}]
For $ d>1 $, Lemma \ref{4.1a} (the upper bound) applied to $ r = \frac12 r_1 $ and $
\frac\la{\sqrt{r_1\dots r_d}} $ instead of $\la$ gives
\begin{multline*}
\log \Ex \exp \frac{\la}{\sqrt{r_1\dots r_d}} \int_{[-r,r)\times B}
 X_t \, \D t \le
 \frac1p \log \Ex \exp \frac{p\la}{\sqrt{r_1\dots r_d}}
 \int_{[-r,0)\times B} X_t \, \D t \, + \\
+ \frac1p \log \Ex \exp \frac{p\la}{\sqrt{r_1\dots r_d}}
 \int_{[0,r)\times B} X_t \, \D t +
 \frac{p-1}p f \Big( -\frac{p}{p-1} \frac\la{\sqrt{r_1}}; r_2,\dots,r_d
 \Big) \, ;
\end{multline*}
the first term (in the right-hand side) does not exceed $ \frac1p f \Big( \frac{p\la}{\sqrt 2};
\tfrac12 r_1, r_2,\dots,r_d \Big) $; the same holds for the second
term. It remains to take the supremum over all \splittable{1} random fields
$X$ on $\R^d$.

For $ d=1 $, Remark~\ref{2.19} gives ($U,V,W$ and) $Z$ such that $
\Ex \exp |Z| \le 2 $ and $ \Ex Z = 0 $. By \eqref{4.1aaa} and
\eqref{***}, $ \log \Ex \exp \la Z \le f(\la) $ for all $ \la $ (and all
relevant $Z$), and so, $ \log \Ex \exp \frac{\la}{\sqrt{r}} W \le
\frac2p f \( \frac{p\la}{\sqrt 2}; \tfrac12 r \) + \frac{p-1}p f \(
-\frac{p}{p-1} \frac{\la}{\sqrt{r}} \) $.
\end{proof}

\begin{proposition}\label{4.2}
Condition \eqref{3.3} is satisfied.
\end{proposition}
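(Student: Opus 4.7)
The strategy is to bound the cumulant generating function of $Z := \int_B X_t\,\D t$ uniformly in the splittable-$1$ random field $X$, where $B = [0,r_1)\times\dots\times[0,r_d)$, and then translate this into a bound on $f(\la;r_1,\dots,r_d)$. The key inputs will be a uniform exponential moment bound for $\int_B |X_t|\,\D t$, vanishing mean $\Ex Z = 0$, and the general CGF estimate \eqref{***}.

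First I would extract an exponential moment bound that depends on $d$ and $C$ only. Since each $r_i \le C$, the box $B$ is covered by at most $N := \lceil C\rceil^d$ unit cubes of the form $[0,1)^d+s$ with $s \in \Z^d$. Definition~\ref{1.3}(a) gives $\Ex \exp \int_{[0,1)^d+s} |X_t|\,\D t \le 2$ on each such cube. Iterating the Hölder inequality across the $N$ cubes (the same trick used in the footnote to Item~4 of Section~\ref{sect1}, but with $N$ factors instead of two) yields
\[
\Ex \exp \frac{1}{N} \int_B |X_t|\,\D t \le 2,
\]
uniformly in $X$. Combined with $|Z| \le \int_B |X_t|\,\D t$ and $\Ex Z = 0$ (Def.~\ref{1.3}(b)), this puts $\frac{1}{N}Z$ into the setting of \eqref{***}, giving $\log \Ex \exp \mu Z \le N^2 \mu^2$ for $|\mu| \le 1/N$.

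Now I would substitute $\mu = \la/\sqrt{r_1\cdots r_d}$ and take the supremum over $X$ in the definition \eqref{4.1}. Using $r_1\cdots r_d \ge (C/2)^d$ gives
\[
f(\la;r_1,\dots,r_d) \le \frac{N^2 \la^2}{r_1\cdots r_d} \le \frac{N^2}{(C/2)^d}\,\la^2
\]
whenever $|\la| \le \sqrt{r_1\cdots r_d}/N$, which is ensured by taking $\eps := (C/2)^{d/2}/N$. Then $\eps^2 f(\la;r_1,\dots,r_d) \le \la^2$ for all $\la \in [-\eps,\eps]$ and all $r_i \in [C/2,C]$, which is exactly \eqref{3.3}. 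There is no real obstacle: the only point needing care is that $N$ and the bound on $r_1\cdots r_d$ depend on $d$ and $C$ but not on $X$, which is transparent from the construction.
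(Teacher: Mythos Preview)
Your proof is correct and substantially more direct than the paper's. The paper takes a long detour: it introduces $\phi(r_1,\dots,r_d) = \limsup_{\la\to0} f(\la;r_1,\dots,r_d)$, proves through Lemmas~\ref{4.3}--\ref{4.5} that $\phi$ is increasing and satisfies a midpoint-subadditivity inequality, concludes $\phi$ is constant, then evaluates it at the unit cube (Lemma~\ref{4.6}) to get $\phi\equiv0$. Only after all that does it invoke a variant of \eqref{***} (via the auxiliary inequality \eqref{4.8}) and use monotonicity once more to make the resulting $\eps$ uniform over $r_i\in[\tfrac12 C,C]$. You bypass this entire chain by covering $B$ with $N=\lceil C\rceil^d$ integer-translated unit cubes and applying H\"older across them to obtain $\Ex\exp\tfrac1N\int_B|X_t|\,\D t\le 2$ directly; then \eqref{***} and the lower bound $r_1\cdots r_d\ge(C/2)^d$ finish the job in one stroke, with explicit $\eps=(C/2)^{d/2}/N$. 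Your route is shorter and gives an explicit constant; the paper's route yields the qualitative statement $\limsup_{\la\to0} f(\la;r_1,\dots,r_d)=0$ for \emph{all} $r_i>0$, and its monotonicity/subadditivity machinery would survive in settings where no simple covering argument is available --- but neither of those byproducts is used elsewhere in this paper.
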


\begin{sloppypar}
We introduce $ \phi : (0,\infty)^d \to [0,\infty] $ by
\[
\phi(r_1,\dots,r_d) = \limsup_{\la\to0} f(\la;r_1,\dots,r_d)
\]
and prove that this function is identically zero. Note that $
\phi(r_1,\dots,r_d) = \limsup_{\la\to0} f(\la\sqrt{r_1,\dots,r_d};
r_1,\dots,r_d) $, and $ f(\la\sqrt{r_1,\dots,r_d}; r_1,\dots,r_d) = 
\sup_X \log \Ex \exp \la \int_{[0,r_1)\times\dots\times[0,r_d)} X_t \,
\D t = \log \sup_X \Ex \exp \la \int_{[0,r_1)\times\dots\times[0,r_d)}
X_t \, \D t $.
\end{sloppypar}

\begin{lemma}\label{4.3}
The function $ \phi $ is increasing.
\end{lemma}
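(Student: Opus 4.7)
The plan is to exploit the reformulation
\[
\phi(r_1,\dots,r_d) \;=\; \limsup_{\mu \to 0} \log \sup_X \Ex \exp \mu \int_{[0,r_1)\times\dots\times[0,r_d)} X_t \, \D t
\]
recorded just before the lemma (obtained from \eqref{4.1} by the change of variable $\la = \mu\sqrt{r_1\dots r_d}$), together with the fact that multiplication by an indicator preserves \splittability{1}. By the permutation symmetry \eqref{3.1}, it is enough to establish monotonicity in $r_1$ alone.

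Fix $r_1 \le r_1'$ together with $r_2,\dots,r_d > 0$, and set
\[
B \;=\; [0,r_1)\times[0,r_2)\times\dots\times[0,r_d) \;\subset\; B' \;=\; [0,r_1')\times[0,r_2)\times\dots\times[0,r_d).
\]
Given any \splittable{1} random field $X$ on $\R^d$, Corollary~\ref{2.3} applied with $\phi = \One_B$ (and $C_1 = C_2 = 1$) shows that $\One_B \cdot X$ is again \splittable{1}. Since $\int_{B'} (\One_B X)_t \, \D t = \int_B X_t \, \D t$, I would conclude
\[
\sup_X \Ex \exp \mu \int_{B'} X_t \, \D t \;\ge\; \sup_X \Ex \exp \mu \int_{B'} (\One_B X)_t \, \D t \;=\; \sup_X \Ex \exp \mu \int_B X_t \, \D t,
\]
each supremum taken over \splittable{1} random fields on $\R^d$; applying $\log$ and $\limsup_{\mu \to 0}$ then yields $\phi(r_1',r_2,\dots,r_d) \ge \phi(r_1,r_2,\dots,r_d)$.

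There is no real obstacle; the only point worth flagging is that the alternative (``unnormalized'') formulation is more convenient than \eqref{4.1} itself, since the normalizing factor $\sqrt{r_1\dots r_d}$ differs between the two sides of the comparison, whereas after reparametrizing by $\mu$ the two suprema are indexed by the same parameter and compare directly.
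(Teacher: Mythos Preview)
Your proof is correct and follows essentially the same route as the paper: both use Corollary~\ref{2.3} to replace a \splittable{1} field $X$ by $\One_B \cdot X$, observe that the integral over the larger box of the truncated field equals the integral over the smaller box of the original field, and then compare the unnormalized suprema before taking $\limsup$. The only cosmetic difference is that you reduce to a single coordinate via \eqref{3.1}, whereas the paper handles all coordinates at once; this changes nothing of substance.
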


\begin{proof}
Given $ r_1 \le s_1, \dots, r_d \le s_d $ and a \splittable{1} $X$, we
note that $ Y = X \cdot \One_{[0,r_1)\times\dots\times[0,r_d)} $ is
\splittable{1} by Corollary \ref{2.3}, and $
\int_{[0,s_1)\times\dots\times[0,s_d)} Y_t \,
\D t = \int_{[0,r_1)\times\dots\times[0,r_d)} X_t \, \D t $, whence $
\log \Ex \exp \la \int_{[0,r_1)\times\dots\times[0,r_d)} X_t \, \D t=\\
\log \Ex \exp \la \int_{[0,s_1)\times\dots\times[0,s_d)} Y_t \, \D t
\le f(\la\sqrt{s_1,\dots,s_d}; s_1,\dots,s_d) $; supremum in $X$ gives
$ f(\la\sqrt{r_1,\dots,r_d}; r_1,\dots,r_d) \le
f(\la\sqrt{s_1,\dots,s_d}; s_1,\dots,s_d) $; $ \limsup $ in $\la$ gives
$ \phi(r_1,\dots,r_d) \le \phi(s_1,\dots,s_d) $.
\end{proof}

\begin{sloppypar}
\begin{lemma}\label{4.4}
For arbitrary $ r_2,\dots,r_d \in (0,\infty) $ the function $ \psi : r
\mapsto \phi(r, r_2,\dots,r_d) $ satisfies $ \psi(r+s) \le
\( \psi(r)+\psi(s) \) / 2 $ for all $ r,s \in (0,\infty) $.
\end{lemma}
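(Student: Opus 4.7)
The plan is to split the box $[0,r+s)\times B$ (with $B=[0,r_2)\times\dots\times[0,r_d)$) at the hyperplane $t_1=r$, using an asymmetric analog of Prop.~\ref{2.13}, applying H\"older's inequality, and then passing to $\limsup_{\la\to0}$.

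First, I would prove an asymmetric version of Prop.~\ref{2.13}: for any $r,s>0$ and \splittable{1} $X$, replace $X$ by $X\cdot\One_{[-r,s)\times\R^{d-1}}$ (still \splittable{1} by Cor.~\ref{2.3}) and apply Def.~\ref{1.3}(c) with $k=d$, $i=1$, and shift $0$; the argument of Prop.~\ref{2.13} then goes through verbatim with $[-r,s)$ replacing $[-r,r)$, producing a \splittable{1} field $Y$ on $\R^{d-1}$ and random variables
\[
U\sim\int_{[-r,0)\times B}X_t\,\D t,\;\;V\sim\int_{[0,s)\times B}X_t\,\D t,\;\;W\sim\int_{[-r,s)\times B}X_t\,\D t,\;\;Z\sim\int_B Y_u\,\D u,
\]
with $U,V$ independent and $W+Z=U+V$. (For $d=1$ Remark~\ref{2.19} plays the role of Prop.~\ref{2.13}.)

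Next, Lemma~\ref{4.1ab} with $p=2$, applied to $\la(U+V)$ and $-\la Z$ (whose sum is $\la W$), combined with the independence of $U,V$, yields
\[
\log\Ex\exp\la W\le\tfrac12\log\Ex\exp 2\la U+\tfrac12\log\Ex\exp 2\la V+\tfrac12\log\Ex\exp(-2\la Z).
\]
Shift invariance (Cor.~\ref{2.9}) turns $U$ and $V$ into integrals over $[0,r)\times B$ and $[0,s)\times B$ of shifted \splittable{1} fields. Setting $v=r_2\cdots r_d=\vol B$, substituting $\la=\mu/\sqrt{(r+s)v}$, and taking $\sup_X$, I obtain
\[
f(\mu;r+s,r_2,\dots,r_d)\le\tfrac12 f\Big(2\mu\sqrt{\tfrac{r}{r+s}};r,r_2,\dots,r_d\Big)+\tfrac12 f\Big(2\mu\sqrt{\tfrac{s}{r+s}};s,r_2,\dots,r_d\Big)+\tfrac12 f\Big(-\tfrac{2\mu}{\sqrt{r+s}};r_2,\dots,r_d\Big),
\]
the last $f$ being $(d-1)$-dimensional (interpreted via \eqref{4.1aaa} and \eqref{***} when $d=1$).

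Finally, I would take $\limsup_{\mu\to0}$. The first two terms contribute $\tfrac12\psi(r)$ and $\tfrac12\psi(s)$ because their arguments tend to $0$. The third term tends to $0$: when $d=1$ this is immediate from \eqref{4.1aaa}, and when $d\ge 2$ it follows by induction on $d$, the induction hypothesis being that Prop.~\ref{4.2} holds in dimension $d-1$, so that $\phi\equiv 0$ there. This yields $\psi(r+s)\le\tfrac12(\psi(r)+\psi(s))$. The main obstacle is the asymmetric analog of Prop.~\ref{2.13}, but its proof is essentially a line-by-line copy of the symmetric one with $[-r,s)$ in place of $[-r,r)$, so no new ideas are needed.
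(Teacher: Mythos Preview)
Your argument is correct, but it is considerably more elaborate than the paper's. The paper does not use the split machinery (Prop.~\ref{2.13} or any asymmetric variant) at all: it simply writes
\[
\int_{[0,r+s)\times B} X_t\,\D t = \int_{[0,r)\times B} X_t\,\D t + \int_{[r,r+s)\times B} X_t\,\D t
\]
and applies Cauchy--Schwarz directly to $\Ex\exp\la(\cdot)$, with \emph{no independence required}. After shift invariance and taking $\sup_X$ this gives a clean two-term inequality
\[
f(\la;r+s,r_2,\dots,r_d)\le\tfrac12 f(2\la;r,r_2,\dots,r_d)+\tfrac12 f(2\la;s,r_2,\dots,r_d),
\]
and $\limsup_{\la\to0}$ finishes immediately. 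Your route, by insisting on independent $U,V$, introduces the leak term $Z$ and hence a third $(d-1)$-dimensional summand, which you then have to kill by restructuring Lemmas~\ref{4.3}--\ref{4.6} and Prop.~\ref{4.2} as an induction on $d$. That restructuring is legitimate, but the paper's point is precisely that for this particular lemma the independence (and hence the whole split/leak apparatus and the dimensional induction) is unnecessary.
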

\end{sloppypar}

\begin{proof}
Denoting $ B = [0,r_2)\times\dots\times[0,r_d) \subset \R^{d-1} $ we have\\
$ \Ex \exp \la \int_{[0,r+s)\times B} X_t \, \D t = \Ex \exp
\( \la \int_{[0,r)\times B} X_t \, \D t + \la \int_{[r,r+s)\times B}
X_t \, \D t \) \le\\
\( \Ex \exp 2\la \int_{[0,r)\times B} X_t
\, \D t \)^{1/2} \( \Ex \exp 2\la \int_{[r,r+s)\times B} X_t \, \D
t \)^{1/2} \le \\
\exp \frac12 \( f(2\la; r,r_2,\dots,r_d) + f(2\la;
s,r_2,\dots,r_d) \) $; supremum in $X$ gives\\
$ f(\la;
r+s,r_2,\dots,r_d) \le \frac12 \( f(2\la; r,r_2,\dots,r_d) + f(2\la;
s,r_2,\dots,r_d) \) $, and then\\
$ \limsup $ in $\la$ gives $ \psi(r+s) \le \frac12 \(
\psi(r) + \psi(s) \) $.
\end{proof}

\begin{lemma}\label{4.5}
The function $ \phi $ is constant.
\end{lemma}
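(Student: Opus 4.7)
My plan is to combine the monotonicity from Lemma~\ref{4.3} with the ``midpoint'' inequality from Lemma~\ref{4.4} via a simple doubling trick. I first fix $ r_2,\dots,r_d \in (0,\infty) $ arbitrarily and set $ \psi(r) = \phi(r,r_2,\dots,r_d) $ for $ r \in (0,\infty) $. Specializing Lemma~\ref{4.4} to $ s = r $ gives $ \psi(2r) \le \psi(r) $, while Lemma~\ref{4.3} yields the reverse inequality $ \psi(r) \le \psi(2r) $. Hence $ \psi(2r) = \psi(r) $ for every $ r>0 $.

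Combined with monotonicity this forces $ \psi $ to be constant on each dyadic interval: if $ r \le s \le 2r $, then $ \psi(r) \le \psi(s) \le \psi(2r) = \psi(r) $, so $ \psi(s) = \psi(r) $. Since $ (0,\infty) $ is covered by the chain $ \{ [2^n r_0, 2^{n+1} r_0] : n \in \Z \} $ (for any fixed $ r_0 > 0 $) with consecutive intervals sharing an endpoint, $ \psi $ is constant on $ (0,\infty) $. Thus $ \phi(r_1,r_2,\dots,r_d) $ is independent of $ r_1 $.

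Finally, condition \eqref{3.1} (ensured via Corollary~\ref{2.6}) makes $ f $, and hence $ \phi $, symmetric in its arguments. Repeating the argument above in each coordinate in turn shows $ \phi $ is independent of every $ r_i $, so $ \phi $ is constant on $ (0,\infty)^d $, as required.

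The argument is short and I do not foresee a real obstacle; the only point requiring minor attention is that $ \psi $ may take the value $ +\infty $, but monotonicity and the midpoint inequality both make sense in $ [0,+\infty] $, so the chain $ \psi(r) \le \psi(s) \le \psi(2r) = \psi(r) $ still forces $ \psi(s) = \psi(r) $ even when the common value is infinite.
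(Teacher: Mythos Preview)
Your proof is correct and follows essentially the same approach as the paper: both combine the monotonicity of Lemma~\ref{4.3} with the midpoint inequality of Lemma~\ref{4.4} to force $\psi$ constant, then invoke symmetry \eqref{3.1} for the remaining coordinates. The only cosmetic difference is that the paper iterates $\psi(s)\le\psi(r)$ outward to all $s$, whereas you first pin down $\psi(2r)=\psi(r)$ and then fill in each dyadic interval via monotonicity.
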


\begin{proof}
We'll prove that the function $ \psi $ (introduced in Lemma \ref{4.4})
is constant; this is sufficient due to \eqref{3.1}. For every $ r \in
(0,\infty) $, first, $ \psi(s) \le \psi(r) $ for all $ s \in (0,r] $
by Lemma \ref{4.3}; second, $ \psi(s) \le \psi(r) $ for all $ s \in
(0,2r] $ by Lemma \ref{4.4}; and so on; $ \psi(s) \le \psi(r) $ for
all $ s \in (0,\infty) $ and all $ r \in (0,\infty) $.
\end{proof}

\begin{lemma}\label{4.6}
$ \phi(r_1,\dots,r_d) = 0 $ for all $ r_1,\dots,r_d \in (0,\infty) $.
\end{lemma}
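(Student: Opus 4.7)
\smallskip

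The plan is to establish the inequality $\phi(1,\dots,1) = 0$ by a direct computation, and then invoke Lemma~\ref{4.5} to propagate this value to all of $(0,\infty)^d$.

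First, I would unpack the definition of $f$ at the point $(1,\dots,1)$: since $\sqrt{r_1\cdots r_d} = 1$ there, one has
\[
f(\la;1,\dots,1) = \sup_X \log \Ex \exp \la \int_{[0,1)^d} X_t \, \D t \, ,
\]
the supremum taken over all \splittable{1} random fields $X$ on $\R^d$. Fix such an $X$ and set $Z = \int_{[0,1)^d} X_t\,\D t$. By Def.~\ref{1.3}(a) applied with $k=d$ and $s=0$, we have $\Ex \exp \int_{[0,1)^d} |X_t|\,\D t \le 2$. Since $|Z| \le \int_{[0,1)^d} |X_t|\,\D t$, this gives $\Ex \exp |Z| \le 2$; and Def.~\ref{1.3}(b) yields $\Ex Z = 0$. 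The general fact \eqref{***} now applies and gives $\log \Ex \exp \la Z \le \la^2$ for all $\la \in [-1,1]$. Taking the supremum over $X$, I obtain
\[
f(\la;1,\dots,1) \le \la^2 \quad \text{for all } \la \in [-1,1] \, .
\]

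Taking $\limsup_{\la \to 0}$ shows $\phi(1,\dots,1) \le 0$; since $f$ (and hence $\phi$) is nonnegative, $\phi(1,\dots,1) = 0$. By Lemma~\ref{4.5} the function $\phi$ is constant on $(0,\infty)^d$, so $\phi \equiv 0$, as required.

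There is no real obstacle: once Lemmas~\ref{4.3}--\ref{4.5} are established, the only remaining work is to exhibit one point where $\phi$ vanishes, and the unit cube is the natural choice because Def.~\ref{1.3}(a) normalizes exactly the exponential moment over a unit cube, and the bound \eqref{***} converts this together with the zero-mean property of Def.~\ref{1.3}(b) into a quadratic cumulant estimate for small $\la$.
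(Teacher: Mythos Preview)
Your proof is correct and follows essentially the same approach as the paper: reduce via Lemma~\ref{4.5} to showing $\phi(1,\dots,1)=0$, then use Def.~\ref{1.3}(a,b) together with \eqref{***} to bound $f(\la;1,\dots,1)\le\la^2$ on $[-1,1]$. Your explicit mention of the nonnegativity of $f$ to close the argument is a small clarification the paper leaves implicit.
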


\begin{proof}
By Lemma \ref{4.5} it is sufficient to prove that $ \phi(1,\dots,1) =
0 $. By Def.~\ref{1.3}(a) (for $k=d$), $ \Ex \exp \int_{[0,1)^d} |X_t|
\, \D t \le 2 $ for every \splittable{1} $X$. We apply \eqref{***},
taking into account that $ \Ex \int_{[0,1)^d} X_t \, \D t = 0
$ by \ref{1.3}(b) and $ | \int_{[0,1)^d} X_t \, \D t | \le
\int_{[0,1)^d} |X_t| \, \D t $; we get $ \log \Ex \exp \la
\int_{[0,1)^d} X_t \, \D t \le \la^2 $ for all $ \la \in [-1,1]
$. Thus, $ f(\la;1,\dots,1) \le \la^2 $ for these $\la$.
\end{proof}

\begin{proof}[Proof of Prop.~\ref{4.2}] \let\qed\relax
First, a general
fact. For arbitrary random variable $W$,
\begin{equation}
\text{if} \;\; \Ex \E^W \le \frac54 \;\; \text{and} \;\; \Ex \E^{-W} \le
\frac54 \, ,
\quad \text{then} \quad \Ex \E^{|W|} \le 2 \, ,
\end{equation}
since $ \E^{|W|} = \frac43 ( \E^{|W|} + \E^{-|W|} - 1) - \frac13 (
\E^{|W|/2} - 2 \E^{-|W|/2} )^2 \le \frac43 ( \E^{|W|} + \E^{-|W|} - 1) =
\frac43 ( \E^W + \E^{-W} - 1) $.

Therefore, by \eqref{***},
\begin{multline}\label{4.8}
\text{if} \;\; \Ex \E^W \le \frac54, \;\; \Ex \E^{-W} \le \frac54
 \;\; \text{and} \;\; \Ex W = 0 \, , \quad \text{then} \\
\forall \la \in [-1,1] \;\; \log \Ex \exp \la W \le \la^2  \, .
\end{multline}

Applying \eqref{4.8} to $ W = \frac{\eps}{\sqrt{r_1\dots r_d}}
\int_{[0,r_1)\times[0,r_d)} X_t \, \D t $ (and taking supremum in $X$)
we get, for arbitrary $ \eps>0 $,
\[
\text{if} \;\; \exp f(\pm\eps; r_1,\dots,r_d ) \le \frac54 \, , \quad
\text{then} \quad
\forall \la \in [-1,1] \;\; f(\eps\la; r_1,\dots,r_d) \le \la^2  \, ,
\]
that is,
\begin{equation}\label{4.9}
\forall \la \in [-\eps,\eps] \;\; \eps^2 f(\la; r_1,\dots,r_d) \le
\la^2  \, .
\end{equation}
Due to Lemma \ref{4.6}, for arbitrary $ r_1,\dots,r_d $ there exists $
\eps>0 $ such that $ f(\pm\eps; r_1,\dots,r_d ) \le \log\frac54 $, and
therefore \eqref{4.9} holds.

\begin{sloppypar}
In order to obtain a single $\eps$ for all $ r_1,\dots,r_d \in
[\frac12 C, C] $ we recall that $ f( \la \sqrt{r_1\dots r_d};
r_1,\dots,r_d ) $ is an increasing function of $ r_1,\dots,r_d $ (as
noted in the proof of Lemma \ref{4.3}). We take $\eps$ such that\\
$ \forall \la \in [-\eps,\eps] \;\; \eps^2 f(\la; C,\dots,C) \le
\la^2 $, that is, $ \eps^2 f(\la \sqrt{C^d}; C,\dots,C ) \le C^d \la^2
$ for $ |\la| \le C^{-d/2} \eps $; then $ \eps^2 f( \la \sqrt{r_1\dots
r_d}; r_1,\dots,r_d ) \le C^d \la^2 $ for $ |\la| \le C^{-d/2} \eps
$, that is, $ \eps^2 f(\la;r_1,\dots,r_d) \le \frac{C^d}{r_1\dots r_d}
\la^2 $ for $ |\la| \le \sqrt{r_1\dots r_d} C^{-d/2} \eps $, which
implies $ (2^{-d/2} \eps )^2 f(\la;r_1,\dots,r_d) \le \la^2 $ for $
|\la| \le 2^{-d/2} \eps $. \qquad\qquad\qquad\qquad\qquad\qquad\rlap{$\qedsymbol$}
\end{sloppypar}
\end{proof}

We summarize.

\begin{theorem}\label{4.10}
The function $f$ defined by \eqref{4.1} satisfies condition $\A_d$ for
all $d$.
\end{theorem}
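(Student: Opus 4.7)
The plan is to reduce this theorem to verifications already carried out in this section, splitting into the cases $d=0$ and $d\ge 1$. For $d\ge 1$, condition $\A_d$ unpacks as the conjunction of the permutation symmetry \eqref{3.1}, the duplication inequality \eqref{3.2}, and the small-parameter bound \eqref{3.3}, so I would verify these three items in turn. Condition \eqref{3.1} is immediate from Corollary \ref{2.6}: the class of \splittable{1} random fields on $\R^d$ is closed under coordinate permutations, hence if we permute $(r_1,\dots,r_d)$ in the defining supremum \eqref{4.1}, the family of competing random fields is merely relabeled and the supremum value is unchanged. Conditions \eqref{3.2} and \eqref{3.3} are exactly the contents of Propositions \ref{4.1aa} and \ref{4.2}, whose proofs are already complete.

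For $d=0$, condition $\A_0$ is just \eqref{3.4}, which I would verify directly from the definition \eqref{4.1aaa}: taking $\eps=1$, we have $\eps^2 f(\la) = f(\la) = \la^2$ for every $\la\in[-1,1]$, which is exactly \eqref{3.4}.

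Since each constituent condition is either an immediate consequence of the definitions or one of the propositions just established, there is no substantive obstacle; Theorem \ref{4.10} is a summary statement assembling the work of this section, and its proof consists merely of citing the correct items.
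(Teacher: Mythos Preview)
Your proposal is correct and matches the paper's approach exactly: the paper presents Theorem~\ref{4.10} under the heading ``We summarize,'' treating it as an assembly of the already-proved items (Corollary~\ref{2.6} for \eqref{3.1}, Prop.~\ref{4.1aa} for \eqref{3.2}, Prop.~\ref{4.2} for \eqref{3.3}), with $\A_0$ being immediate from \eqref{4.1aaa}.
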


\begin{corollary}\label{4.11}
For each $d$ there exists $C_d$ such that $ f(\la;r_1,\dots,r_d) \le
C_d \la^2 $ whenever $ C_d |\la| \le \sqrt{r_1\dots r_d} \log^{-d}
(r_1\dots r_d) $ and $ \min(r_1,\dots,r_d) \ge C_d $; here $f$ is
defined by \eqref{4.1}.
\end{corollary}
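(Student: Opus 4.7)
The plan is to observe that Corollary \ref{4.11} is nothing but condition $\B_d$ (for all $d$) applied to the specific function $f$ defined in \eqref{4.1}, and hence follows by combining the two preceding ingredients of this section with the abstract Theorem \ref{II-1.5}.

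First I would recall that Theorem \ref{II-1.5} is precisely the implication
\[
(\forall d \;\> \A_d) \;\Longrightarrow\; (\forall d \;\> \B_d),
\]
and that the statement of $\B_d$ is verbatim the conclusion we want: existence of a constant $C_d$ with $f(\la;r_1,\dots,r_d) \le C_d \la^2$ whenever $C_d |\la| \le \sqrt{r_1\dots r_d}\log^{-d}(r_1\dots r_d)$ and $\min(r_1,\dots,r_d)\ge C_d$. So the only thing to check is that the specific $f$ from \eqref{4.1} satisfies the hypothesis $\A_d$ for every $d$.

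That hypothesis has, however, already been verified: condition \eqref{3.1} (permutation invariance) holds by Corollary \ref{2.6}; condition \eqref{3.2} (duplication inequality) is Proposition \ref{4.1aa}; condition \eqref{3.3} (quadratic control near the origin on bounded scales) is Proposition \ref{4.2}; and for $d=0$, condition $\A_0 = \B_0$ is built into the very definition \eqref{4.1aaa}. Together these are summarized in Theorem \ref{4.10}.

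Thus the proof is a one-line application: apply Theorem \ref{II-1.5} to the function $f$ of \eqref{4.1}, using Theorem \ref{4.10} to supply its hypothesis, and read off $\B_d$ as the stated conclusion. There is no obstacle here; the whole point of the abstract reformulation in Section~\ref{sect3} was to isolate the combinatorial/analytic content in Theorem \ref{II-1.5} so that, once the splittability-specific verifications of Section~\ref{sect4} are complete, the desired cumulant bound drops out immediately.
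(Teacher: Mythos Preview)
Your proposal is correct and matches the paper's own proof exactly: the paper simply says ``Combine theorems \ref{II-1.5} and \ref{4.10},'' which is precisely the one-line application you describe. Your additional unpacking of what $\A_d$ and $\B_d$ mean, and how Theorem \ref{4.10} assembles the pieces from Corollary \ref{2.6}, Proposition \ref{4.1aa}, Proposition \ref{4.2}, and \eqref{4.1aaa}, is accurate and helpful but not strictly needed for the proof itself.
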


\begin{proof}
Combine theorems \ref{II-1.5} and \ref{4.10}.
\end{proof}

\section[Proving the main result]
  {\raggedright Proving the main result}
\label{sect5}
The term ``splittable'' is defined both here and in \cite{II}, and
the definitions are nonequivalent. In order to avoid ambiguity,
below we use either ``\splittable{C}'' (defined here by
Def.~\ref{1.4}(b) and undefined in \cite{II}) or ``uniformly
splittable'' (defined in \cite[Def.~1.4]{II} and undefined here).

Every \splittable{C} random field $X$ on $ \R^d $ leads to
a centered random field $ \hat X $ on
$ \R^d $ as defined in \cite[Sect.~1]{II}; it is just the family of
integrals $ \int_{[\al_1,\be_1)\times\dots\times[\al_d,\be_d)}
X_t \, \D t $ indexed by boxes $
[\al_1,\be_1]\times\dots\times[\al_d,\be_d] \subset \R^d $; that is,
\[
\int_{[\al_1,\be_1]\times\dots\times[\al_d,\be_d]} \hat X_t \, \D t
= \int_{[\al_1,\be_1)\times\dots\times[\al_d,\be_d)} X_t \, \D t
\]
whenever $ \al_1<\be_1, \dots, \al_d<\be_d $. (If puzzled, note that
the left-hand side is nothing but a conventional notation used
in \cite{II} and \cite{III}; half-open intervals could be used there equally well.)
The additivity \cite[(1.1)]{II} is evidently satisfied. The zero mean
condition \cite[(1.2)]{II} is ensured by Def.~\ref{1.3}(b).

If, in addition, $X$ is stationary, then $ \hat X $ is a centered
measurable stationary (``CMS'') random field on $ \R^d $ as defined
in \cite[Sect.~1]{II} (see also \cite[Sect.~1]{III}).
Stationarity is evident.

\begin{lemma}
The measurability condition \cite[(1.3)]{II} is satisfied.
\end{lemma}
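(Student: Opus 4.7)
The condition \cite[(1.3)]{II} calls for joint Borel measurability of the map
$\Phi:(\al,\be,\om)\mapsto\int_{[\al_1,\be_1)\times\dots\times[\al_d,\be_d)}X_t(\om)\,\D t$,
viewed on $\{(\al,\be)\in\R^{2d}:\al_i<\be_i\text{ for all }i\}\times\Om$. The plan is to reduce this to the generating functionals of the $\si$-field on LFSBMs, by approximating $\One_{[\al,\be)}(\cdot)$ by jointly continuous test functions in $(\al,\be,t)$.

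As a warm-up I would first verify that, for every jointly continuous $F:\R^{2d}\times\R^d\to\R$ with support in a compact product $K_0\times K_1$, the map $(\al,\be,\om)\mapsto\int F(\al,\be,t)\,X_t(\om)\,\D t$ is jointly Borel measurable. For each fixed $(\al,\be)$, measurability in $\om$ is immediate from the definition of random LFSBM, since $t\mapsto F(\al,\be,t)$ is then a compactly supported continuous function. For each fixed $\om$ on the a.s.\ event $\{|X|(K_1)<\infty\}$, continuity in $(\al,\be)$ follows from dominated convergence together with the uniform continuity of $F$ on $K_0\times K_1$. The Carath\'eodory criterion (separate measurability in $\om$ plus continuity in the separable parameter $(\al,\be)$) then yields joint Borel measurability on the full-measure subset, and we extend by $0$ elsewhere.

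Next I would promote this to box indicators. Fix a compact cube $K\subset\R^d$ and construct jointly continuous $F_n:\R^{2d}\times\R^d\to[0,1]$, supported in a single fixed compact, with $F_n(\al,\be,t)\to\One_{[\al_1,\be_1)\times\dots\times[\al_d,\be_d)}(t)\,\One_K(t)$ pointwise in $t$ (such $F_n$ are easily built by products of one-dimensional continuous bump approximations of $\One_{[\al_i,\be_i)}$). On the a.s.\ event $\{|X|(K)<\infty\}$, dominated convergence gives $\int F_n(\al,\be,t)\,X_t(\om)\,\D t\to\int_{[\al_1,\be_1)\times\dots\times[\al_d,\be_d)\cap K}X_t(\om)\,\D t$ pointwise in $(\al,\be,\om)$, so the limit inherits joint measurability from the approximants. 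Letting $K$ run through an exhausting sequence of cubes and passing to the pointwise limit once more recovers $\Phi$.

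No step is genuinely hard; the main technical points are the Carath\'eodory joint-measurability fact (elementary from separability of $\R^{2d}$) and the a.s.\ local finiteness of $|X|$, which is in any case guaranteed by Def.~\ref{1.3}(a). The rest is routine dominated convergence, and once $\Phi$ is jointly measurable the measurability condition \cite[(1.3)]{II} follows at once.
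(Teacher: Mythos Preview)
Your argument is correct, but note that what you verify may be stronger than what \cite[(1.3)]{II} actually demands: the paper's own proof works purely at the level of distributions and concludes by showing that the distribution of $\int_{[0,r_1)\times\dots\times[0,r_d)}X_t\,\D t$, as a function of $(r_1,\dots,r_d)$, is continuous, hence Borel measurable. The key tool there is stationarity: almost-sure convergence $\int_B X_t\,\D t\to\int_{B_0}X_t\,\D t$ is immediate when the box $B$ approaches $B_0$ monotonically from one side (dominated convergence against $|X|$), and stationarity then converts this one-sided a.s.\ convergence into two-sided convergence in distribution.

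Your route, by contrast, establishes pathwise joint measurability in $(\al,\be,\om)$ via the Carath\'eodory criterion and approximation by continuous test functions; this avoids stationarity entirely and implies the distribution-level statement by Fubini. The paper's approach is shorter and tailored to the CMS framework; yours is more robust and would apply even without shift invariance. One small point to make explicit in your approximation step: since the random signed measure may carry atoms, you need $F_n(\al,\be,\cdot)\to\One_{[\al_1,\be_1)\times\dots\times[\al_d,\be_d)}(\cdot)$ pointwise \emph{everywhere} in $t$ (not merely Lebesgue-a.e.) for dominated convergence against $|X|(\om)$ to apply; the obvious piecewise-linear construction does achieve this, but it deserves a word.
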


\begin{proof}
For arbitrary $ u_1,\dots,u_d, \, v_1,\dots, v_d \in [0,\infty) $ the
symmetric difference $ B \De B_0 $ between boxes $ B =
[\al_1-u_1,\be_1-v_1)\times\dots\times[\al_d-u_d,\be_d-v_d) $ and $
B_0 = [\al_1,\be_1)\times\dots\times[\al_d,\be_d) $ satisfies $ B \De
B_0 = ( B \cup B_0 ) \setminus ( B \cap B_0 ) \subset B_2 \setminus
B_1 $ where $ B_2 =
[\al_1-u_1,\be_1)\times\dots\times[\al_d-u_d,\be_d) $ and $ B_1 =
[\al_1,\be_1-v_1)\times\dots\times[\al_d,\be_d-v_d) $. Also, $
B_2 \setminus B_1 \subset (B_2\setminus B) \cup (B\setminus B_1) $
(equal, in fact). Thus, $ | \int_B X_t \, \D t - \int_{B_0} X_t \, \D
t | \le \int_{B_2\setminus B} |X_t| \, \D t +  \int_{B\setminus B_1}
|X_t| \, \D t $. It follows that $ \int_B X_t \, \D t $ converges to
$ \int_{B_0} X_t \, \D t $ (as $ u_1,\dots,u_d, v_1,\dots,v_d \to 0+
$) almost surely, therefore, in distribution. By stationarity,
$ \int_{[\al_1,\be_1+u_1-v_1) \times \dots \times
[\al_d,\be_d+u_d-v_d)} X_t \, \D t $ is distributed like $ \int_B
X_t \, \D t $ and therefore converges to $ \int_{B_0}
X_t \, \D t $ in distribution. It shows that
$ \int_{[\al_1,\be_1+w_1)\times\dots\times[\al_d,\be_d+w_d)} X_t \, \D
t $ converges to $ \int_{B_0} X_t \, \D t $ in distribution as $
w_1,\dots,w_d \to 0 $ (not ``$\to0+$'' this time). Thus, the
distribution of $ \int_{[0,r_1)\times\dots\times[0,r_d)} X_t \, \D t $
as a function of $ r_1,\dots,r_d $ is continuous, therefore Borel
measurable.
\end{proof}

For every $ d \in \{1,2,\dots\} $ and $ C \in (0,\infty) $ we consider
the set $ \Spl_C (d) $ of all \splittable{C} random fields
on $ \R^d $, and the family $ ( \hat X )_{X\in\Spl_C(d)} $ of the
corresponding centered random fields on $ \R^d $.

Uniform splittability is defined (in \cite{II}) by recursion in
dimension $ d \in \{0,1,2,\dots\} $, treating a centered random field on
$ \R^0 $ as just a single random variable of zero mean. Accordingly we
define $ \Spl_C(0) $ as the set of all random variables $X$ such that
$ \Ex \exp \frac1C |X| \le 2 $ and $ \Ex X = 0 $. (See also the last
line of Def.~\ref{1.3}(c).) For $ X \in \Spl_C(0) $ we define just
$ \hat X = X $.

\begin{proposition}\label{5.2}
For every $ d \in \{0,1,2,\dots\} $ and $ C \in (0,\infty) $ the family
$ ( \hat X )_{X\in\Spl_C(d)} $ is uniformly splittable.
\end{proposition}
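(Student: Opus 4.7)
The plan is induction on $d$, following the recursive structure of uniform splittability in \cite[Def.~1.4]{II}. The needed facts are all available: Def.~\ref{1.3}(a,b) give the moment and centering properties, Prop.~\ref{2.13} gives the crucial split-with-leak decomposition, Lemma \ref{2.12b} and Remark \ref{2.12c} confirm that the leak of a $C$-splittable field, integrated along one coordinate, is again $C$-splittable in dimension $d-1$, and Corollaries \ref{2.6}, \ref{2.9} let us freely permute coordinates and shift the splitting hyperplane.

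For the base case $d=0$: by definition $\Spl_C(0)$ is the set of random variables $X$ with $\Ex\exp\frac1C|X|\le 2$ and $\Ex X=0$, and $\hat X=X$. This matches verbatim the dimension-zero condition of uniform splittability in \cite{II}, so the assertion holds essentially by definition.

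For the inductive step, assume the result in dimension $d-1$. Fix $C\in(0,\infty)$ and any $X\in\Spl_C(d)$. The uniform exponential-moment bound on unit boxes comes from Def.~\ref{1.3}(a) applied to $\frac1C X$, which is $(1,d)$-splittable, with a constant that does not depend on $X$. The centering condition is Def.~\ref{1.3}(b), which, as already observed above, is precisely \cite[(1.2)]{II}. For the substantive clause, we must, given any box $B\subset\R^d$ and any coordinate direction, decompose $W=\int_B X_t\,\D t$ as $U+V=W+Z$ with $U,V$ independent and distributed like the integrals of $X$ over the two halves of $B$, and with $Z$ coming from a uniformly splittable leak-family in dimension $d-1$. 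By permutation invariance (Corollary \ref{2.6}) it suffices to split along the first coordinate; by shift invariance (Corollary \ref{2.9}) we may assume the split is at $0$ so that the box has the form $[-r,r)\times B'$ with $B'\subset\R^{d-1}$ a box. Proposition \ref{2.13} then furnishes $U,V,W,Z$ with $Z=\int_{B'}Y_s\,\D s$ for some $Y\in\Spl_C(d-1)$. By the inductive hypothesis, $(\hat Y)_{Y\in\Spl_C(d-1)}$ is uniformly splittable, so $\hat Y$ sits in the correct lower-dimensional leak-family.

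The main obstacle is to align the parameterization of leak families: uniform splittability in \cite{II} demands that a \emph{single} uniformly splittable family in dimension $d-1$ serve as the leak-family for \emph{every} member of the family in dimension $d$, for \emph{every} splitting hyperplane and box. Proposition \ref{2.13} meets this demand because the leak $Y$ always lies in $\Spl_C(d-1)$ with the \emph{same} constant $C$ as $X$, so $\hat Y$ lies in the uniformly splittable family $(\hat Y)_{Y\in\Spl_C(d-1)}$ already established by induction. The only remaining bookkeeping is to reconcile the convention of \cite{II}, where splitting is allowed along any coordinate hyperplane $\{t_k=r\}$, with our convention of splitting only along $\{t_1=0\}$; this reconciliation is exactly what Corollaries \ref{2.6} and \ref{2.9} provide, and it is what the footnotes to Def.~\ref{1.2}(a) and Def.~\ref{1.3}(c) anticipate.
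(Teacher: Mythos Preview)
Your inductive strategy, the base case, the use of permutation and shift invariance, and the identification of $(\hat Y)_{Y\in\Spl_C(d-1)}$ as the leak family are all correct and match the paper. There is, however, a genuine gap in the inductive step.

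You pivot to Proposition~\ref{2.13} as the engine of the argument, but that proposition begins by replacing $X$ with $X\cdot\One_{[-r,r)\times\R^{d-1}}$. The split $(X^0,X^-,X^+)$ produced in its proof is therefore a split of this truncated field, and $(\widehat{X^0},\widehat{X^-},\widehat{X^0})$ is distributed like $\widehat{X\cdot\One_{[-r,r)\times\R^{d-1}}}$, not like $\hat X$. So it is not a split of $\hat X$ in the sense of \cite{II}, and you cannot use it to verify \cite[Def.~1.4(b)]{II}. Your box-by-box phrasing (``given any box $B\subset\R^d$ \dots decompose $W=\int_B X_t\,\D t$'') obscures this: what \cite{II} demands is a split of the \emph{random field} $\hat X$ along each hyperplane, whose leak on every strip $[a,b]\times\R^{d-1}$ lies in the lower family---not a separate decomposition for each box.

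The paper fixes this by working with the original $X$ throughout: Lemma~\ref{2.12b} together with Remark~\ref{2.12c} (which you cite in your plan but then abandon) yields a leak $Y$ for the \emph{unmodified} $X$ such that $U_{t_1,\dots,t_{d-1}}=\int_{[a,b)}Y_{s,t_1,\dots,t_{d-1}}\,\D s$ lies in $\Spl_C(d-1)$ for every $a<0<b$. Then Lemma~\ref{5.3}---which you never invoke---does the translation work, showing that the split $(X^0,X^-,X^+)$ underlying $Y$ induces a split of $\hat X$ in the \cite{II} sense whose leak on the strip $[a,b]\times\R^{d-1}$ is exactly $\hat U$. Your argument needs both of these in place of the appeal to Proposition~\ref{2.13}.
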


\begin{lemma}\label{5.3}
Let $X$ be a random field on $ \R^d $, $ (X^-,X^+,X^0) $ a split of
$X$, and $Y$ the leak of this split. Then, in terms of \cite{II}, $
(\widehat{X^-},\widehat{X^+},\widehat{X^0}) $ is a split of $ \hat X
$, and its leak along the hyperplane $ \{0\}\times\R^{d-1} $ on a
strip $ [a,b] \times \R^{d-1} $ (for arbitrary $ a<0 $, $ b>0 $) is
$ \hat U $ where $ U $ is a random field on $ \R^{d-1} $ defined by $
U_{t_2,\dots,t_d} = \int_{[a,b)} Y_{t_1,\dots,t_d} \, \D t_1 $.
\end{lemma}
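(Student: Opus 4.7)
The plan is to verify both assertions directly from the definitions, translating between the LFSBM formulation of this paper and the box-indexed formulation of \cite{II}.

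First, I would check that $(\widehat{X^-},\widehat{X^+},\widehat{X^0})$ is a split of $\hat X$ in the sense of \cite{II}. The map $X \mapsto \hat X$ sends an LFSBM to the collection of its box-integrals, and these box-integrals are measurable functions of $X$ that generate the $\sigma$-field on the space of LFSBMs (as noted in Sect.~\ref{sect1}). Hence the joint law of $(\hat X,\widehat{X^0},\widehat{X^-},\widehat{X^+})$ is a measurable image of the joint law of $(X,X^0,X^-,X^+)$; identical distribution of the latter quadruple transfers to the former, and independence of $X^-,X^+$ transfers to independence of $\widehat{X^-},\widehat{X^+}$. This is precisely the split property required in \cite{II}.

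Second, I would identify the leak on the strip $[a,b]\times\R^{d-1}$. Unpacking \cite{II}'s definition, this leak is the box-indexed centred random field on $\R^{d-1}$ whose value on any box $B'=[\al_2,\be_2]\times\dots\times[\al_d,\be_d]$ equals
\[
\int_{[a,0)\times B'}(\widehat{X^-}_t-\widehat{X^0}_t)\,\D t + \int_{[0,b)\times B'}(\widehat{X^+}_t-\widehat{X^0}_t)\,\D t.
\]
By Def.~\ref{1.2}(a) the two integrands coincide with $Y_t$ on the respective half-strips, so their sum equals $\int_{[a,b)\times B'} Y_t\,\D t$. Fubini rewrites this as $\int_{B'} U_{t_2,\dots,t_d}\,\D t_2\cdots\D t_d$, which by the very definition of $\hat{\cdot}$ is exactly the box-integral of $\hat U$ over $B'$. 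Since this agreement holds on every such box, the two box-indexed random fields coincide.

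The main obstacle is not mathematical but notational: one must carefully reconcile \cite{II}'s box-indexed additive formalism with the LFSBM formalism used here, and in particular verify that ``leak on a strip'' in \cite{II} is indeed the two-integral expression written above. Once the two formalisms are properly aligned, both parts of the lemma reduce to a routine measurability argument and a Fubini-style manipulation of integrals.
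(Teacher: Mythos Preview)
Your proposal is correct and follows essentially the same route as the paper: verify the split property by transporting distribution and independence along $X\mapsto\hat X$, then compute the leak on an arbitrary box $B'$ by unwinding the definition from \cite{II} and matching it to $\int_{[a,b)\times B'}Y_t\,\D t=\int_{B'}U$. The only cosmetic difference is that the paper writes the leak as $\int_{[a,0]\times B'}\widehat{X^-}+\int_{[0,b]\times B'}\widehat{X^+}-\int_{[a,b]\times B'}\widehat{X^0}$ (with closed intervals, per the convention in \cite{II}) before converting to half-open intervals, whereas you pass at once to the two differences $\widehat{X^-}-\widehat{X^0}$ and $\widehat{X^+}-\widehat{X^0}$; after the closed/half-open translation these are identical computations.
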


\begin{proof}\footnote{%
 The proof is straightforward and boring, but written out anyway.}
Clearly, $ (\widehat{X^-},\widehat{X^+},\widehat{X^0}) $ is a split of
$ \hat X $. Its leak $Z$ on the given strip is defined
(in \cite[p.~2]{II}) by
\begin{multline*}
\int_{[\al_2,\be_2]\times\dots\times[\al_d,\be_d]} Z(t) \, \D t =
 \int_{[a,0]\times[\al_2,\be_2]\times\dots\times[\al_d,\be_d]}
  \widehat{X^-}(t) \, \D t \; + \\
+ \int_{[0,b]\times[\al_2,\be_2]\times\dots\times[\al_d,\be_d]}
  \widehat{X^+}(t) \, \D t -
 \int_{[a,b]\times[\al_2,\be_2]\times\dots\times[\al_d,\be_d]}
  \widehat{X^0}(t) \, \D t \, .
\end{multline*}
The right-hand side is
\begin{multline*}
 \int_{[a,0)\times[\al_2,\be_2)\times\dots\times[\al_d,\be_d)}
   X^-(t) \, \D t \; +
  \int_{[0,b)\times[\al_2,\be_2)\times\dots\times[\al_d,\be_d)}
   X^+(t) \, \D t - \\
- \int_{[a,b)\times[\al_2,\be_2)\times\dots\times[\al_d,\be_d)}
   X^0(t) \, \D t =
 \int_{[a,b)\times[\al_2,\be_2)\times\dots\times[\al_d,\be_d)}
  Y(t) \, \D t
\end{multline*}
(by Def.~\ref{1.2}). On the other hand,
\begin{multline*}
\int_{[\al_2,\be_2]\times\dots\times[\al_d,\be_d]} \hat U(s) \, \D s =
 \int_{[\al_2,\be_2)\times\dots\times[\al_d,\be_d)} U(s) \, \D s = \\
= \int_{[a,b)\times[\al_2,\be_2)\times\dots\times[\al_d,\be_d)} Y_t \, \D t
\end{multline*}
whenever $ \al_2<\be_2, \dots, \al_d<\be_d $. Thus, $ Z = \hat U $.
\end{proof}

\begin{proof}[Proof of Prop.~\ref{5.2}]
Induction in $ d $. For $ d=0 $ we take $ \eps=1/C $ and get
$ \Ex \exp \eps |\hat X| \le 2 $ as required. For $ d \ge 1
$, Condition (a) of \cite[Def.~1.4]{II} (there take $ B = [0,1]^d $
and $ \eps=1/C $) follows from Condition (a) of Def.~\ref{1.3} here
(for $ \frac1C X $ and $ k=d $).

In order to check Condition (b) of \cite[Def.~1.4]{II} (for $ d \ge 1
$) we need splits of $ \hat X $ (indexed by $ k \in \{1,\dots,d\} $
and $ r \in \R $)
whose leaks (as defined there) are members of the family $ ( \hat Y
)_{Y\in\Spl_C(d-1)} $; this is sufficient, since this family is
uniformly splittable by the induction hypothesis.

WLOG, $ k=1 $ (the first coordinate) since,
first, $ \Spl_C(d) $ and $ \Spl_C(d-1) $ are permutation invariant (by
Corollary \ref{2.6}), and second, in \cite{II}, the class of all centered
random fields is permutation invariant, and leaks on different
coordinates turn into one another under coordinate permutations.
Likewise, WLOG, $ s=0 $ (use shift invariance).

Lemma \ref{2.12b} (in combination with Remark \ref{2.12c}) gives a
leak $ Y $ for $ X $ such that for all $ a<0 $, $ b>0 $ the random field $ U
$ on $ \R^{d-1} $ defined by $ U_{t_1,\dots,t_{d-1}} = \int_{[a,b)}
Y_{s,t_1,\dots,t_{d-1}} \, \D s $ is \splittable{C}, that is, belongs
to $ \Spl_C(d-1) $. By Lemma \ref{5.3}, the
leak of the corresponding split of $ \hat X $ along the hyperplane
$ \{0\}\times\R^{d-1} $ on $ [a,b]\times\R^{d-1} $ is $ \hat U $.
\end{proof}

\begin{corollary}\label{5.4}
(a) For every splittable\footnote{%
 According to Def.~\ref{1.4}(c).}
random field $X$ on $\R^d$, the corresponding
centered random field $ \hat X $ is splittable (in the sense
of \cite{II}, \cite{III}).

(b) For every splittable\footnote{%
 According to Def.~\ref{1.4}(c).}
stationary random field $X$ on $\R^d$, the
corresponding CMS random field $ \hat X $ is splittable (in the sense
of \cite{II}, \cite{III}).
\end{corollary}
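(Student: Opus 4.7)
The plan is to read off both parts directly from Proposition~\ref{5.2}, combined with the preparatory observations already made earlier in this section.

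For part~(a), if $X$ is splittable in the sense of Def.~\ref{1.4}(c), then by definition there exists $C \in (0,\infty)$ with $X \in \Spl_C(d)$. Proposition~\ref{5.2} asserts that the entire family $(\hat Y)_{Y \in \Spl_C(d)}$ is uniformly splittable in the sense of \cite[Def.~1.4]{II}; in particular the singleton subfamily $\{\hat X\}$ is uniformly splittable, which by the definition of ``splittable'' in \cite{II}, \cite{III} means exactly that $\hat X$ is splittable there.

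For part~(b), we additionally need $\hat X$ to be a CMS random field. Additivity \cite[(1.1)]{II} is immediate from the half-open-box formula defining $\hat X$; the zero-mean condition \cite[(1.2)]{II} is given by Def.~\ref{1.3}(b); stationarity of $\hat X$ is inherited from stationarity of $X$; and the measurability condition \cite[(1.3)]{II} is the content of the lemma preceding Prop.~\ref{5.2}. With $\hat X$ thus verified to be CMS, the argument of part~(a) produces the required splittability.

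The main obstacle is not in this corollary at all: the real work has been packaged into Prop.~\ref{5.2}, whose inductive proof handles the matching of splits and leaks as defined here (Def.~\ref{1.1}, \ref{1.2}) against the hyperplane-strip formulation of \cite{II} (via Lemma~\ref{5.3} together with Lemma~\ref{2.12b} and Remark~\ref{2.12c}). What remains at the level of Cor.~\ref{5.4} is purely bookkeeping that reconciles the two formalisms through their definitions, so no further technical difficulty is anticipated.
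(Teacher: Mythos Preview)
Your proposal is correct and matches the paper's approach: the paper states Corollary~\ref{5.4} without a separate proof, treating it as an immediate consequence of Proposition~\ref{5.2} together with the preliminary verifications (additivity, zero mean, stationarity, measurability) recorded at the start of Section~\ref{sect5}. Your write-up simply makes this explicit, including the passage from uniform splittability of the family to splittability of the single member $\hat X$.
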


Our main results (Theorem \ref{theorem1} and Corollaries \ref{1.6}
and \ref{1.7}) are formulated for compactly supported continuous
functions $ \phi : \R^d \to \R $. More generally, we may try compactly
supported bounded Borel measurable functions $ \phi : \R^d \to \R $.
We introduce the space $ \BB(d) $ of all such functions, and
its subset $ \G(d) $ of all functions $ \phi \in \BB(d) $ such that the
claim of Theorem \ref{theorem1} holds for $\phi$. If
$ \phi\in\G(d) $, then the shifted function $ t \mapsto \phi(t+s) $
belongs to $ \G(d) $ (for every $ s \in \R^d $) due to stationarity.
If $ \phi \in \G(d) $ and $ c \in \R $, then $ c\phi \in \G(d) $,
since $ cX $ is splittable whenever $X$ is. Also, invariance of
$\G(d)$ under permutations of coordinates follows from such invariance
of splittability.

By a box (in $\R^d$) we mean here a set of the form $
[\al_1,\be_1) \times \dots \times [\al_d,\be_d) $ for
$ \al_1<\be_1, \dots, \al_d<\be_d $.

\begin{proposition}\label{5.5}
For every box $ B \subset \R^d $, its indicator function $ \One_B $
belongs to $ \G(d) $.
\end{proposition}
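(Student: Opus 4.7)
The plan is to reduce Proposition \ref{5.5} to the linear response theorem for splittable CMS random fields proved in \cite{II} and \cite{III}, using the passage $X \mapsto \hat X$ established in Corollary \ref{5.4}(b).

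First I would observe that, for $\phi = \One_B$, the integral $ \int_{\R^d} \phi(t/r) X_t \, \D t $ is exactly $ \int_{rB} X_t \, \D t $, which in the notation of \cite{II}, \cite{III} coincides with $ \int_{rB} \hat X_t \, \D t $; and that $\|\One_B\|^2 = \vol(B)$. Writing $ B = [\al_1,\be_1) \times \dots \times [\al_d,\be_d) $ and $ \ell_i = \be_i - \al_i $, stationarity of $X$ reduces the distribution of this integral to that of the integral of $\hat X$ over the origin-aligned box of side lengths $r\ell_1, \dots, r\ell_d$ (each tending to $\infty$), whose volume is $r^d \vol(B)$.

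Next I would invoke the linear response theorem from \cite{II}, \cite{III}: since $\hat X$ is a splittable CMS random field by Corollary \ref{5.4}(b), there is a constant $\sigma_X \in [0,\infty)$ (which is \emph{defined} by this appeal, and will serve thereafter as the $\sigma_X$ of Theorem \ref{theorem1}) such that, in the appropriate asymptotic regime on box side lengths and $\la$,
\[
\log \Ex \exp \la \int_{rB} X_t \, \D t \;\sim\; \tfrac12 \,\vol(rB)\, \sigma_X^2 \la^2 \;=\; \tfrac12\, r^d \vol(B) \sigma_X^2 \la^2.
\]
Dividing by $ r^d \la^2 $ yields the required limit $ \tfrac12 \|\One_B\|^2 \sigma_X^2 $.

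The only real thing to check is compatibility of regimes: the prior theorem is phrased in terms of side lengths $r_1,\dots,r_d$ and a constraint of the form $\la \log^d(r_1\cdots r_d) \to 0$ (compare Corollary \ref{4.11}), whereas Theorem \ref{theorem1} uses $\la \log^d r \to 0$. Since all side lengths $r\ell_i$ are proportional to $r$ with fixed constants, $\log(r_1\cdots r_d) = d \log r + O(1)$, so the two conditions are equivalent up to absorbing a bounded factor. I expect the matching of these two regimes (rather than any hard probability) to be the only nontrivial point; everything else is a direct translation between the LFSBM picture used here and the box-indexed picture of \cite{II}, \cite{III}.
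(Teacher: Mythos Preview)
Your proposal is correct and follows essentially the same route as the paper: invoke Corollary \ref{5.4}(b) to pass to the CMS field $\hat X$, apply the linear response theorem \cite[Theorem 1.1]{III} to get the limit $\sigma^2/2$ for boxes with all side lengths tending to infinity, then specialize to side lengths $r\ell_1,\dots,r\ell_d$ and use stationarity. The paper handles the regime compatibility exactly as you anticipate, by noting that with $r_i = r\ell_i$ the condition $\la\log^d(r_1\cdots r_d)\to 0$ is equivalent to $\la\log^d r\to 0$.
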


\begin{proof} \let\qed\relax
Corollary \ref{5.4}(b) in combination with \cite[Theorem 1.1]{III}
gives
\[
\frac1{r_1\dots
r_d \la^2} \log \Ex \exp \la \int_{[0,r_1]\times\dots\times[0,r_d]} \hat
X_t \, \D t \to \frac{\si^2}2
\]
as $ r_1,\dots,r_d \to \infty $, $ \la \to 0 $, $ \la \log^d(r_1\dots
r_d) \to 0 $. In particular, for arbitrary $ r_1,\dots,r_d > 0 $,
\[
\frac1{(r_1 r)\dots(r_d r) \la^2} \log \Ex \exp \la \int_{[0,r_1
r)\times\dots\times[0,r_d r)} X_t \, \D t \to \frac{\si^2}2
\]
as $ r \to \infty $, $ \la \to 0 $, $ \la \log^d r \to 0 $. Thus
(using stationarity),
\[
\frac1{r^d\cdot\vol B\cdot\la^2} \log \Ex \exp \la \int_{rB}
X_t \, \D t \to \frac{\si^2}2
\]
as $ r \to \infty $, $ \la \to 0 $, $ \la \log^d r \to 0 $. That is,
for $ \phi = \One_B $ and $ \si_X = \si $,
\[
\qquad \frac1{r^d \la^2} \log \Ex \exp \la \int_{\R^d} \phi \Big( \frac1r
t \Big) X_t \, \D t \to \frac{\si_X^2}2 \int_{\R^d} \phi^2(t) \, \D
t \, . \qquad\qquad\qquad\rlap{$\qedsymbol$}
\]
\end{proof}

\begin{proposition}\label{5.6}
Let $ \phi \in \BB(d) $, and the two functions
$ \phi \cdot \One_{(-\infty,0)\times\R^{d-1}} $,
$ \phi \cdot \One_{[0,\infty)\times\R^{d-1}} $ belong to $
\G(d) $. Then $ \phi \in \G(d) $.\,\footnote{%
 See also \cite[Lemma 2.14]{2008}.}
\end{proposition}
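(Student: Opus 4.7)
The plan is to split the integral $A_r := \int \phi(t/r) X_t \, \D t$ along the hyperplane $\{t_1=0\}$. Setting $\phi^- := \phi \cdot \One_{(-\infty,0)\times\R^{d-1}}$ and $\phi^+ := \phi \cdot \One_{[0,\infty)\times\R^{d-1}}$, I invoke Def.~\ref{1.3}(c) with $k=d$, $i=1$, $r=0$ to obtain a split $(X^0,X^-,X^+)$ of $X$ whose leak $\ti Y$ becomes, after the coordinate reordering prescribed there, a \splittable{(C,d-1)} random field $Y$ on $\R^d$. Write $A_r^\pm := \int \phi^\pm(t/r) X_t \, \D t$, $A_r^0 := \int \phi(t/r) X^0_t \, \D t \sim A_r$, and $B_r := \int \phi(t/r) \ti Y_t \, \D t$. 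The defining property of the leak (Def.~\ref{1.2}) immediately gives
\[
A_r^0 + B_r = \int \phi^-(t/r) X^-_t \, \D t + \int \phi^+(t/r) X^+_t \, \D t,
\]
and since $X^-,X^+$ are independent copies of $X$, the right-hand side is distributed as a sum of two independent copies of $A_r^\pm$. Hence $\log \Ex \exp \la(A_r^0 + B_r) = \log \Ex \exp(\la A_r^-) + \log \Ex \exp(\la A_r^+)$.

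Next I apply Lemma~\ref{4.1ab} in both directions to the pair $(\la A_r^0, \la B_r)$ (with a free parameter $p>1$), using $A_r^0 \sim A_r$. Rearranging turns the identity above into matching upper and lower bounds on $\log \Ex \exp(\mu A_r)$ in terms of $\log \Ex \exp$ of $A_r^\pm$ (controlled by the hypothesis $\phi^\pm \in \G(d)$) and of $B_r$ (the leak error). The main obstacle is then the leak estimate: I must show $\log \Ex \exp(\nu B_r) = O(\nu^2 r^{d-1})$ uniformly for $\nu$ in a range covering the values $\tfrac{p\la}{p-1}$ relevant in our regime.

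For this estimate, the structural content of Def.~\ref{1.3}(c) is decisive. Set $\psi(t_1,\dots,t_d) := \phi(t_d/r, t_1/r, \dots, t_{d-1}/r)$; Prop.~\ref{2.1} together with Cor.~\ref{2.3} yields that $\psi Y$ is \splittable{(CM,d-1)} on $\R^d$ with $M := \|\phi\|_\infty$. Cor.~\ref{2.12} then gives that $Z := (\psi Y)^{(1)}$ is \splittable{CM} on $\R^{d-1}$, and a change of variables identifies $B_r = \int_{\R^{d-1}} Z_u \, \D u$. Since $\phi$ is compactly supported, $Z$ is supported in a cube of side $O(r)$; shifting via Cor.~\ref{2.9} into the standard orthant and applying Cor.~\ref{4.11} in dimension $d-1$ produces $\log \Ex \exp(\nu B_r) \le C'(CM)^2 \nu^2 r^{d-1}$ whenever $|\nu| \le c/\log^{d-1} r$, which amply covers the relevant range.

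Dividing both H\"older bounds by $r^d \mu^2$ and letting $r \to \infty$, $\mu \to 0$, $\mu \log^d r \to 0$, the $A_r^\pm$ contributions converge by hypothesis (applied with $p\mu$ or $\mu/p$ in place of $\la$) to $pL_\pm$ and $L_\pm/p$ respectively, where $L_\pm := \tfrac12 \|\phi^\pm\|^2 \si_X^2$, while the leak contribution is $O(1/r) \to 0$. The disjoint-support identity $\|\phi\|^2 = \|\phi^-\|^2 + \|\phi^+\|^2$ then gives $\limsup \le pL$ and $\liminf \ge L/p$ with $L := \tfrac12 \|\phi\|^2 \si_X^2$; sending $p \to 1^+$ yields $\phi \in \G(d)$.
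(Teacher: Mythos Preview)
Your proof is correct and follows essentially the same route as the paper: split $X$ along $\{t_1=0\}$ via Def.~\ref{1.3}(c), use Lemma~\ref{4.1ab} (H\"older) to separate the main term from the leak, bound the leak by Corollary~\ref{4.11} in dimension $d-1$ after reducing via Corollary~\ref{2.12}, and send $p\to1$. The only organizational difference is that the paper packages the split-plus-H\"older step as Lemma~5.7 (itself relying on Lemma~\ref{4.1a} and Prop.~\ref{2.13}, which apply the split to the already-multiplied field $\phi(\cdot/r)X$), whereas you apply the split directly to $X$ and multiply the leak by $\phi(\cdot/r)$ afterward --- by Lemma~\ref{2.2}(c) these are equivalent.
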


Given $ \phi \in \BB(d) $ and a stationary splittable random field $X$
on $\R^d$, we introduce for convenience $ I(r)
= \int_{\R^d} \phi \( \frac1r t \) X_t \, \D t $ and similarly $
I_-(r) = \int_{(-\infty,0)\times\R^{d-1}} \phi \( \frac1r t \)
X_t \, \D t $, $ I_+(r)
= \int_{[0,\infty)\times\R^{d-1}} \phi \( \frac1r t \) X_t \, \D t $.

\begin{lemma}\label{5.7}
\begin{multline*}
\limsup_\myatop{ r\to\infty, \la\to0 }{ \la\log^d r \to 0 } \frac1{ r^d \la^2}
 \log \Ex \exp \la I(r) \le \\
\le \limsup_\myatop{ r\to\infty, \la\to0 }{ \la\log^d r \to 0 } \frac1{ r^d \la^2}
  \log \Ex \exp \la I_-(r)
 + \limsup_\myatop{ r\to\infty, \la\to0 }{ \la\log^d r \to 0 } \frac1{ r^d \la^2}
  \log \Ex \exp \la I_+(r) \, .
\end{multline*}
\end{lemma}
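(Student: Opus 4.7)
The plan is to mimic Prop.~\ref{2.13} and Lemma~\ref{4.1a}: use a split of $X$ along the hyperplane $\{t_1=0\}$ to replace $I(r)$ by a sum of two independent pieces, modulo a small ``leak'' correction. Fix a split $(X^0,X^-,X^+)$ of $X$ with leak $\ti Y$, and set
\[
\begin{aligned}
U_r &= \int_{(-\infty,0)\times\R^{d-1}} \phi(t/r) \, X^-_t \, \D t, \\
V_r &= \int_{[0,\infty)\times\R^{d-1}} \phi(t/r) \, X^+_t \, \D t, \\
W_r &= \int_{\R^d} \phi(t/r) \, X^0_t \, \D t, \\
Z_r &= \int_{\R^d} \phi(t/r) \, \ti Y_t \, \D t.
\end{aligned}
\]
Def.~\ref{1.2} gives the pointwise identity $U_r+V_r=W_r+Z_r$; independence of $X^-,X^+$ makes $U_r,V_r$ independent; and the equidistribution of $X,X^0,X^-,X^+$ gives $W_r \sim I(r)$, $U_r\sim I_-(r)$, $V_r\sim I_+(r)$.

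Applying the upper bound of Lemma~\ref{4.1ab} to the decomposition $\la W_r = \la(U_r+V_r) + (-\la Z_r)$ and factoring the first expectation by independence of $U_r,V_r$ gives, for every $p>1$,
\[
\log \Ex \exp \la W_r \le \tfrac1p \log \Ex \exp p\la U_r + \tfrac1p \log \Ex \exp p\la V_r + \tfrac{p-1}p \log \Ex \exp\( -\tfrac{p\la}{p-1} Z_r \).
\]
Dividing by $r^d\la^2$ and substituting $\mu=p\la$ converts the first two terms into $p\cdot(r^d\mu^2)^{-1}\log\Ex\exp\mu I_\pm(r)$; since $\mu\to 0$ and $\mu\log^d r\to 0$ whenever $\la$ does, their $\limsup$ is precisely $p(A_-+A_+)$, where $A_\pm$ denote the two right-hand limsups in the statement. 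Since the left-hand limsup is independent of $p$ and each $A_\pm\ge 0$ by Jensen, sending $p\to 1^+$ at the end will yield the claim---\emph{provided} the third term is shown to vanish.

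The main obstacle is therefore a uniform-in-$r$ bound on the cumulant generating function of $Z_r$. For $d=1$, the $k=1$ clause of Def.~\ref{1.3}(c) applied to $\tfrac1C X$ gives $\Ex\exp\eps\int_\R|\ti Y_t|\,\D t\le 2$ for some $\eps>0$; combined with boundedness of $\phi$ and \eqref{***} this yields $\log\Ex\exp\eta Z_r = O(\eta^2)$ uniformly in $r$, and the third term becomes $O(1/r)$. For $d\ge 2$ the required sharper estimate $\log\Ex\exp\eta Z_r = O(r^{d-1}\eta^2)$, valid for $|\eta|$ up to order $\log^{-(d-1)}r$, should follow by combining Lemma~\ref{2.12b} and Corollary~\ref{2.12a}: choose $R$ with $\phi$ vanishing outside $[-R,R]^d$; then $\ti Y$ multiplied by the bounded function $\phi(\cdot/r)$ and integrated in the leak direction $t_1$ yields a splittable random field on $\R^{d-1}$ whose integral over $[-rR,rR]^{d-1}$ equals $Z_r$, and Corollary~\ref{4.11} in dimension $d-1$ delivers the estimate, again making the third term $O(1/r)$. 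This completes the argument.
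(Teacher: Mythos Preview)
Your argument is essentially the paper's proof: there one applies Lemma~\ref{4.1a} (upper bound) directly to the \splittable{1} field $\tfrac1C\phi(\cdot/r)X$ over $[-ra,ra)\times[-ra,ra)^{d-1}$ and then invokes Corollary~\ref{4.11} in dimension $d-1$, which is exactly your split\,+\,H\"older decomposition with the leak term handled via the induced splittable field on $\R^{d-1}$, followed by $p\to1$. One small imprecision: Lemma~\ref{2.12b} and Remark~\ref{2.12c} as stated only cover indicator weights in the $t_1$-integral, not a weight $\phi(\cdot/r)$ depending on all coordinates; what you actually need is Prop.~\ref{2.1} (to multiply the permuted $(1,d-1)$-splittable leak by the bounded $\phi(\cdot/r)$) followed by Corollary~\ref{2.12} --- which is precisely what Prop.~\ref{2.13}, and hence Lemma~\ref{4.1a}, already package for you.
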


\begin{proof}
We take $ C\in(0,\infty) $ such that $ \phi X $ is \splittable{C}
by Corollary \ref{2.3}, and $ a\in(0,\infty) $ such that
$ \phi(\cdot)=0 $ outside $ [-a,a)^d $. We apply Lemma \ref{4.1a} (the
upper bound) to
the \splittable{1} random field $ \( \frac1C \phi\(\frac1r t\)
X_t\)_{t\in\R^d} $ and the box $ B = [-ra,ra)^{d-1} $ (not of the form
$ [0,r_2)\times\dots[0,r_d) $, which is harmless due to shift
invariance of splittability); we get
\begin{multline*}
\log \Ex \exp \frac\la{C} I(r)
\le \frac1p \log \Ex \exp \frac{p\la}{C} I_-(r) +
 \frac1p \log \Ex \exp \frac{p\la}{C} I_+(r) + \\
+ \frac{p-1}p f \Big(\! -\frac{p}{p-1} \la
 (2ra)^{(d-1)/2}; \underbrace{2ra,\dots,2ra}_{d-1} \Big) \, .
\end{multline*}
By Corollary \ref{4.11}, the last term (in the right-hand side) does
not exceed $ \frac{p-1}p C_{d-1} \( \frac{p}{p-1}\la(2ra)^{(d-1)/2}\)^2 $ provided
that $ C_{d-1} \big| \frac{p}{p-1}\la(2ra)^{(d-1)/2} \big| \le \break
(2ra)^{(d-1)/2} \log^{-d} (2ra)^{d-1} $ and $ 2ra \ge C_{d-1} $. When
$r$ is large enough and $ |\la| \log^d r $ is small enough, we get
$ \cO(r^{d-1}\la^2) $ for that last term, and therefore
\begin{multline*}
\frac1{r^d\la^2} \log \Ex \exp \frac\la{C} I(r) \le \\
\le \frac1p \cdot \frac1{r^d\la^2} \log \Ex \exp \frac{p\la}{C} I_-(r) +
 \frac1p \cdot \frac1{r^d\la^2} \log \Ex \exp \frac{p\la}{C} I_+(r)
 + \cO\Big(\frac1r\Big) \, ,
\end{multline*}
that is (substituting $ C\la $ for $\la$ and multiplying by $ C^2 $)
\begin{multline*}
\frac1{r^d\la^2} \log \Ex \exp \la I(r) \le \\
\le \frac1p \cdot \frac1{r^d\la^2} \log \Ex \exp p\la I_-(r) +
 \frac1p \cdot \frac1{r^d\la^2} \log \Ex \exp p\la I_+(r)
 + \cO\Big(\frac1r\Big) \, .
\end{multline*}
Taking into account that (substituting $ \frac\la{p} $ for $ \la $)
\[
\frac1p \limsup_\myatop{ r\to\infty, \la\to0 }{ \la\log^d r \to 0
} \frac1{ r^d \la^2} \log \Ex \exp p\la I_-(r) = p \limsup_\myatop{
r\to\infty, \la\to0 }{ \la\log^d r \to 0 } \frac1{
r^d \la^2} \log \Ex \exp \la I_-(r)
\]
and the same for $ I_+ $, we get
\begin{multline*}
\limsup_\myatop{ r\to\infty, \la\to0 }{ \la\log^d r \to 0 } \frac1{ r^d \la^2}
 \log \Ex \exp \la I(r) \le \\
\le p \limsup_\myatop{ r\to\infty, \la\to0 }{ \la\log^d r \to 0 } \frac1{ r^d \la^2}
 \log \Ex \exp \la I_-(r) +
p \limsup_\myatop{ r\to\infty, \la\to0 }{ \la\log^d r \to 0 } \frac1{ r^d \la^2}
 \log \Ex \exp \la I_+(r)
\end{multline*}
for all $ p>1 $, therefore, also for $ p=1 $.
\end{proof}

\begin{proof}[Proof of Prop.~\ref{5.6}]
Lemma \ref{5.7} gives the upper bound
\begin{multline*}
\limsup_\myatop{ r\to\infty, \la\to0 }{ \la\log^d r \to 0 } \frac1{ r^d \la^2}
 \log \Ex \exp \la I(r) \le \\
\le \frac{\si_X^2}2 \int_{(-\infty,0)\times\R^{d-1}} \phi^2(t) \, \D t +
\frac{\si_X^2}2 \int_{[0,\infty)\times\R^{d-1}} \phi^2(t) \, \D t =
\frac{\si_X^2}2 \int_{R^d} \phi^2(t) \, \D t \, .
\end{multline*}
The lower bound
\begin{multline*}
\liminf_\myatop{ r\to\infty, \la\to0 }{ \la\log^d r \to 0 } \frac1{ r^d \la^2}
 \log \Ex \exp \la I(r) \ge \\
\ge \liminf_\myatop{ r\to\infty, \la\to0 }{ \la\log^d r \to 0 } \frac1{ r^d \la^2}
 \log \Ex \exp \la I_-(r) +
 \liminf_\myatop{ r\to\infty, \la\to0 }{ \la\log^d r \to 0 } \frac1{ r^d \la^2}
  \log \Ex \exp \la I_+(r) = \\
= \frac{\si_X^2}2 \int_{(-\infty,0)\times\R^{d-1}} \phi^2(t) \, \D t +
\frac{\si_X^2}2 \int_{[0,\infty)\times\R^{d-1}} \phi^2(t) \, \D t =
\frac{\si_X^2}2 \int_{R^d} \phi^2(t) \, \D t
\end{multline*}
is obtained similarly (via the lower bound in Lemma \ref{4.1a}).
\end{proof}

A linear combination of (finitely many) indicators of boxes will be
called a step function.

\begin{lemma}\label{5.8}
Every step function belongs to $ \G(d) $.
\end{lemma}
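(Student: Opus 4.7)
The plan is to reduce Lemma~\ref{5.8} to Propositions~\ref{5.5} and~\ref{5.6} by induction on the combinatorial complexity of the step function, using the shift, permutation and scalar invariances of $\G(d)$ as bridges.

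I would first upgrade Proposition~\ref{5.6} to an arbitrary coordinate hyperplane $\{t_k = r\}$ in place of $\{t_1 = 0\}$. Given $\phi \in \BB(d)$ such that both $\phi \cdot \One_{\{t_k < r\}}$ and $\phi \cdot \One_{\{t_k \ge r\}}$ belong to $\G(d)$, applying first the shift $t \mapsto t - s$ (with $s_k = r$ and $s_j = 0$ otherwise) and then the coordinate transposition swapping $1$ and $k$ moves the hyperplane to $\{t_1 = 0\}$; under these two transformations the two restrictions of $\phi$ are carried into the two restrictions of the transformed function, and all three functions stay in $\G(d)$ because $\G(d)$ is closed under shifts and coordinate permutations (as noted in the paragraph preceding Proposition~\ref{5.5}). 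Proposition~\ref{5.6} applied to the transformed function then returns $\phi \in \G(d)$ after undoing the transformations.

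Next I would place $\phi$ in canonical form. Writing $\phi = \sum_i c_i \One_{B_i}$, the endpoints of the $B_i$ in each coordinate direction $k$ form a finite set of size $m_k$; taking products across directions refines $\phi$ to $\sum_j c_j' \One_{G_j}$ with pairwise disjoint grid cells $G_j$. I would induct on $M(\phi) := \sum_{k=1}^d m_k$. For the base case $M(\phi) = 2d$, each $m_k = 2$, so there is a single grid cell $B$ and $\phi = c \One_B$, which lies in $\G(d)$ by Proposition~\ref{5.5} and closure of $\G(d)$ under scalar multiplication. For the inductive step, if $M(\phi) > 2d$ then some $m_k \ge 3$, so there is an interior grid value $r$ in direction $k$; splitting $\phi$ at $\{t_k = r\}$ yields two step functions whose grids in direction $k$ have strictly fewer values (and whose grids in the other directions do not grow), so $M$ strictly decreases on both sides. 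The induction hypothesis places both pieces in $\G(d)$, and the extended Proposition~\ref{5.6} places $\phi$ in $\G(d)$.

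The single nontrivial step I expect is the upgrade of Proposition~\ref{5.6} to general coordinate hyperplanes; everything else is a combinatorial induction whose decreasing quantity is obvious once the canonical grid has been introduced.
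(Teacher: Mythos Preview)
Your proposal is correct and follows essentially the same approach as the paper: both upgrade Proposition~\ref{5.6} to arbitrary coordinate hyperplanes via the shift and permutation invariance of $\G(d)$, then build up general step functions by iterated splitting from the single-box case (Proposition~\ref{5.5}). The paper compresses the combinatorial induction into the phrase ``Continuing this way, every step function can be obtained in finitely many steps,'' whereas you make the induction explicit with the decreasing counter $M(\phi)=\sum_k m_k$; this is a harmless difference in presentation, not in substance.
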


\begin{proof}
A linear combination of indicators of two disjoint boxes belongs to
$\G(d)$ by Propositions \ref{5.5} and \ref{5.6}. (Prop.~\ref{5.6} is
formulated for the hyperplane $ \{0\}\times\R^{d-1} $, but holds for
every $ \R^{k-1}\times\{s\}\times\R^{d-k} $ due to shift and
permutation invariance of $\G(d)$.) Continuing this way, every step
function can be obtained in finitely many steps.\footnote{%
 No pun intended\dots}
\end{proof}

\begin{lemma}\label{5.9}
Let $ \phi, \phi_1, \phi_2, \dots \in \BB(D) $, $ \sup_{t\in\R^d}
| \phi_n(t) - \phi(t) | \to 0 $ as $ n \to \infty $, and $ \phi_n $
be uniformly compactly supported (that is, they all vanish outside a
single bounded set). If $ \phi_1, \phi_2, \dots \in \G(d) $, then
$ \phi \in \G(d) $.\,\footnote{%
 See also \cite[Lemma 2.17]{2008}.}
\end{lemma}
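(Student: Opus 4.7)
The plan is to decompose $I(r) = \int_{\R^d} \phi(t/r) X_t \, \D t$ as $I_n(r) + E_n(r)$, where $I_n(r) = \int_{\R^d} \phi_n(t/r) X_t \, \D t$ is controlled by the hypothesis $\phi_n \in \G(d)$ and $E_n(r) = \int_{\R^d} (\phi - \phi_n)(t/r) X_t \, \D t$ is a small error. I would then sandwich $\log \Ex \exp \la I(r)$ using the two halves of Lemma \ref{4.1ab} (applied with $X = \la I_n(r)$, $Y = \la E_n(r)$), take $\limsup$ and $\liminf$ in the regime $r \to \infty$, $\la \to 0$, $\la \log^d r \to 0$, then send $n \to \infty$, and finally let $p \downarrow 1^+$ to pinch both bounds to $\tfrac12 \|\phi\|^2 \si_X^2$.

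The technical heart is a uniform quadratic bound on $\log \Ex \exp \mu E_n(r)$ for $\mu$ of order $\la$. Setting $\eps_n = \sup_t |\phi_n(t) - \phi(t)| \to 0$ and picking $a$ with $\phi, \phi_1, \phi_2, \dots$ all supported in $[-a, a]^d$, the random field $t \mapsto (\phi - \phi_n)(t/r) X_t$ is \splittable{C_0 \eps_n} by Corollary \ref{2.3} (where $X$ is \splittable{C_0}) and is supported in $[-ra, ra]^d$. Shifting it into $[0, 2ra)^d$ by Corollary \ref{2.9} and comparing with the function $f$ from \eqref{4.1} gives
\[
\log \Ex \exp \mu E_n(r) \le f\bigl(\mu C_0 \eps_n (2ra)^{d/2};\, 2ra,\dots,2ra\bigr),
\]
and Corollary \ref{4.11} then bounds this by $C_d C_0^2 (2a)^d \eps_n^2 \mu^2 r^d$ whenever $C_d C_0 \eps_n |\mu| \le (d\log(2ra))^{-d}$. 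Since $|\la|\log^d r \to 0$ in the limiting regime, this condition is automatic (uniformly in $n$ once $\eps_n \le 1$) for every $\mu$ that is a bounded multiple of $\la$.

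Plugging into the upper half of Lemma \ref{4.1ab}, dividing by $r^d \la^2$, and using $\phi_n \in \G(d)$ for the $I_n$-contribution, I would obtain $\limsup \frac1{r^d\la^2} \log \Ex \exp \la I(r) \le p \cdot \tfrac12 \|\phi_n\|^2 \si_X^2 + \cO\bigl(\tfrac{p}{p-1}\eps_n^2\bigr)$; letting $n \to \infty$ (so $\|\phi_n\|^2 \to \|\phi\|^2$ by uniform convergence on common compact support) and then $p \downarrow 1$ gives $\limsup \le \tfrac12 \|\phi\|^2 \si_X^2$. The matching $\liminf$-bound follows symmetrically from the lower half of Lemma \ref{4.1ab}. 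The main obstacle is securing the uniformity of the error estimate above: the implicit constant in $\cO(\eps_n^2)$ must be independent of $r$, $\la$, $n$ (depending only on $d$, $a$, $C_0$, $p$) so that it genuinely vanishes as $n \to \infty$ \emph{after} $r,\la$ have been sent to their respective limits. This is precisely what Corollary \ref{4.11} provides, and once it is in place, the double limit mirrors the H\"older-sandwich argument already used in Lemma \ref{5.7} and Proposition \ref{5.6}.
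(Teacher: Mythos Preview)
Your proposal is correct and follows essentially the same approach as the paper: both arguments sandwich $\log \Ex \exp \la I(r)$ between expressions involving $I_n(r)$ and the error $I(r)-I_n(r)$ via Lemma~\ref{4.1ab}, bound the error contribution using Corollary~\ref{4.11} (after noting that $(\phi-\phi_n)X$ has a small splittability constant), and then pass to the limit. The only cosmetic difference is bookkeeping: the paper couples the H\"older exponent to the uniform bound by setting $p=\tfrac{1}{1-\eps}$ (so that $\tfrac1\eps(\phi-\phi_n)$ is bounded by~$1$ and the error term is simply $\eps\cdot C_d(2a)^d$), and then sends $n\to\infty$ followed by $\eps\to0+$; you instead keep $p$ and $n$ decoupled, obtain an error of order $\tfrac{p}{p-1}\eps_n^2$, and send $n\to\infty$ then $p\downarrow1$. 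Both orderings of limits work for the same reason.
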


The proof is somewhat similar to that of Lemma \ref{5.7}.

\begin{proof}
Given a splittable $X$, for arbitrary $ r \in (0,\infty) $ we consider
$ I(r) = \int_{\R^d} \phi \( \frac1r t \) X_t \, \D t $ and
$ I_n(r) = \int_{\R^d} \phi_n \( \frac1r t \) X_t \, \D t $.
WLOG, $X$ is \splittable{1} (otherwise, for a \splittable{C} $X$, turn
to $ C\phi $, $ C\phi_n $ and $ \frac1C X $).
We take $ a\in(0,\infty) $ such that $ \forall n \; \phi_n(\cdot)=0 $
outside $ [-a,a)^d $. Given $ \eps>0 $, we take $n_\eps$ such that
$ \forall t \; |\phi_n(t)-\phi(t)| \le \eps $ whenever $
n \ge n_\eps $. Lemma \ref{4.1ab} applied to $ p = \frac1{1-\eps} $
and random variables $ \la I_n(r) $ and $ \la \(I(r)-I_n(r)\) $ for
arbitrary $ \la\in\R $ gives
\begin{multline*}
\frac1{1-\eps} \log \Ex \exp (1-\eps) \la I_n(r)
- \frac\eps{1-\eps} \log \Ex \exp \Big(\!
 -\frac{1-\eps}\eps \la \(I(r)-I_n(r)\) \Big) \le \\
\le \log \Ex \exp \la I(r) \le (1-\eps) \log \Ex \exp \frac\la{1-\eps}
 I_n(r) + \eps \log \Ex \exp \frac\la\eps \( I(r)-I_n(r) \) \, . 
\end{multline*}
By Corollary \ref{4.11} (and \ref{2.3}),
\begin{multline*}
\!\!\!\!\!\! \log \Ex \exp \frac\la\eps \( I(r)-I_n(r) \)
= \log \Ex \exp \la \int_{[-ra,ra)^d} \frac1\eps \bigg( \phi \Big( \frac1r
t \Big) - \phi_n \Big( \frac1r t \Big) \bigg) X_t \, \D t \le \\
\le f \( (2ra)^{d/2} \la; 2ra,\dots,2ra \) \le C_d (2ra)^d \la^2
\end{multline*}
whenever $ n \ge n_\eps $, $ C_d | (2ra)^{d/2} \la | \le
(2ra)^{d/2} \log^{-d} (2ra)^d $ and $ 2ra \ge C_d $. When $n$ and
$r$ are large enough and $ |\la| \log^d r $ is small enough, we have
\[
\frac1{r^d \la^2} \log \Ex \exp \frac\la\eps \( I(r)-I_n(r) \) \le C_d
(2a)^d \, ,
\]
and similarly,
\[
\frac1{r^d \la^2} \log \Ex \exp \Big(\! -\frac{1-\eps}\eps \la
\(I(r)-I_n(r)\) \Big) \le (1-\eps)^2 C_d (2a)^d \, .
\]
Taking into account that
\begin{multline*}
\lim_\myatop{ r\to\infty, \la\to0 }{ \la\log^d r \to 0 } \frac1{ r^d \la^2}
 \log \Ex \exp (1-\eps) \la I_n(r) = \\
\!\! = (1-\eps)^2 \!\! \lim_\myatop{ r\to\infty, \la\to0 }{ \la\log^d r \to
0 } \frac1{ r^d ((1-\eps)\la)^2}
 \log \Ex \exp (1-\eps) \la I_n(r) =
 (1-\eps)^2 \frac{\si_X^2}2 \int_{\R^d} \phi_n^2(t) \, \D t \, ,
\end{multline*}
and similarly
\[
\lim_\myatop{ r\to\infty, \la\to0 }{ \la\log^d r \to 0 } \frac1{ r^d \la^2}
\log \Ex \exp \frac\la{1-\eps} I_n(r) =
\frac1{(1-\eps)^2} \frac{\si_X^2}2 \int_{\R^d} \phi_n^2(t) \, \D t \, ,
\]
we have
\begin{multline*}
(1-\eps) \frac{\si_X^2}2 \int_{\R^d} \phi_n^2(t) \, \D t
 - \eps(1-\eps)C_d(2a)^d \le \\
\le \liminf_\myatop{ r\to\infty, \la\to0 }{ \la\log^d r \to 0 } \frac1{ r^d \la^2}
  \log \Ex \exp \la I(r)
 \le \limsup_\myatop{ r\to\infty, \la\to0 }{ \la\log^d r \to 0 } \frac1{ r^d \la^2}
  \log \Ex \exp \la I(r) \le \\
\le \frac1{1-\eps} \frac{\si_X^2}2 \int_{\R^d} \phi_n^2(t) \, \D t
 + \eps C_d(2a)^d
\end{multline*}
for all $ \eps>0 $ and $ n \ge n_\eps $. In the limit $ n\to\infty $,
$ \int_{\R^d} \phi_n^2(t) \, \D t $ turns into
$ \int_{\R^d} \phi^2(t) \, \D t $. Finally, take $ \eps\to0+ $.
\end{proof}

\begin{proof}[Proof of Theorem \ref{theorem1}]
Given a compactly supported continuous function $ \phi : \R^d \to \R
$, we take step functions $ \phi_n $ satisfying the conditions of
Lemma \ref{5.9}. By Lemma \ref{5.8}, each $ \phi_n $ belongs to
$ \G(d) $. By Lemma \ref{5.9}, $ \phi $ belongs to $ \G(d) $, which
means that the claim of Theorem \ref{theorem1} holds for $ \phi $.
\end{proof}

Corollaries \ref{1.6} and \ref{1.7} follow from Theorem \ref{theorem1}
in the same way as \cite[Corollaries 1.7, 1.8]{I} follow
from \cite[Theorem 1.6]{I}. (See also \cite[Sect.~6]{III}.)

\begin{remark}
More generally,
\[
\! \lim_\myatop{ r_1,\dots,r_d\to\infty, \la\to0 }{ \la\log^d
(r_1\dots r_d) \to 0 } \frac1{ r_1\dots r_d \la^2}
\log \Ex \!\exp \la \! \int_{\R^d} \phi \Big( \frac1{r_1}
t_1, \dots, \frac1{r_d} t_d \Big) X_t \, \D t
= \frac12 \|\phi\|^2 \si_X^2 \, ,
\]
where $ t = (t_1,\dots,t_d) \in \R^d $. The proof needs only trivial
modifications (starting from Prop.~\ref{5.5}).
\end{remark}

\bigskip
\filbreak
{
\small
\begin{sc}
\parindent=0pt\baselineskip=12pt
\parbox{4in}{
Boris Tsirelson\\
School of Mathematics\\
Tel Aviv University\\
Tel Aviv 69978, Israel
\smallskip
\par\quad\href{mailto:tsirel@post.tau.ac.il}{\tt
 mailto:tsirel@post.tau.ac.il}
\par\quad\href{http://www.tau.ac.il/~tsirel/}{\tt
 http://www.tau.ac.il/\textasciitilde tsirel/}
}

\end{sc}
}
\filbreak

\end{document}